\newcommand{\useexternalfile}[1]{%
  \tikzsetnextfilename{#1}%
  \tikzexternalenable
  \input{\tikzexternal@filenameprefix#1.tikz}
  \tikzexternaldisable
}\makeatother
\newcommand{\ext}{\operatorname{ext}}
\newcommand{\exclude}[1]{}
\newcommand{\ogamma}{\overline{\gamma}}
\newcommand{\ugamma}{\underline{\gamma}}
\newcommand{\oGamma}{\overline{\beta}}
\newcommand{\uGamma}{\underline{\beta}}
\newcommand{\alphep}{\alpha^{\epsilon}}
\newcommand*\samethanks[1][\value{footnote}]{\footnotemark[#1]}
\title{Strong Convex Nonlinear Relaxations of the Pooling Problem}
\date{\today}
\author{James Luedtke\thanks{Department of Industrial and Systems Engineering, University of Wisconsin-Madison, Madison, WI, USA    %(\email{linderot@cae.wisc.edu})
    } \and Claudia D'Ambrosio\thanks{LIX CNRS (UMR7161), \'{E}cole
    Polytechnique, 91128 Palaiseau Cedex, France
    %(\email{dambrosio@lix.polytechnique.fr})
    } \and Jeff Linderoth\samethanks[1]
    %(\email{jim.luedtke@wisc.edu})
    \and Jonas Schweiger\thanks{Zuse Institute Berlin (ZIB), Takustrasse 7,
      14195 Berlin, Germany, ORCID 0000-0002-4685-9748 %(\email{schweiger@zib.de})
    }
    }
\begin{document}

\maketitle

\begin{abstract}
 % \CD{upda the abstract at the end}
We investigate new convex relaxations for the pooling problem, a classic nonconvex
production planning problem in which input materials are mixed in
intermediate pools, with the outputs of these pools further mixed to
make output products meeting given attribute percentage requirements. 
Our relaxations are derived by considering a set which arises from the formulation by considering a single product, a
single attibute, and a single pool. The convex hull of the resulting nonconvex set is not polyhedral. We derive valid
linear and convex nonlinear inequalities for the convex hull, and demonstrate that different subsets of these
inequalities define the convex hull of the nonconvex set in three cases determined by the parameters of the set.
Computational results on literature instances and newly created larger test instances demonstrate that the inequalities
can significantly strengthen the convex relaxation of the $pq$-formulation
of the pooling problem, which is the relaxation known to have the strongest bound.
\end{abstract}

\section{Introduction}\label{sec:introduction}
The pooling problem is a classic \nonconvex nonlinear problem
introduced by Haverly in 1978 \cite{haverly:78}.
The problem consists in routing flow through a feed forward network
from inputs through pools to output products. The material that is introduced
at inputs has known quality for certain attributes. The task is to find
a flow distribution that respects quality restrictions on the output products.
As is standard in the pooling problem, we assume linear blending, \ie the attributes at a node are mixed in
the same proportion as the incoming flows. As the quality of the
attributes in the pools is dependent on the decisions determining amount of flow from inputs to the pools, 
the resulting constraints include bilinear terms.

The aim of this work is to strengthen the relaxation of the strongest known formulation,
i.e., the so-called \pqformulation proposed in
\cite{Quesada_pooling_1995, tawarmalani.sahinidis:02-1}. By focusing on a single output product, a single attribute, and
a single pool, and aggregating variables, we derive a
structured \nonconvex 5-variable set that is a relaxation of the original feasible
set. The description of this set contains one bilinear term which
captures some of the \nonconvex essence of the problem. Valid convex inequalities for this set directly
translate to valid inequalities for the original pooling problem. We derive valid linear and nonlinear convex
inequalities for the set. For three cases determined by the parameters of the set, we demonstrate that a subset of these inequalities define the convex hull of the set.
Finally, we conduct an illustrative computational study that demonstrates that these inequalities can indeed improve the
relaxation quality over the \pqformulation, particularly on instances in which the underlying network is sparse, which
are precisely the instances in which the \pqformulation relaxation has the largest gap to the optimal value.  As part of
this study, we create and test the inequalities on new, larger test instances of the pooling problem. 

The reminder of the paper is organized as follows. We briefly review relevant literature on the pooling problem in the
remainder of this section. In
\cref{sec:classic_formulations}, we introduce the \pqformulation, and its classic relaxation 
based on the McCormick relaxation. Our set of interest, that
represents a relaxation of the pooling problem, is introduced in
\cref{sec:strong_convex_nonlinear_relaxations}. In the same section,
we present the valid inequalities for this set. We then prove in
\cref{sec:convex_hull_analysis} that 
certain subsets of the proposed inequalities define the convex hull of the set of interest, for three cases based
on the parameters of the set.
Computational results are presented in
\cref{sec:computational_results}, and concluding remarks are made in \cref{sec:conclusions}.

\subsection{Literature Review}
\label{sec:literature}

There are many applications of the pooling problem, including
petroleum refining, wastewater treatment, and general chemical
engineering process design \cite{B00,D89,K00,R95}. This is confirmed
by an interesting analysis performed by Ceccon et al.
\cite{ceccon_kouyialis_misener} whose method allows to recognize
pooling problem structures in general mixed integer nonlinear
programming problems.

Although the pooling problem has been studied for decades, it was only proved to be strongly NP-hard in 2013 by Alfaki and
Haugland \cite{alfaki.haugland:13}.
Further complexity results on special cases of the pooling problem
can be found in \cite{Baltean-Lugojan2017,Boland2017,Haugland2015}.

Haverly \cite{haverly:78} introduced the pooling problem using what is now known as the 
\pformulation. Almost 20 years later, Ben-Tal et al.
\cite{ben-tal.eiger.gershovitz:94} proposed an equivalent formulation
called \qformulation. Finally, the \pqformulation was introduced in
\cite{Quesada_pooling_1995, tawarmalani.sahinidis:02-1} and is a
strengthening of the \qformulation. It has been shown to be the
strongest known formulation for the pooling problem \cite{tawarmalani.sahinidis:02-1}; i.e., the ``natural
relaxation'' of this formulation \pqformulation yields a bound on the optimal value that is at least as good as that of any other known formulation. 

Many other approaches for solving the pooling problem have been proposed, including: recursive and successive linear
programming \cite{baker.lasdon:85,haverly:78}, decomposition methods
\cite{floudas.aggarwal:90}, and global optimization
\cite{foulds.haugland.jornsten:92}. More recently, 
Dey and Gupte \cite{DeyGupteOR2015} used discretization strategies to
design an approximation algorithm and an effective heuristic. 
Several variants of the standard pooling problem have been
studied, see, for example, \cite{audet.et.al:04,
  misener.floudas:09, Misener20101432, Ruiz2012}. Some of the variants
introduce binary variables to model design decisions, thus
yielding a mixed-integer nonlinear programming problem, see, for
example, \cite{dambrosio.linderoth.luedtke:11, meyer.floudas:06:pooling,
  misener_floudas_2010, Visweswaran2001}. For
more comprehensive reviews of the pooling problem the reader is referred to
\cite{gupte:12,Gupteetal_poolingproblemoverview,misener.floudas:09,tawarmalani.sahinidis:02-1} and to \cite{Gupte2017} for an overview on the relaxations and discretizations for the pooling problem.

{\bf Notation.} For a set $T$, $\conv\Of{T}$ denotes the convex hull of $T$, and for a convex set $R$, 
$\ext\Of{R}$ denotes the set of extreme points of $R$.

%%% Local Variables:
%%% mode: latex
%%% TeX-master: "paper"
%%% End:

\section{Mathematical Formulation and Relaxation}
\label{sec:classic_formulations}

There are multiple formulations for the pooling problem,
primarily differing in the modeling of the concentrations of
attributes throughout the network. We base our work on the
state-of-the-art \emph{\pqformulation}.

We are given a directed graph
$\graph = (\nodes, \arcs)$ where $\nodes$ is the set of vertices that
is partitioned into inputs
\inputs,  pools \pools, and outputs \outputs, \ie $\nodes = \inputs
\cup \pools \cup \outputs$.
For a node $\node\in\nodes$, the sets $\inputs[\node]\subseteq\inputs$, $\pools[\node]\subseteq\pools$,
$\outputs[\node]\subseteq\outputs$ denote the inputs, pools, and outputs, respectively, that are
directly connected to \node. Arcs $(i, j) \in \arcs$ link
inputs to pools, pools to outputs, and inputs directly to outputs, \ie
$\arcs \subseteq (\inputs\times\pools) \cup (\pools\times\outputs) \cup (\inputs\times\outputs)$. In
particular, pool-to-pool connections are not considered.

The \pqformulation of the pooling problem uses the following decision variables:
\begin{itemize}
\item $\flowOnArc{i}{j}$ is the flow on $(i,j)\in A$;
\item $\proportionVar{\inp}{\pool}$ is the proportion of flow to pool
  $\pool \in \pools$ that comes from input $\inp \in \inputs[\pool]$;
\item \flowpath is the flow from $\inp\in\inputs$
  through pool $\pool \in \pools[\inp]$ to output $\outp \in \outputs[\pool]$.
\end{itemize}
With these definitions, the \pqformulation of the
pooling problem is:
\begin{subequations}\label{eq:pooling-region}
\begin{align}
  \min \quad & \sum_{(i,j) \in \arcs} c_{ij} \flowOnArc{i}{j} \label{eq:pq_obj}\\
  % \min \quad & f\left(x, p, w\right) \label{eq:pq_obj}\\
  \st \quad  & \sum_{\pool \in \pools[\inp]} \flowOnArc{\inp}{\pool} +
               \sum_{\outp \in \outputs[\inp]} \flowOnArc{\inp}{\outp} \leq
               \capacity{\inp} %
             && \fa \inp \in \inputs \label{eq:pq_input-capacity}\\
             & \sum_{\outp \in \outputs[\pool]}
              \flowOnArc{\pool}{\outp} \leq \capacity{\pool}
             && \fa \pool\in\pools \label{eq:pq_pool-capacity}\\
             &\sum_{\pool \in \pools[\outp]} \flowOnArc{\pool}{\outp} +
               \sum_{\inp \in \inputs[\outp]} \flowOnArc{\inp}{\outp} \leq
               \capacity{\outp} %
             && \fa \outp \in \outputs \label{eq:pq_product-capacity}\\
             & \sum_{\inp \in \inputs[\pool]}
               \proportionVar{\inp}{\pool} = 1 &&\fa \pool
                            \in\pools \label{eq:pq_q-proportion}\\
 & \flowpath = \proportionVar{\inp}{\pool} \flowOnArc{\pool}{\outp}  && \fa \inp \in \inputs[\pool],\ \pool \in \pools[\outp],\ \outp
                                                                           \in \outputs\label{eq:pq_wdef}\\
             & \flowOnArc{\inp}{\pool} = \sum_{\outp \in \outputs[\pool]}
               \flowpath &&\fa \inp\in\inputs[\pool],\ \pool
                            \in\pools \label{eq:pq_flow-balance}\\
           & \sum_{\inp \in \inputs[\outp]} \ogammaIdx
               \flowOnArc{\inp}{\outp} + \sum_{\pool \in
               \pools[\outp]}\sum_{\inp \in \inputs[\pool]} \ogammaIdx
               \flowpath \leq 0 && \fa \outp\in\outputs,\
                                   \spec\in\specs \label{eq:pq_quality-upper}\\
  & \sum_{\inp \in \inputs[\pool]}
    \flowpath = \flowOnArc{\pool}{\outp} &&  \fa \outp\in\outputs[\pool],\ \pool \in\pools \label{eq:pq_pq-1}\\
                                   % \sum_{\inp \mid (\inp,\outp) \in \arcsIO} (\uGamma_{\outp\spec} - \lambda_{i\spec}) \flowOnArc{\inp}{\outp} + \sum_{\pool \mid (\pool,\outp) \in \arcsPO} \sum_{i \mid (\inp,\pool) \in \arcsIP} (\uGamma_{\outp\spec} - \lambda_{i\spec}) \flowpath &\leq 0  &\quad& \forall\ \spec \in \specs, \forall\ \outp \in \outputs \label{eq:pq_quality-lower}\\
    & \sum_{\outp \in \outputs[\pool]}
  \flowpath \leq \capacity{\pool}\proportionVar{\inp}{\pool} &&
  \fa \inp\in\inputs[\pool],\ \pool\in\pools \label{eq:pq_pq-2}\\
   & 0 \leq \flowOnArc{i}{j} \leq \capacity{ij} && \fa (i,j) \in \arcs \label{eq:pq_pq_xub}\\
             & 0\leq \proportionVar{\inp}{\pool} \leq 1 && \fa \inp\in\inputs[\pool],\
                                                     \pool\in\pools \label{eq:pq_qgeq0}.
%\flowOnArc{i}{j} &\geq 0 &\quad& \forall\ (i,j) \in \arcs \label{eq:pq_xgeq0}.
\end{align}
\end{subequations}
%

% where $\ogammaIdx = \lambda_{\spec\inp} - \oGamma_{\outp\spec}$, $\lambda_{\spec\inp}$ is the amount of
% attribute $\spec$ per unit of raw material $\inp$, and $\oGamma_{\outp\spec}$ is the upper bound of on the amount of attribute
% $\spec$ per unit of final product $\outp$.
The objective \cref{eq:pq_obj} is to minimize the production cost, where $c_{ij}$ is the cost per  unit flow on arc
$(i,j)$.
Inequalities
\cref{eq:pq_input-capacity,eq:pq_pool-capacity,eq:pq_product-capacity}
represent capacity constraints on inputs, pools, and outputs,
respectively, where here $\capacity{\inp}, \inp \in \inputs$, $\capacity{\pool},\pool\in\pools$, and $\capacity{\outp},
\outp \in \outputs$ are given capacity limits.  Equations \cref{eq:pq_q-proportion} enforce that the
proportions at each pool sum up to one. Equations
\cref{eq:pq_wdef,eq:pq_flow-balance} define the auxiliary variables
$\flowpath$ and link them to the flow variables.
\eqref{eq:pq_quality-upper} formulates the quality constraints for
each attribute $\spec$ in the set of attributes $\specs$. The
coefficients $\ogammaIdx$ represent the excess of attribute quality
$\spec$ of the material from input \inp{} with respect to the upper quality bound
at output $\outp$. The upper quality bound is met when there is no
excess, \ie the total excess is not positive. For brevity, we do not
include lower bounds on attribute quality at the final products, but
these can be easily added. Inequalities \cref{eq:pq_pq-2,eq:pq_pq-1}
are redundant in the formulation but are not when the \nonconvex
constraints \cref{eq:pq_wdef} are not enforced as is done in a
relaxation-based solution algorithm. These two constraints constitute
the difference between the $q$- and the \pqformulation and are
responsible for the strong linear relaxation of the latter.
Finally, \eqref{eq:pq_pq_xub} limits the flow on each arc $(i,j)$ to a given capacity $C_{ij}$.

%\subsection{McCormick relaxation}\label{sec:mccormick}
%
A linear programming relaxation of the \pqformulation is obtained by
replacing the constraints \eqref{eq:pq_wdef} with the McCormick
inequalities derived using the bounds \eqref{eq:pq_pq_xub} and
\eqref{eq:pq_qgeq0}:
\begin{subequations}
\label{eq:mcall}
\begin{alignat}{2}
 &\flowpath \leq \flowOnArc{\pool}{\outp}, \ \flowpath \leq
  \capacity{\pool\outp} \proportionVar{\inp}{\pool}, \  &\quad& \fa \inp \in \inputs[\pool],\ \pool \in \pools[\outp],\ \outp
                                                                           \in \outputs  \label{eq:mcall1} \\
 & \flowpath \geq 0, \
  \flowpath \geq \capacity{\pool\outp} \proportionVar{\inp}{\pool} +
  \flowOnArc{\pool}{\outp} - \capacity{\pool\outp}, &\quad& \fa \inp \in \inputs[\pool],\ \pool \in \pools[\outp],\ \outp
                                                                           \in \outputs . \label{eq:mcall2}
\end{alignat}
\end{subequations}
%It is known  \cite{alkhayyal.falk:83} that for each
%$(\inp,\pool,\outp)$ the McCormick inequalities
%define the convex hull of the set
%\[ M_{\inp \pool \outp} = \{(\proportionVar{\inp}{\pool},\flowOnArc{\pool}{\outp},\flowpath) \mid 0 \leq \proportionVar{\inp}{\pool} \leq 1, 0 \leq \flowOnArc{\pool}{\outp} \leq \capacity{\pool\outp}, \flowpath = \proportionVar{\inp}{\pool} \flowOnArc{\pool}{\outp} \}. \]
We refer to the relaxation obtained by replacing \eqref{eq:pq_wdef} with \eqref{eq:mcall} as the {\it McCormick
relaxation}.  Our goal is to derive tighter relaxations of the pooling problem by considering more of the problem
structure.

%%% Local Variables:
%%% mode: latex
%%% TeX-master: "paper"
%%% End:

\section{Strong Convex Nonlinear Relaxations}\label{sec:strong_convex_nonlinear_relaxations}
To derive a stronger relaxation of the pooling problem, we seek to identify a relaxed set that contains the feasible region of
the pooling problem, but includes some of the key \nonconvex structure. First, we consider only one single attribute
$\spec \in \specs$ and relax all constraints
\eqref{eq:pq_quality-upper} concerning the other attributes. Next, we
consider only one output $\outp \in \outputs$, and remove all nodes and
arcs which are not in a path to output $\outp$. In particular, this
involves all other outputs. Then, we focus on pool $\pool \in \pools$
with the intention to split flows into two categories: the flow
through pool $\pool$ and aggregated flow on all paths not passing
through pool $\pool$, also called ``by-pass'' flow. Finally, we
aggregate all the flow from the inputs to pool $\pool$.

As a result, we are left with five decision variables:
\begin{enumerate}
  \item the total flow through the pool, \ie the flow $x_{\pool \outp}$ from pool $\pool$ to output $\outp$
  \item the total flow $z_{\pool \outp}$ over the by-pass, \ie the
    flow to output $\outp$ that does not pass through pool $\pool$
       $$z_{\pool \outp} := \sum_{\inp \in \inputs[\outp]} \flowOnArc{\inp}{\outp} + \sum_{\pool' \in \pools[\outp] \mid \pool' \neq \pool}
  \flowOnArc{\pool'}{j}$$
    \item the contribution $u_{\spec \pool \outp}$ of the flow through pool $\pool$ to the excess of attribute $\spec$ at output $\outp$, i.e.,
    $$u_{\spec \pool\outp} := \sum_{\inp \in \inputs[\pool]} \gamma_{\spec \inp \outp} w_{\inp \pool\outp}$$
  \item the contribution $y_{\spec \pool \outp}$ of by-pass flow to the excess of attribute $\spec$ at output $\outp$, i.e.,
  $$y_{\spec \pool \outp} := \sum_{\inp \in \inputs[\outp]} \gamma_{\spec \inp \outp}
    \flowOnArc{\inp}{\outp} + \sum_{\pool' \in \pools[\outp] \mid \pool' \neq \pool} \sum_{\inp \in \inputs[\pool']} \gamma_{\spec \inp \outp} w_{\inp \pool' \outp}$$
    \item the attribute quality $t_{\spec \pool \outp}$ of the flow through pool $\pool$, i.e.,
    $$t_{\spec \pool} := \sum_{\inp \in \inputs[\pool]} \gamma_{\spec \inp \outp} q_{\inp \pool}.$$
\end{enumerate}
With these decision variables, the quality constraint associated with
attribute $\spec$ of output $\outp$ and the capacity constraint
associated with output $\outp$ from \eqref{eq:pooling-region} can be
written as:
\begin{subequations}
\label{eq:newvarcons}
\begin{align}
y_{\spec\pool \outp} + u_{\spec \pool\outp} &\leq 0, \quad && \fa \spec \in \specs,\ \outp \in \outputs  \label{eq:genbase1} \\
z_{\pool \outp} + x_{\pool \outp} & \leq C_{\outp}, && \fa \outp \in \outputs \label{eq:genbase2} .
\end{align}
\end{subequations}

A key property of these new decision variables is the relationship between
the flow and quality in the pool with the excess of the attribute
contributed by the flow through the pool
\begin{align}
\label{eq:oneqc}
u_{\spec \pool\outp} & = \xvar_{\pool\outp}t_{\spec \pool} && \fa \pool \in \pools,\ \outp \in \outputs,
\end{align}
which is valid because using \eqref{eq:pq_wdef} and \eqref{eq:pq_pq-1}
\begin{align*}
u_{\spec \pool\outp} = \sum_{\inp \in \inputs[\pool]}  \gamma_{\spec\inp\outp} w_{\inp\pool\outp} = \sum_{\inp \in
\inputs[\pool]}
\gamma_{\spec\inp\outp} q_{\inp\pool} x_{\pool\outp}
= \sum_{\inp \in \inputs[\pool]}
\gamma_{\spec\inp\outp} q_{\inp\pool} \sum_{i' \in \inputs[\pool]} w_{i'\pool\outp} =  t_{\spec \pool}\xvar_{\pool\outp}.
\end{align*}

In order to derive bounds on the new decision variables we define the parameters  $\ugamma_{\spec \pool}$ and $\ogamma_{\spec \pool}$ representing bounds on the excess of attribute $\spec$ over inputs that are connected to pool $\pool$, and $\uGamma_{\spec\pool\outp}$ and $\oGamma_{\spec\pool\outp}$ representing  bounds on the
excess of attribute $\spec$ over inputs that are connected to output $\outp$ via a by-pass flow :
\begin{align*}
 \ugamma_{\spec\pool} &= \min_{\inp \in \inputs[\pool]} \gamma_{\spec\inp} & \uGamma_{\spec\pool\outp} &= \min\bigl\{ \gamma_{\spec\inp} : i \in I_\outp \cup \bigcup_{\pool' \in \pools\setminus \{\pool\}}
  I_{\pool'} \bigr\}\\
 \ogamma_{\spec\pool} &= \max_{\inp \in \inputs[\pool]}
  \gamma_{\spec\inp}  & \oGamma_{\spec\pool\outp} &=  \max\bigl\{ \gamma_{\spec\inp} : i \in I_\outp \cup \bigcup_{\pool' \in \pools\setminus \{\pool\}} I_{\pool'} \bigr\}.
\end{align*}
%where $\lambda_{ki}$ is the quality of attribute $\spec$ per unit of input $\inp$.
We thus have,
\begin{subequations}
\label{eq:bndorig}
\begin{align}
   t_{\spec \pool} & \in [\ugamma_{\spec\pool},\ogamma_{\spec\pool}]  && \fa \spec \in \specs,\ \pool \in \pools \label{eq:bndorig2}\\
\uGamma_{\spec\pool\outp} z_{\pool \outp} & \leq y_{\spec \pool \outp}
                                            \leq
                                            \oGamma_{\spec\pool\outp}
                                            z_{\pool \outp},   \quad
                                                                      && \fa \spec \in \specs,\ \pool \in \pools,\ \outp \in \outputs.  \label{eq:bndorig1}
\end{align}
\end{subequations}

Despite the many relaxations performed in deriving this set, the \nonconvex relation \eqref{eq:oneqc}, which
relates the contribution of the excess from the pool to the attribute quality of the pool and the quantity passing
through the pool,
still preserves a key \nonconvex structure of the problem.

With these variables and constraints we now formulate the relaxation
of the pooling problem that we study. To simplify notation, we
drop the fixed indices $\pool,\outp,$ and $\spec$. Gathering the
constraints \eqref{eq:newvarcons}, \eqref{eq:oneqc}, and
\eqref{eq:bndorig}, together with nonnegativity on the $z$ and $\xvar$
variable, we define the set $T$ as those
$(\xvar,u,y,z,t) \in \mathbb{R}^5$ that satisfy:
\begin{align}
u  &= \xvar t \label{base0} \\
y + u &\leq 0 \label{eq:yu} \\
z + \xvar &\leq C \label{eq:zs} \\
y & \leq \oGamma z \label{eq:yz1} \\
y & \geq \uGamma z \label{eq:yz2} \\
z \geq 0, \
\xvar \in [0,C], \
t & \in [\ugamma,\ogamma].\notag
\end{align}
We can assume, without loss of generality, that $C=1$ by
scaling the variables $\xvar$, $u$, $y$, and $z$ by
$C^{-1}$.

Due to the nonlinear equation $u = \xvar t$, $T$ is a \nonconvex set
unless $\xvar$ or $t$ is fixed. Using the bounds $0 \leq \xvar \leq 1$
and $\ugamma \leq t \leq \ogamma$, the constraint $u=\xvar t$ can be
relaxed by the McCormick inequalities \cite{mccormick:76}:
\begin{align}
  u - \ugamma \xvar &\geq 0 \label{eq:us1} \\
  \ogamma \xvar - u &\geq 0 \label{eq:us2} \\
  u - \ugamma \xvar &\leq t - \ugamma \label{eq:ust1} \\
  \ogamma \xvar  - u &\leq \ogamma -t \label{eq:ust2} .
\end{align}
\Cref{eq:us1,eq:us2,eq:ust1,eq:ust2} provide the best possible convex
relaxation of the feasible points of $u = \xvar t$ given that $\xvar$ and $t$
are in the bounds mentioned above. However, replacing the \nonconvex constraint $u=\xvar t$ with these inequalities is not
sufficient to define $\conv\Of{T}$.

Note that \cref{eq:us1,eq:us2,eq:ust1,eq:ust2} imply the bounds $0 \leq \xvar
\leq 1$ and $\ugamma \leq t \leq \ogamma$. Also the bound constraint
$z\geq 0$ is implied by \cref{eq:yz1,eq:yz2}.
Thus, we define the
standard relaxation of the set $T$ by
\[ R^0 := \{ (\xvar,u,y,z,t) : \cref{eq:yu,eq:zs,eq:yz1,eq:yz2,eq:us1,eq:us2,eq:ust1,eq:ust2} \} . \]

Every convex set is described completely by its extreme points and
rays. The set $T$ is bounded and so has no extreme rays.
In \cite{luedtke_dambrosio_linderoth_schweiger:pooling_extreme_points}, we have characterized the extreme points of $T$, showing they are not finite. Thus, the convex hull of
$T$ is not a polyhedron.

\subsection{Valid Inequalities}
We now present the new valid inequalities for $\conv(T)$, two of them linear, and two of them convex
nonlinear. Depending on the signs of
$\ugamma$ and $\ogamma$, some of these inequalities are redundant. In the following, an inequality is said to be {\it
valid} for a set if every point in the set satisfies the inequality.

\begin{theorem}
  \label{thm:ineq1}
  If $\uGamma < 0$, then the following inequality is valid for $T$:
  \begin{equation}
    \label{eq:ineq1}
    (u - \uGamma \xvar)(u - \ugamma \xvar) \leq - \uGamma \xvar (t - \ugamma)
  \end{equation}
\end{theorem}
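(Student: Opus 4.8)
The plan is to reduce the claimed inequality to a product of three sign-definite factors by exploiting the defining equation $u = \xvar t$ of $T$ together with one auxiliary linear bound that is itself valid for $T$.

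First I would use \eqref{base0} to rewrite the two bracketed factors on the left-hand side of \eqref{eq:ineq1}: since $u = \xvar t$, we have $u - \uGamma\xvar = \xvar(t - \uGamma)$ and $u - \ugamma\xvar = \xvar(t - \ugamma)$. Substituting these and also expressing the right-hand side through the same factors, the inequality \eqref{eq:ineq1} becomes equivalent to the statement that its left-hand side minus its right-hand side is nonpositive, i.e.,
\[
\xvar(t-\ugamma)\bigl(\xvar(t-\uGamma) + \uGamma\bigr) \leq 0 .
\]
It therefore suffices to determine the sign of each of the three factors $\xvar$, $t - \ugamma$, and $\xvar(t-\uGamma) + \uGamma$. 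The first two are immediate from the variable bounds: $\xvar \geq 0$ and $t \geq \ugamma$, so both $\xvar \geq 0$ and $t - \ugamma \geq 0$.

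The crux is the third factor, which I claim is nonpositive, and this is where the hypothesis $\uGamma < 0$ is used. Chaining the defining constraints of $T$, I would argue
\[
u \leq -y \leq -\uGamma z \leq -\uGamma(1 - \xvar),
\]
where the first inequality is \eqref{eq:yu}, the second is \eqref{eq:yz2}, and the third uses $z \leq 1 - \xvar$ from \eqref{eq:zs} (with $C=1$ as normalized earlier) together with $z \geq 0$ and the fact that $-\uGamma > 0$, so multiplying $z \leq 1 - \xvar$ by $-\uGamma$ preserves the direction. Rearranging $u \leq -\uGamma(1-\xvar)$ gives $u - \uGamma\xvar \leq -\uGamma$, which, upon substituting $u - \uGamma\xvar = \xvar(t-\uGamma)$, is exactly $\xvar(t-\uGamma) + \uGamma \leq 0$.

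With all three factors having a definite sign—two nonnegative and one nonpositive—their product is nonpositive, which establishes the reduced inequality and hence \eqref{eq:ineq1}. The main obstacle is recognizing the auxiliary valid inequality $u - \uGamma\xvar \leq -\uGamma$ (driven by combining the quality constraint \eqref{eq:yu}, the by-pass bound \eqref{eq:yz2}, and the capacity constraint \eqref{eq:zs}) and seeing that it coincides exactly with the third factor of the factored form; once the product structure is exposed, the remaining sign checks are routine. The degenerate situations $\xvar = 0$ or $t = \ugamma$ simply make the left-hand side of the reduced inequality vanish, so they require no separate treatment.
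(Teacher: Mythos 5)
Your proof is correct and is essentially the paper's own argument: the key step in both is the auxiliary valid inequality $u - \uGamma \xvar \leq -\uGamma$ obtained by combining \cref{eq:yu}, \cref{eq:zs}, and \cref{eq:yz2}, which is then multiplied by the nonnegative quantity $\xvar(t-\ugamma)$ and combined with the substitution $u = \xvar t$. Your factored presentation of the difference of the two sides is just a repackaging of that same multiplication, so there is no substantive difference from the paper's proof.
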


\begin{proof}
  Aggregating the inequalities \cref{eq:yu} (with weight 1), \cref{eq:zs} (with weight $-\uGamma$), and \cref{eq:yz2}
  (with weight 1) yields the inequality $u - \uGamma \xvar \leq -\uGamma$, which is valid for $R^0$.
  Multiplying this inequality by $\xvar (t - \ugamma) \geq 0$ on both sides yields the nonlinear inequality
  \begin{equation*}
    (u - \uGamma \xvar) \xvar(t - \ugamma) \leq (-\uGamma)\xvar(t-\ugamma)
  \end{equation*}
  which is also valid for $R^0$. Substituting $u=\xvar t$ into the left-hand-side of this yields \cref{eq:ineq1}.
\end{proof}

We observe that if $\ogamma < 0$, then \cref{eq:ineq1} is redundant.
Indeed, $\ogamma < 0$ implies $t < 0$ and therefore $u < 0$, which in
turn implies $u - \uGamma \xvar < -\uGamma \xvar$. On the other hand, $\xvar \leq
1$ and $t - \ugamma > 0$ imply that $t - \ugamma \geq \xvar t - \ugamma \xvar =
u - \ugamma \xvar$. Furthermore, $0 = u - \xvar t \leq u - \ugamma \xvar$ and
$-\uGamma \xvar \geq 0$.
Thus, we conclude that \cref{eq:ineq1} is always strict
if $\ogamma < 0$:
\begin{equation*}
  (u - \uGamma \xvar)(u - \ugamma \xvar)  < -\uGamma \xvar (u - \ugamma \xvar)
   \leq -\uGamma \xvar (t - \ugamma) .
\end{equation*}

We next show that \cref{eq:ineq1} is second-order cone representable
and thus convex. We can rewrite \cref{eq:ineq1} as:
\begin{align}
   (u - \uGamma \xvar)(u - \ugamma \xvar) \leq - \uGamma \xvar (t - \ugamma) \nonumber
  \ \Leftrightarrow \ & (u - \ugamma \xvar)^2 + (\ugamma - \uGamma)\xvar(u - \ugamma \xvar) \leq -
  \uGamma \xvar (t - \ugamma) \nonumber \\
  \Leftrightarrow \ & (u - \ugamma \xvar)^2 \leq \xvar \Bigl[ (-\uGamma)(t - \ugamma) + (\uGamma -
  \ugamma)(u - \ugamma \xvar) \Bigr] . \nonumber
\end{align}
This inequality has the form of a rotated second-order cone, $2 x_1x_2 \geq x_3^2$,
where $x_1  = \xvar/2$, $x_2  = (-\uGamma)(t - \ugamma) + (\uGamma - \ugamma)(u - \ugamma \xvar)$, and
$x_3 = u - \ugamma \xvar$.
Clearly, $x_1 \geq 0$.  The following lemma shows the \nonnegativity of $x_2$
and therefore establishes the second-order cone representability of \cref{eq:ineq1}.
\begin{lemma}
  If $\uGamma < 0$, the following inequality is valid for $T$:
  \begin{equation}
    \label{termpos}
    (-\uGamma)(t - \ugamma) + (\uGamma - \ugamma)(u - \ugamma \xvar) \geq 0
  \end{equation}
\end{lemma}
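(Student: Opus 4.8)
The plan is to prove \cref{termpos} directly on $T$, exploiting the defining equation $u = \xvar t$ rather than treating the inequality as a purely linear consequence of the relaxation constraints. First I would substitute the identity $u - \ugamma\xvar = \xvar(t-\ugamma)$, which holds on $T$ because $u = \xvar t$, into the left-hand side and factor:
\[
(-\uGamma)(t - \ugamma) + (\uGamma - \ugamma)(u - \ugamma\xvar) = (t - \ugamma)\bigl[\, -\uGamma + (\uGamma - \ugamma)\xvar \,\bigr].
\]
Since $t \ge \ugamma$, the first factor is nonnegative, so the task reduces to showing the bracketed factor is nonnegative on $T$.

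The hard part is that the bracket cannot be controlled using the box bounds $\xvar \in [0,1]$ alone: at $\xvar = 1$ it equals $-\ugamma$, whose sign is not pinned down by the hypothesis $\uGamma < 0$, and it is in fact negative there when $\ugamma > 0$. The resolution I would use is that the remaining constraints of $T$ rule out exactly the $(\xvar,t)$ pairs that make the bracket negative. Concretely, I would reuse the aggregation from the proof of \cref{thm:ineq1}: combining \cref{eq:yu} with weight $1$, \cref{eq:zs} with weight $-\uGamma > 0$, and \cref{eq:yz2} with weight $1$ yields $u - \uGamma\xvar \le -\uGamma$, valid on $R^0 \supseteq T$; substituting $u = \xvar t$ rewrites this as $\xvar t \le -\uGamma(1 - \xvar)$.

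Finally I would write the bracket in the form $-\uGamma(1 - \xvar) - \ugamma\xvar$ and apply this bound to obtain
\[
-\uGamma(1 - \xvar) - \ugamma\xvar \ge \xvar t - \ugamma\xvar = \xvar(t - \ugamma) \ge 0,
\]
where the last step uses $\xvar \ge 0$ and $t \ge \ugamma$. Both factors are then nonnegative, so \cref{termpos} holds. I expect the only point requiring care to be the bookkeeping of the weight $-\uGamma$, whose positivity under the hypothesis $\uGamma < 0$ is precisely what makes the aggregated inequality usable; notably, no assumption on the sign of $\ugamma$ or $\ogamma$ is required.
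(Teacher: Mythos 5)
Your proof is correct and takes essentially the same route as the paper: both arguments reduce the claim to the aggregated linear inequality $(\ugamma - \uGamma)\xvar \leq -\uGamma$ (your nonnegative bracket $-\uGamma + (\uGamma-\ugamma)\xvar \geq 0$ is exactly this inequality rearranged) and then multiply through by $t - \ugamma \geq 0$. The only cosmetic difference is that the paper obtains that linear inequality by additionally adding the McCormick inequality $\ugamma \xvar - u \leq 0$ to the aggregation, whereas you substitute $u = \xvar t$ and invoke $t \geq \ugamma$, $\xvar \geq 0$ in its place, which is equivalent on $T$.
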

\begin{proof}
  First, as $\uGamma < 0$ then by $y+u \leq 0$, $-\uGamma(\xvar + z) \leq -\uGamma $, $\uGamma z - y \leq 0$, and $\ugamma \xvar -
  u \leq 0$, we have
  $ (\ugamma - \uGamma)\xvar  \leq -\uGamma$,
  and therefore, using $t - \ugamma \geq 0$,
  \begin{equation}
    \label{intermed}
    (\ugamma - \uGamma)(t - \ugamma) \xvar \leq (-\uGamma)(t - \ugamma).
  \end{equation}
  But then, using $u = t\xvar$, yields
  \[ (\ugamma - \uGamma)(t - \ugamma) \xvar = (\ugamma - \uGamma)(t\xvar -
  \ugamma \xvar)   = (\ugamma - \uGamma)(u -
  \ugamma \xvar). \]
  Substituting into \cref{intermed} and rearranging yields the result.
  %\[ (\ugamma - \uGamma)(u - \ugamma \xvar) \leq (-\uGamma)(t - \ugamma)  \]
  %and rearranging yields the result.
\end{proof}

The second inequality we derive is valid for points in $T$ with $y > 0$.
\begin{theorem}
  \label{thm:yikes}
  If $\oGamma > 0$ and $\ugamma < 0$, then the following inequality is valid for $T$ when $y > 0$:
  \begin{equation}
    \label{yikes}
    % (\oGamma \ogamma - \ugamma^2)\xvar + (\ogamma - \ugamma)y - (\oGamma - \ugamma)u + \frac{(-\ugamma)(u - \ugamma
    % \xvar)^2}{y + u - \ugamma \xvar} \leq \oGamma C( \ogamma  - \oGamma t )
    (\ogamma - \ugamma)y + \oGamma(\ogamma \xvar - u) + \frac{\ugamma y(u - \ugamma
      \xvar)}{y + u - \ugamma \xvar} \leq \oGamma ( \ogamma  - t )
  \end{equation}
\end{theorem}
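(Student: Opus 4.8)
The plan is to clear the single fraction in \eqref{yikes} and reduce the claim to a polynomial inequality that can be certified directly from the defining constraints of $T$, in the spirit of the multiply-and-substitute argument used for \cref{thm:ineq1}.

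First I would set $a := u - \ugamma \xvar$ and record the elementary facts that tame the fractional term. By \cref{eq:us1} (equivalently, from $\xvar \geq 0$, $t \geq \ugamma$, and $u = \xvar t$) we have $a = \xvar(t - \ugamma) \geq 0$, while the hypothesis $y > 0$ forces the denominator $y + u - \ugamma \xvar = y + a$ to be strictly positive. Hence \eqref{yikes} may be multiplied through by $y + a > 0$ without reversing it. Substituting $u = \xvar t$ throughout---so that $\ogamma \xvar - u = \xvar(\ogamma - t)$ and $(\ogamma \xvar - u) - (\ogamma - t) = -(1 - \xvar)(\ogamma - t)$---and cancelling the two cross terms $\pm\,\ugamma a y$ that appear, the cleared inequality collapses to the single polynomial claim
\begin{equation*}
\ogamma\, a y + (\ogamma - \ugamma)\, y^2 \;\leq\; \oGamma\,(1 - \xvar)(\ogamma - t)\,(a + y). \tag{$\star$}
\end{equation*}
This reduction is the first half of the argument.

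The core of the proof is certifying $(\star)$. Here the decisive ingredient is the bound $y \leq \oGamma z \leq \oGamma(1 - \xvar)$, which follows from \cref{eq:yz1,eq:zs} together with $C = 1$; writing $\delta := \oGamma(1 - \xvar) - y \geq 0$ and splitting the right-hand side of $(\star)$ as $y\,(\ogamma - t)(a + y) + \delta\,(\ogamma - t)(a + y)$, the $\delta$-free part regroups as $y\bigl[t a + (t - \ugamma) y\bigr]$. The key algebraic identity is
\[
 t a + (t - \ugamma) y = t\,\xvar(t - \ugamma) + (t - \ugamma)\,y = (t - \ugamma)(\xvar t + y) = (t - \ugamma)(u + y),
\]
which is nonpositive because $t - \ugamma \geq 0$ and $u + y \leq 0$ by \cref{eq:yu}. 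Thus the $\delta$-free part equals $y\,(t - \ugamma)(u + y) \leq 0$, and the remaining contribution $-\delta\,(\ogamma - t)(a + y)$ is also nonpositive since $\delta \geq 0$, $\ogamma - t \geq 0$, and $a + y > 0$. Summing the two nonpositive quantities proves $(\star)$, and dividing back through by $y + a > 0$ recovers \eqref{yikes}.

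I expect the main obstacle to be the reduction to $(\star)$ and, within its certification, spotting the factorization $t a + (t - \ugamma) y = (t - \ugamma)(u + y)$ that ties the cleared inequality back to the constraint $y + u \leq 0$; the rest is sign bookkeeping driven by $\ogamma > 0$, $\ugamma < 0$, and the bound $y \leq \oGamma(1 - \xvar)$. It is worth noting that the hypotheses $\oGamma > 0$ and $\ugamma < 0$ are precisely what make the regime $y > 0$ nonempty---otherwise $y \leq \oGamma z \leq 0$ or $u = \xvar t \geq 0$ would contradict $y + u \leq 0$---so the case restriction in the statement is structural rather than incidental.
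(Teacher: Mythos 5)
Your proof is correct, and it rests on exactly the same two ingredients as the paper's own argument: the aggregated inequality $y + \oGamma\xvar \leq \oGamma$ (your $y \leq \oGamma(1-\xvar)$, obtained from \cref{eq:zs} and \cref{eq:yz1}) and the constraint $y+u\leq 0$, combined with the substitution $u=\xvar t$. The difference is purely presentational --- you clear the positive denominator $y+u-\ugamma\xvar$ and certify the resulting polynomial inequality $(\star)$ as a sum of two nonpositive terms, whereas the paper bounds the fractional term in place by $-y(t-\ugamma)$; a minor side benefit of your route is that it never divides by $-\ugamma\xvar$ and so uses $\ugamma<0$ only to observe that the regime $y>0$ is nonvacuous.
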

\begin{proof}
  First, adding \cref{eq:zs} scaled by weight $\oGamma > 0$ to \cref{eq:yz1} yields the inequality
  \begin{equation}
    \label{basic}
    y + \oGamma \xvar \leq \oGamma
  \end{equation}
  which is valid for $R^0$.

  Next, using the substitution $u = \xvar t$ the left-hand-side of \eqref{yikes} becomes:
  \begin{align*}
    (\ogamma - \ugamma)y  & + \oGamma(\ogamma \xvar - u) + \frac{\ugamma y(u - \ugamma
      \xvar)}{y + u - \ugamma \xvar}  \\
    & = (\ogamma - \ugamma)y  + \oGamma \xvar (\ogamma  - t) + \frac{\ugamma y \xvar(t - \ugamma
      )}{y + \xvar(t - \ugamma)}  \\
    & \leq   (\ogamma - \ugamma)y +  \oGamma \xvar(\ogamma - t)   + \frac{\ugamma y\xvar (t - \ugamma)}{-\xvar t + \xvar(t - \ugamma)}  &&
    (\mbox{$y \leq -\xvar t$ and $\ugamma y\xvar(t-\ugamma) \leq 0$}) \\
    & =  (\ogamma - \ugamma)y + \oGamma \xvar(\ogamma - t)    - y(t - \ugamma) \\
    & = \oGamma \xvar (\ogamma - t) + y(\ogamma - t) \\
    & = (\oGamma \xvar + y) (\ogamma - t)  \leq \oGamma  (\ogamma - t)  && \mbox{ because $\ogamma \geq t$ and by \cref{basic}}.
  \end{align*}
\end{proof}
The
conditional constraint \eqref{yikes} cannot be directly
used in an algebraic formulation. We thus derive a convex reformulation for \eqref{yikes} that is valid also for $y \leq 0$. To this end define the
function $h: \reals\times\realsPos \rightarrow \reals$ as
\begin{align*}
  h\Of{y,v} \eqdef
  \begin{cases}
    0 & \text{if } y \leq 0\\
    (\ogamma - \ugamma)y + \ugamma g\Of{y,v} & \text{if } y > 0.
  \end{cases}
\end{align*}
where
\begin{align*}
  g\Of{y,v} \eqdef \frac{yv}{y + v} .
\end{align*}
We next show that $h$ is convex, and with $v=u - \ugamma \xvar$, can be used to define a constraint equivalent to \eqref{yikes}.
\begin{lemma}
  \label{thm:convex_extension}
  If $\oGamma > 0$ and $\ugamma < 0$, then the inequality
  \begin{align}
    \label{yikes_ext}
    \oGamma(\ogamma \xvar - u) + h\Of{y,u - \ugamma
  \xvar} \leq \oGamma ( \ogamma  - t )
  \end{align}
  is valid for $T$, $h$ is convex over $\reals\times\realsPos$, and any point $(\xvar, u, y, t)$ with $y > 0$ satisfies \eqref{yikes_ext}
  if and only if it satisfies \eqref{yikes}.
\end{lemma}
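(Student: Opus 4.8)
The statement to prove (Lemma \ref{thm:convex_extension}) has three distinct claims that I will tackle in sequence.

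**Claim 1: Equivalence for $y > 0$.** When $y > 0$, we have $h(y, u - \ugamma\xvar) = (\ogamma - \ugamma)y + \ugamma g(y, u - \ugamma\xvar) = (\ogamma - \ugamma)y + \ugamma \frac{y(u - \ugamma\xvar)}{y + (u - \ugamma\xvar)}$. Substituting $v = u - \ugamma\xvar$, inequality \eqref{yikes_ext} becomes exactly \eqref{yikes}. So this is immediate by substitution.

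**Claim 2: Validity.** For $y > 0$, validity follows from Theorem \ref{thm:yikes} via Claim 1. For $y \leq 0$, $h = 0$, so I need $\oGamma(\ogamma\xvar - u) \leq \oGamma(\ogamma - t)$, i.e., $\oGamma(\ogamma\xvar - u - \ogamma + t) \leq 0$. Using $u = \xvar t$: $\ogamma\xvar - \xvar t - \ogamma + t = (\ogamma - t)(\xvar - 1) = -(\ogamma - t)(1 - \xvar) \leq 0$ since $t \leq \ogamma$ and $\xvar \leq 1$. With $\oGamma > 0$, the product is $\leq 0$, so validity holds.

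**Claim 3: Convexity of $h$.** This is the main obstacle.

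The plan for Lemma \ref{thm:convex_extension} is to verify the three assertions in turn. The equivalence for $y > 0$ is immediate: substituting $v = u - \ugamma\xvar$ into the definition of $h$ in the regime $y > 0$ recovers exactly the fractional term $\frac{\ugamma y(u - \ugamma\xvar)}{y + u - \ugamma\xvar}$ together with $(\ogamma - \ugamma)y$, so \eqref{yikes_ext} reduces to \eqref{yikes}. For validity, I would split on the sign of $y$. When $y > 0$, validity follows directly from \Cref{thm:yikes} via this equivalence. When $y \leq 0$, we have $h = 0$, and I must show $\oGamma(\ogamma\xvar - u) \leq \oGamma(\ogamma - t)$; substituting $u = \xvar t$ and factoring gives $\oGamma(\ogamma\xvar - u) - \oGamma(\ogamma - t) = \oGamma(\ogamma - t)(\xvar - 1)$, which is nonpositive because $\oGamma > 0$, $t \leq \ogamma$, and $\xvar \leq 1$.

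The convexity of $h$ over $\reals \times \realsPos$ is the crux. First I would establish that $g(y,v) = \frac{yv}{y+v}$ is concave on its domain (where $y + v > 0$); this is a standard fact (it is the harmonic-mean-type function, and one can verify concavity via its Hessian or recognize it as $\left(\frac{1}{y} + \frac{1}{v}\right)^{-1}$ restricted appropriately). Since $\ugamma < 0$, the term $\ugamma g(y,v)$ is then \emph{convex} in the region $y > 0$, and adding the linear term $(\ogamma - \ugamma)y$ preserves convexity. Thus $h$ is convex on the open halfspace $y > 0$ and is identically zero (hence convex) on $y \leq 0$; the remaining difficulty is gluing these two pieces together across the boundary $y = 0$.

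The gluing is where I expect the real work. The standard criterion is that a function defined piecewise by two convex functions sharing a boundary is convex provided it is continuous there and the one-sided directional derivatives match up appropriately (more precisely, the subgradient from the $y \leq 0$ side lies below the function on the $y > 0$ side). I would verify continuity at $y = 0$ by checking that $\lim_{y \downarrow 0} h(y,v) = 0$: as $y \to 0^+$ with $v > 0$ fixed, both $(\ogamma - \ugamma)y \to 0$ and $g(y,v) = \frac{yv}{y+v} \to 0$, so $h \to 0$, matching the $y \leq 0$ branch. Then I would compute $\lim_{y \downarrow 0} \frac{\partial h}{\partial y}$ and confirm the gradient behaves consistently, showing no downward kink is created at the seam — equivalently, that the epigraph is convex by checking that a segment joining a point with $y < 0$ to one with $y > 0$ stays above the graph, using the continuity and the nonnegativity of the right-derivative at the boundary. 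I would phrase the final argument either through this one-sided-derivative matching condition or, more robustly, by exhibiting $h$ as a supremum of affine functions or as a perspective-type transformation that is manifestly convex, whichever yields the cleanest verification that the boundary introduces no nonconvexity.
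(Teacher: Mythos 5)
Your Claims 1 and 2 (the equivalence for $y>0$ and the validity of \eqref{yikes_ext}, split on the sign of $y$) are correct and coincide with the paper's argument essentially verbatim. The gap is in Claim 3, which you rightly identify as the crux but do not actually complete: everything after ``the gluing is where I expect the real work'' is a plan in the conditional mood, not a proof. Convexity of each piece plus continuity across $y=0$ is not sufficient, and the sufficient condition you propose --- matching of the one-sided derivatives across the seam --- is precisely the step that needs a real computation. If you carry it out you find, for fixed $v>0$, that $\lim_{y\downarrow 0}\partial_y h(y,v) = (\ogamma-\ugamma)+\ugamma\cdot\tfrac{v^2}{(0+v)^2} = \ogamma$, while the derivative from the $y\le 0$ side is $0$; so your ``no downward kink'' condition amounts to $\ogamma\ge 0$, which is not implied by the stated hypotheses $\oGamma>0$ and $\ugamma<0$. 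The route you sketch therefore does not close without invoking a sign condition on $\ogamma$ that you never establish, and the alternative routes you mention (supremum of affine functions, perspective-type representation) are named but not exhibited.

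For comparison, the paper handles the mixed case $y_1\le 0<y_2$ by a different device: it first shows $h\ge 0$ on $\{y>0\}$ by writing $h=\bigl((\ogamma-\ugamma)y^2+\ogamma yv\bigr)/(y+v)$, then locates the point $p_3$ where the segment $[p_1,p_2]$ crosses $y=0$, observes $h(p_3)=0=h(p_1)$, applies convexity of $h$ on the sub-segment $[p_3,p_2]$, and concludes using $\lambda\le\hat\lambda$ together with $h(p_2)\ge 0$. Note that this argument also leans on the sign of the numerator $(\ogamma-\ugamma)y^2+\ogamma yv$, i.e.\ on $\ogamma\ge 0$; the sign of $\ogamma$ is genuinely load-bearing at the seam, and any completion of your sketch would have to confront it explicitly rather than appeal to ``consistent gradient behaviour.''
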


\begin{proof}
The statement that any point with $y > 0$ satisfies \eqref{yikes_ext}
  if and only if it satisfies \eqref{yikes} is immediate from the definition of $h$.

By \cref{thm:yikes}, inequality \eqref{yikes_ext} is satisfied by all points in $T$
  with $y > 0$. If $y \leq 0$ the inequality is also valid since
  \begin{align*}
    \oGamma(\ogamma \xvar - u) + h\Of{y,u,\xvar}
     = \oGamma(\ogamma \xvar - u)
     = \xvar \oGamma (\ogamma - t)
     \leq \oGamma (\ogamma - t).
  \end{align*}

We next show that $g$ is concave over $\realsPosStrict\times\realsPos$.
We use the python
library Sympy~\cite{sympy} to compute the Hessian of $g$, and obtain
\begin{align*}
    \left(\begin{matrix}\frac{2 v y}{\left(v + y\right)^{3}} - \frac{2 y}{\left(v + y\right)^{2}} & \frac{2 v y}{\left(v + y\right)^{3}} - \frac{v}{\left(v + y\right)^{2}} - \frac{y}{\left(v + y\right)^{2}} + \frac{1}{v + y}\\\frac{2 v y}{\left(v + y\right)^{3}} - \frac{v}{\left(v + y\right)^{2}} - \frac{y}{\left(v + y\right)^{2}} + \frac{1}{v + y} & \frac{2 v y}{\left(v + y\right)^{3}} - \frac{2 v}{\left(v + y\right)^{2}}\end{matrix}\right)
\end{align*}
and its Eigenvalues as
$\lambda_1  = 0$ and
$\lambda_2  = - (2 v^{2} + 2 y^{2})/(v + y)^{3}$.
The second Eigenvalue $\lambda_2$ is negative because $v \geq 0$, $y > 0$. The
Hessian is therefore negative semidefinite and $g$ is concave.
% http://live.sympy.org/
% >>> print latex(hessian(v*y/(v+y), (v,y)))
% \left[\begin{matrix}\frac{2 v y}{\left(v + y\right)^{3}} - \frac{2 y}{\left(v + y\right)^{2}} & \frac{2 v y}{\left(v + y\right)^{3}} - \frac{v}{\left(v + y\right)^{2}} - \frac{y}{\left(v + y\right)^{2}} + \frac{1}{v + y}\\\frac{2 v y}{\left(v + y\right)^{3}} - \frac{v}{\left(v + y\right)^{2}} - \frac{y}{\left(v + y\right)^{2}} + \frac{1}{v + y} & \frac{2 v y}{\left(v + y\right)^{3}} - \frac{2 v}{\left(v + y\right)^{2}}\end{matrix}\right]
% >>> print latex(hessian(v*y/(v+y), (v,y)).eigenvals())
% \left \{ 0 : 1, \quad - \frac{2 v^{2} + 2 y^{2}}{\left(v + y\right)^{3}} : 1\right \}

%The conditionally validity of \eqref{yikes} does not pose a problem
%for the convex hull analysis, but it does for computation.

Finally, we show that $h$
  is convex. Let $p_i = (y_i,v_i) \in \reals\times\realsPos$ for $i=1,2$ and
  $\lambda \in (0,1)$. We need to show that
  \begin{align}
    \label{thm:yikes_conv}
    h\Of{\lambda p_1 + (1-\lambda) p_2} \leq \lambda h\Of{p_1} +
    (1-\lambda) h\Of{p_2}.
  \end{align}
  If $y_i > 0$, $i=1,2$, then \eqref{thm:yikes_conv} holds because $g$ is concave over $\realsPosStrict\times\realsPos$,
  and hence because $\ugamma < 0$, $h$ is convex over this region.
  If $y_i \leq 0$, $i=1,2$, then then \eqref{thm:yikes_conv} holds because $h$ is linear over the points with $y \leq 0$.
  Therefore, assume without loss of generality $y_1 \leq 0$ and
  $y_2 > 0$. First we show that $h$ is \nonnegative. For $y \leq 0$
  this is clear and for $y > 0$ we have
  \begin{align*}
    h\Of{u,v}  = (\ogamma - \ugamma)y + \frac{\ugamma yv}{y + v}
               = \frac{(\ogamma - \ugamma)y (y + v) + \ugamma yv}{y
                + v}
               = \frac{(\ogamma - \ugamma)y^2 + \ogamma yv}{y + v}
                \geq 0.
  \end{align*}
  Furthermore we know $h\Of{p_1} = 0$ since $y_1 \leq 0$. If
  $h\Of{\lambda p_1 + (1-\lambda) p_2} = 0$, then
  \eqref{thm:yikes_conv} is always fulfilled. If
  $h\Of{\lambda p_1 + (1-\lambda) p_2}$ does not vanish, then denote
  by $p_3 = (y_3, v_3)$ the point on the line between $p_1$ and $p_2$
  with $y_3 = 0$. If $v_3 = 0$, then $v_2 = 0$ since $v_1 \geq 0$. In
  this case $g$ vanishes such that $h$ is linear %between on the line
  between $p_3$ and $p_2$ and \eqref{thm:yikes_conv} is fulfilled. If
  $v_3 > 0$, then $g$ is also well-defined at $p_3$ with
  $g\Of{p_3} = 0$ so that
  $h(p_3) = (\ogamma - \ugamma)y_3 + \ugamma g\Of{p_3} = 0$ and $h$ is convex
  on the line between $p_3$ and $p_2$. Furthermore, there exists a
  $\hat{\lambda}$ such that
   \begin{align*}
     \lambda p_1 + (1-\lambda) p_2 = \hat{\lambda} p_3 + (1-\hat{\lambda}) p_2
   \end{align*}
   and since $p_3$ is closer to $p_2$ than $p_1$, it holds that $\lambda \leq
   \hat{\lambda}$.
   Finally, we can show that \eqref{thm:yikes_conv} also holds in this
   case:
   \begin{align*}
     % \lambda h\Of{p_1} + (1-\lambda) h\Of{p_2}
     % & = \lambda h\Of{p_3} + (1-\lambda) h\Of{p_2} \\
     h\Of{\lambda p_1 + (1-\lambda) p_2}
       & = h\Of{\hat{\lambda} p_3 + (1-\hat{\lambda}) p_2} \\
       & \leq \hat{\lambda} h\Of{p_3} + (1-\hat{\lambda}) h\Of{p_2} \\
       & = \lambda h\Of{p_1} + (1-\hat{\lambda}) h\Of{p_2}
        \leq\lambda h\Of{p_1} + (1-\lambda) h\Of{p_2} .
   \end{align*}
\end{proof}

The remaining two new valid inequalities we present are linear.
\begin{theorem}
  If $\oGamma > 0$, then the following inequality is valid for $T$:
  \begin{align}
    \label{eq:beast_case2}
    (\ogamma-\ugamma)y + \ugamma (\ogamma \xvar - u) + \oGamma (u - \ugamma \xvar) \leq \oGamma(t - \ugamma) .
  \end{align}
\end{theorem}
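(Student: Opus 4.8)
The plan is to substitute the bilinear relation $u = \xvar t$ from \cref{base0} and reduce \cref{eq:beast_case2} to a chain of two elementary bounds. After substitution, $\ugamma(\ogamma\xvar - u) = \ugamma\xvar(\ogamma - t)$ and $\oGamma(u - \ugamma\xvar) = \oGamma\xvar(t - \ugamma)$, so moving the last term to the right-hand side shows that \cref{eq:beast_case2} is equivalent, on $T$, to
\begin{align}
  (\ogamma - \ugamma)y + \ugamma\xvar(\ogamma - t) \leq \oGamma(t - \ugamma)(1 - \xvar). \label{plan:star}
\end{align}
Both factors on the right are nonnegative, since $\oGamma > 0$, $t \geq \ugamma$, and $\xvar \leq 1$, which gives a favorable sign structure to exploit.

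First I would recycle the inequality $y + \oGamma\xvar \leq \oGamma$, valid for $R^0$ (obtained, exactly as in the proof of \cref{thm:yikes} and using $C=1$, by adding \cref{eq:zs} scaled by $\oGamma > 0$ to \cref{eq:yz1}). Rewriting it as $\oGamma(1 - \xvar) - y \geq 0$ and multiplying this nonnegative quantity by the nonnegative quantity $t - \ugamma$ yields
\begin{align}
  y(t - \ugamma) \leq \oGamma(1 - \xvar)(t - \ugamma), \label{plan:dagger}
\end{align}
which is precisely the right-hand side of \cref{plan:star}. It then remains only to bound the left-hand side of \cref{plan:star} by $y(t - \ugamma)$.

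The crux is the auxiliary bound $y + \ugamma\xvar \leq 0$. This follows directly from \cref{eq:yu} together with $u = \xvar t$: these give $y \leq -\xvar t$, hence $y + \ugamma\xvar \leq \xvar(\ugamma - t) = -\xvar(t - \ugamma) \leq 0$ because $\xvar \geq 0$ and $t \geq \ugamma$. Since $\ogamma - t \geq 0$, multiplying yields $(\ogamma - t)(y + \ugamma\xvar) \leq 0$, that is, $\ugamma\xvar(\ogamma - t) \leq y(t - \ogamma)$. Adding $(\ogamma - \ugamma)y$ to both sides collapses the left-hand side of \cref{plan:star} exactly to $y(t - \ugamma)$, and chaining this with \cref{plan:dagger} establishes \cref{plan:star}, and therefore \cref{eq:beast_case2}.

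I expect the main obstacle to be discovering this last maneuver. The naive attempt of bounding the term $(\ogamma - \ugamma)y$ by one of its two available upper bounds, namely $y \leq -\xvar t$ from \cref{eq:yu} or $y \leq \oGamma(1 - \xvar)$ from the scaled \cref{eq:zs} and \cref{eq:yz1}, fails: the residual then involves the convex combination $\ogamma\xvar + \oGamma(1 - \xvar)$, whose sign is not controlled in the regime $\ogamma < 0 < \oGamma$ permitted by the hypothesis $\oGamma > 0$. The key insight is instead to first trade the term $\ugamma\xvar(\ogamma - t)$ against $y$ through the single product $(\ogamma - t)(y + \ugamma\xvar) \leq 0$, thereby eliminating the explicit $t$-dependence and reducing the whole claim to the scaled linear inequality \cref{plan:dagger}.
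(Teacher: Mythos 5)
Your proof is correct and is essentially the paper's own argument in a different packaging: the certificate is identical, combining $y+\ugamma\xvar\leq 0$ (the paper's \cref{ysimp}) with weight $\ogamma-t$ and the aggregate $y+\oGamma\xvar\leq\oGamma$ (i.e., \cref{eq:yz1} plus $\oGamma\cdot$\cref{eq:zs}) with weight $t-\ugamma$, followed by the substitution $u=\xvar t$. The only cosmetic differences are that you substitute $u=\xvar t$ first and aggregate afterwards, and that you derive $y+\ugamma\xvar\leq 0$ from $u=\xvar t$ and $t\geq\ugamma$ rather than from \cref{eq:us1}.
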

\begin{proof}
  First observe that $y + u \leq 0$ and $-u \leq -\ugamma \xvar$ together imply
  \begin{equation}
    \label{ysimp}
    y \ \leq -\xvar\ugamma.
  \end{equation}
  Next,
  \begin{align*}
    (\ogamma - \ugamma)y &=   (t - \ugamma)y + (\ogamma - t)y \\
    & \leq (t - \ugamma)\oGamma z +(\ogamma - t)y  &\quad& \text{because $y \leq \oGamma z$ and $t - \ugamma \geq 0$} \\
    & \leq (t - \ugamma)\oGamma z +(\ogamma - t)(-\xvar\ugamma) & & \text{by \cref{ysimp} and $\ogamma - t \geq 0$}
  \end{align*}
  and thus
  \begin{equation}
    \label{case2proof1}
    (\ogamma - \ugamma)y - (t - \ugamma)\oGamma z + (\ogamma - t)(\xvar\ugamma)  \leq 0 .
  \end{equation}
  Then, multiply the inequality $z+\xvar\leq 1$ on both sides by $\oGamma(t - \ugamma) \geq 0$ to yield:
  \begin{equation}
    \label{case2proof2}
    (t - \ugamma)\oGamma z + (t - \ugamma)\oGamma \xvar \leq (t - \ugamma)\oGamma
  \end{equation}
  Adding \cref{case2proof1,case2proof2} yields:
  \begin{equation}
    \label{case2nlform}
    (\ogamma - \ugamma)y + (\ogamma - t)\xvar \ugamma + (t - \ugamma)\oGamma \xvar \leq  (t - \ugamma)\oGamma
  \end{equation}
  Finally, substituting $u = \xvar t$ from \cref{base0} yields \cref{eq:beast_case2}.
\end{proof}

We next show that if $\ogamma > 0$, then \cref{eq:beast_case2} is redundant.
Assuming  $\ogamma > 0$, then scaling the inequality $-u + \ugamma \xvar \leq 0$ by
$\ogamma > 0$ and combining that with the valid inequality $(\ogamma - \ugamma) y + (\ogamma - \ugamma) u \leq 0$ and yields
\begin{equation}
  \label{c2:red}
  (\ogamma - \ugamma) y - \ugamma u + \ugamma \ogamma \xvar \leq 0.
\end{equation}
But, also since $u - \ugamma \xvar \leq t - \ugamma$, it follows that
$ \oGamma ( u - \ugamma \xvar) \leq \oGamma (t - \ugamma)$ .  Combining this with \cref{c2:red} implies \cref{eq:beast_case2}.

The next theorem presents the last valid inequality for $T$ in this section.
\begin{theorem}
  \label{lem:beast_case3}
  If $\uGamma < 0$, then the following inequality is valid for $T$:
  \begin{align}
    \label{eq:beast_case3}
    (\ugamma - \uGamma)(\ogamma \xvar - u) \leq -\uGamma (\ogamma - t) .
  \end{align}
\end{theorem}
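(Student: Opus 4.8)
The plan is to follow exactly the template used for the preceding theorems: first produce a purely linear valid inequality for $R^0$, then multiply it by a nonnegative quantity, and finally substitute the defining relation $u = \xvar t$ to obtain the nonlinear form.

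First I would derive the linear inequality
\begin{equation*}
  (\ugamma - \uGamma)\xvar \leq -\uGamma,
\end{equation*}
which is valid for $R^0$ and hence for $T$; this is precisely the bound already established while proving \cref{termpos}. Since $\uGamma < 0$, scaling \cref{eq:zs} by $-\uGamma > 0$ gives $-\uGamma(\xvar + z) \leq -\uGamma$. Adding this to \cref{eq:yu}, to \cref{eq:yz2} written as $\uGamma z - y \leq 0$, and to the McCormick inequality \cref{eq:us1} written as $\ugamma \xvar - u \leq 0$, the variables $y$, $u$, and $z$ all cancel, leaving $(\ugamma - \uGamma)\xvar \leq -\uGamma$.

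Next, because $t \leq \ogamma$ the factor $\ogamma - t$ is nonnegative, so I would multiply both sides of the linear inequality by $(\ogamma - t) \geq 0$, preserving its direction, to get
\begin{equation*}
  (\ugamma - \uGamma)\xvar(\ogamma - t) \leq -\uGamma(\ogamma - t).
\end{equation*}
Finally, using $u = \xvar t$ from \cref{base0} gives $\xvar(\ogamma - t) = \ogamma \xvar - \xvar t = \ogamma \xvar - u$, and substituting this into the left-hand side produces exactly \cref{eq:beast_case3}.

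There is no serious obstacle here; the two points requiring care are (i) recognizing that the correct linear inequality to lift is $(\ugamma - \uGamma)\xvar \leq -\uGamma$, whose derivation uses the hypothesis $\uGamma < 0$ in order to scale \cref{eq:zs} by the positive number $-\uGamma$, and (ii) confirming that the multiplier $\ogamma - t$ is nonnegative so that the inequality direction is preserved, which follows directly from $t \in [\ugamma,\ogamma]$. Once the linear inequality is in hand, the remaining steps are immediate.
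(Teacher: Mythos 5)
Your proof is correct and follows essentially the same route as the paper: both establish the linear inequality $(\ugamma - \uGamma)\xvar \leq -\uGamma$ (the paper chains $\uGamma z \leq y \leq -u \leq -\ugamma \xvar$ together with $-\uGamma(z+\xvar) \leq -\uGamma$, which is the same aggregation of \cref{eq:yu,eq:zs,eq:yz2,eq:us1} you write out), then multiply by $\ogamma - t \geq 0$ and substitute $u = \xvar t$. No gaps.
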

\begin{proof}
  Aggregate inequality \cref{eq:zs} with weight $-\uGamma > 0$ yields
  \begin{align}
    \label{eq:beast_case3_base2_weighted}
    -\uGamma (z + \xvar) \leq -\uGamma.
  \end{align}
  Furthermore, using $y \geq \uGamma z$, $\uGamma < 0$, and
  \cref{ysimp}, yields
  $ -\uGamma z +  \xvar \ugamma \leq 0$,
  which combined with
  \cref{eq:beast_case3_base2_weighted}
  yields
  \begin{align*}
    (\ugamma-\uGamma)\xvar \leq -\uGamma .
  \end{align*}
  Multiplying both sides of this inequality by $\ogamma - t \geq 0$  yields
  \[ (\ugamma-\uGamma)\xvar(\ogamma - t) \leq -\uGamma (\ogamma - t) . \]
  Substituting $\xvar t = u$ yields \cref{eq:beast_case3}.
\end{proof}

If $\ugamma < 0$, then $(\ugamma - \uGamma)(\ogamma \xvar - u) \leq -\uGamma(\ogamma \xvar - 0) \leq -\uGamma (\ogamma - t)$ and
so \cref{eq:beast_case3} is redundant.

%%% Local Variables:
%%% mode: latex
%%% TeX-master: "paper"
%%% End:

\section{Convex hull analysis}\label{sec:convex_hull_analysis}
We next demonstrate that the set $R^0$ combined with certain subsets
of the new valid inequalities, depending on the sign of $\ugamma$ and
$\ogamma$, are sufficient to define the convex hull of $T$. Let us
first restate the relevant inequalities for convenience:
\begin{align}
  (u - \uGamma \xvar)(u - \ugamma \xvar) &\leq - \uGamma \xvar (t - \ugamma) \tag{\ref{eq:ineq1}} \label{eq:nl1} \\
  (\ogamma - \ugamma)y + \oGamma(\ogamma \xvar - u) + \frac{\ugamma y (u -
    \ugamma
    \xvar)}{y + u - \ugamma \xvar} & \leq \oGamma (\ogamma  - t) \quad \mathrm{if} \ y > 0 . \tag{\ref{yikes}} \label{eq:nl2}\\
  (\ogamma-\ugamma)y + \ugamma (\ogamma \xvar - u) + \oGamma (u - \ugamma \xvar) &\leq \oGamma(t - \ugamma)
  \tag{\ref{eq:beast_case2}} \label{c2:lin}\\
  (\ugamma - \uGamma)(\ogamma \xvar - u) &\leq -\uGamma (\ogamma - t) .
  \tag{\ref{eq:beast_case3}} \label{c3:lin}
\end{align}
Next, we define the sets which include the nonredundant valid inequalities for
different signs of $\ugamma$ and
$\ogamma$:
\begin{align*}
  R^1 & = \{ (\xvar,u,y,z,t) \in R^0 : \text{\cref{eq:nl1,eq:nl2}} \}, \\
  R^2 & = \{ (\xvar,u,y,z,t) \in R^0 : \text{\cref{eq:nl2,c2:lin}} \}, \\
  R^3 & = \{ (\xvar,u,y,z,t) \in R^0 : \text{\cref{eq:nl1,c3:lin}} \} .
\end{align*}
The following theorems show that $R^1, R^2$, and $R^3$ describe the
convex hull of $T$ in different cases. Since all inequalities that
define the sets are valid for $T$ and convex, $R^i$ are convex
relaxations for $T$ and we know $\conv(T) \subseteq R^i$ for
$i=1,2,3$. To show that a relaxation defines $\conv(T)$ in a
particular case, we show that every extreme point of the relaxation
satisfies the \nonconvex constraint $u=\xvar t$ even though this
equation is not enforced in the relaxation. The three main theorems are stated next,
and are proved using lemmas that are stated and proved
thereafter. \Cref{tab:pooling:pagenumbers} gives an overview of which of
the lemmas is used in the proof of each theorem and on which page the
lemmas are found.
%We believe this gives the reader the best overview
%over the results and the structure of the proofs.

\newcommand{\pageline}[2]{\cref{#1} & \cref{#2} & Page \pageref{#1}\\}

\begin{table}[t]
  \setlength{\tabcolsep}{20pt}
  \centering
  \begin{tabular}{lll}
    \toprule
    Result & Used in proof of & Stated on\\
    \midrule
    \pageline{easycases}{thm:case1,thm:case2,thm:case3}
    \pageline{uless}{thm:case1}
    \pageline{umore}{thm:case1}
    \pageline{c2:uless}{thm:case2}
    \pageline{c2:umore}{thm:case2}
    \pageline{c3:uless}{thm:case3}
    \pageline{c3:umore}{thm:case3}
                       \bottomrule
  \end{tabular}
  \caption{Overview over lemmas used in the proofs of
    \cref{thm:case1,thm:case2,thm:case3}}
  \label{tab:pooling:pagenumbers}
\end{table}

\begin{theorem}
  \label{thm:case1}
  Assume $\ugamma < 0 < \ogamma$ and $\uGamma < 0 < \oGamma$. Then  $\conv(T) = R^1$.
\end{theorem}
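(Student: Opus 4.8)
The inclusion $\conv(T)\subseteq R^1$ is already in hand: every inequality defining $R^1$ — the linear constraints of $R^0$ together with \cref{eq:nl1,eq:nl2} — has been shown to be valid for $T$ and convex, so $R^1$ is a convex set containing $T$. The work therefore lies entirely in the reverse inclusion $R^1\subseteq\conv(T)$. My plan is to exploit that $R^1$ is compact. It is closed, being the intersection of the polyhedron $R^0$ with the closed convex region of the second-order-cone constraint \cref{eq:nl1} and the closed convex region of \cref{yikes_ext} (by \Cref{thm:convex_extension}); and it is bounded because $\xvar\in[0,1]$, $t\in[\ugamma,\ogamma]$, $z\in[0,1]$, $u\in[\ugamma\xvar,\ogamma\xvar]$, and $\uGamma z\le y\le\oGamma z$ all hold on $R^0$. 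By Minkowski's theorem, $R^1=\conv(\ext(R^1))$, so it suffices to show that every extreme point of $R^1$ lies in $T$. Since the McCormick inequalities \cref{eq:us1,eq:us2,eq:ust1,eq:ust2} are automatically satisfied once $u=\xvar t$, we have $T=\{p\in R^0: u=\xvar t\}$, and the whole theorem reduces to a single claim: \emph{every extreme point of $R^1$ satisfies the bilinear equation $u=\xvar t$.}

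To prove this claim I would argue by contradiction through an explicit perturbation. Let $p=(\xvar,u,y,z,t)\in R^1$ with $u\neq\xvar t$; the goal is to produce a direction $d\in\reals^5$ and $\epsilon>0$ with $p\pm\epsilon d\in R^1$, so that $p=\tfrac12\bigl((p+\epsilon d)+(p-\epsilon d)\bigr)$ is not extreme. The natural first reduction is to dispose of the boundary configurations in which $u=\xvar t$ is forced or a one-coordinate move is obviously feasible: if $\xvar=0$ then \cref{eq:us1,eq:us2} give $u=0=\xvar t$, and if $\xvar=1$ then \cref{eq:ust1,eq:ust2} give $u=t=\xvar t$, so these cases cannot have $u\neq\xvar t$; the remaining degenerate values (such as $z=0$, $t\in\{\ugamma,\ogamma\}$, or $y=0$) I would clear by short direct perturbations. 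This is precisely the content I would isolate as an ``easy cases'' lemma, after which one may assume $0<\xvar<1$ together with the relevant strict interiorities.

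With the genuinely interior situation reached, I would split on the sign of $u-\xvar t$ and construct the perturbation separately for $u<\xvar t$ and for $u>\xvar t$; this is the role of the two remaining lemmas. In each case the direction $d$ is chosen in the $(\xvar,u,y,t)$ coordinates so as to keep every \emph{active} linear constraint of $R^0$ feasible while using the gap $u-\xvar t\neq0$ to create room in the bilinear direction — concretely, one moves $t$ and the coupled quantities $u,y$ along a segment whose endpoints remain in $R^1$. The main obstacle, where essentially all the real work lies, is verifying that these neighbors still satisfy the two \emph{nonlinear} constraints: \cref{eq:nl1}, whose right-hand side $-\uGamma\xvar(t-\ugamma)$ moves with the perturbation, and especially \cref{eq:nl2}, whose fractional term $\ugamma y(u-\ugamma\xvar)/(y+u-\ugamma\xvar)$ (equivalently the piecewise-convex $h$) must be controlled. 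Because these two constraints bound $t$ from opposite sides (\cref{eq:nl1} from below and \cref{eq:nl2} from above once $y>0$), and the hypotheses $\ugamma<0<\ogamma$ and $\uGamma<0<\oGamma$ fix every relevant sign, I expect that a point off the bilinear surface always admits a two-sided feasible move in at least one of the two sign-cases; confirming this reduces to a careful but finite analysis of which of \cref{eq:nl1,eq:nl2} and which McCormick and $R^0$ inequalities are tight. Establishing $u=\xvar t$ at every extreme point then gives $\ext(R^1)\subseteq T$, hence $R^1\subseteq\conv(T)$, which closes the equality.
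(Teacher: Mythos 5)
Your proposal follows essentially the same route as the paper: reduce to showing that every extreme point of the compact convex set $R^1$ satisfies $u=\xvar t$, dispatch the boundary configurations via the McCormick inequalities, and for interior points with $u\neq\xvar t$ exhibit a two-sided perturbation $p\pm\epsilon d$ certifying non-extremality, with the case split on the sign of $u-\xvar t$ and on which constraints are active. The only substantive differences are that the paper's two supporting lemmas actually carry out the explicit perturbations and verify \cref{eq:nl1,eq:nl2} in four subcases each---work your outline correctly identifies but defers---and that $t\in\{\ugamma,\ogamma\}$ need not be ``cleared by perturbation'': the McCormick inequalities already force $u=\xvar t$ there, exactly as for $\xvar\in\{0,1\}$.
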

\begin{proof}
  % Since all inequalities describing $R^1$ are valid for $T$, it is
  % clear that $R^1$ is a superset of $T$ and also of $\conv\Of{T}$ as
  % all inequalities are convex.
  Being a bounded convex set, $R^1$ is completely characterized by its
  extreme points. We prove that every extreme point of $R^1$ is
  in $T$, \ie fulfills the equation $u=\xvar t$. Every point in $R^1$ can
  thus be represented as a convex combination of points in $T$ and
  $R^1 = \conv\Of{T}$ is proved.

  It remains to show that $u=\xvar t$ for all $p = (\xvar,u,y,z,t) \in
  \ext\Of{R^1}$. \Cref{easycases} shows that this is the case for $\xvar$ or
  $t$ are at its bounds, \ie if $\xvar\in\zeroOne$ or $t\in\{\ugamma,
  \ogamma\}$. Under the condition $0 < \xvar < 1$ and $\ugamma < t <
  \ogamma$,
  \cref{uless,umore} show that points with $u < \xvar t$ and $u > \xvar t$,
  respectively, cannot be extreme. %This proves the \cref{thm:case1}.
\end{proof}

\begin{theorem}
  \label{thm:case2}
  Assume $\ugamma < \ogamma < 0$ and $\uGamma < 0 < \oGamma$. Then  $\conv(T) = R^2$.
\end{theorem}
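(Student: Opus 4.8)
The plan is to argue exactly as in the proof of \cref{thm:case1}, using the supporting lemmas tailored to this parameter regime. Since $R^2$ is bounded (it is contained in $R^0$, which the McCormick inequalities and \cref{eq:zs,eq:yz1,eq:yz2} confine to a box in every coordinate) and convex, it is completely characterized by its extreme points. It therefore suffices to show that every $p = (\xvar,u,y,z,t) \in \ext\Of{R^2}$ lies in $T$, \ie satisfies the \nonconvex equation $u = \xvar t$. Once this is shown, every point of $R^2$ is a convex combination of points of $T$, so $R^2 \subseteq \conv\Of{T}$; since each inequality defining $R^2$ is valid and convex for $T$ we also have $\conv\Of{T} \subseteq R^2$, and equality follows.

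To establish $u = \xvar t$ at an arbitrary extreme point, I would split into boundary and interior cases as in \cref{thm:case1}. \Cref{easycases}, which is proved for a general sign regime and hence applies verbatim here, dispatches the case in which $\xvar \in \zeroOne$ or $t \in \{\ugamma,\ogamma\}$. It then remains only to treat extreme points with $0 < \xvar < 1$ and $\ugamma < t < \ogamma$. For these I would invoke \cref{c2:uless} to exclude $u < \xvar t$ and \cref{c2:umore} to exclude $u > \xvar t$; together with equality on the boundary, this forces $u = \xvar t$ at every extreme point and completes the proof.

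The one genuine difference from \cref{thm:case1} is the sign regime $\ugamma < \ogamma < 0$, which dictates which inequalities define $R^2$. Because $\ogamma < 0$, the nonlinear inequality \cref{eq:nl1} becomes redundant (indeed strict, as observed right after its derivation) and is dropped; in compensation the linear inequality \cref{c2:lin} becomes non-redundant --- precisely the regime $\ogamma \le 0$ in which it is not implied --- and must carry part of the argument. Accordingly, the two interior-case lemmas \cref{c2:uless,c2:umore} have to certify non-extremality using the defining system of $R^2$, namely the McCormick inequalities together with \cref{eq:nl2,c2:lin}, rather than the system used for $R^1$.

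I expect the real work to sit inside these two lemmas, not in the top-level case split. The standard mechanism is: given an interior candidate with $u \neq \xvar t$, exhibit a direction $d$ such that both $p + \varepsilon d$ and $p - \varepsilon d$ remain in $R^2$ for small $\varepsilon > 0$, contradicting extremality. The delicate part is choosing $d$ so that it respects every inequality that is tight at $p$; since \cref{eq:nl2} is nonlinear and active only where $y > 0$, and \cref{c2:lin} couples $y, u, \xvar,$ and $t$, one must verify feasibility of the perturbed points to second order against \cref{eq:nl2} while keeping the linear constraints satisfied. Identifying such a direction --- likely a perturbation of $(t,u)$ coordinated with $y$ and $\xvar$ so as to move off the surface $u = \xvar t$ in both directions --- is where I anticipate the main obstacle.
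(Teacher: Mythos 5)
Your proposal is correct and follows essentially the same route as the paper: reduce to showing $u=\xvar t$ at every extreme point of the bounded convex set $R^2$, dispatch the boundary cases $\xvar\in\zeroOne$ or $t\in\{\ugamma,\ogamma\}$ via \cref{easycases}, and exclude $u<\xvar t$ and $u>\xvar t$ in the interior via \cref{c2:uless,c2:umore}. Your closing remarks about where the real work lies (the midpoint/perturbation construction inside those two lemmas, with \cref{c2:lin} replacing \cref{eq:nl1} in this sign regime) accurately describe what the paper does there.
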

\begin{proof}
  Based on the same argument as in the proof of \cref{thm:case1}, we
  show that that $u=\xvar t$ for all $p = (\xvar,u,y,z,t) \in \ext\Of{R^2}$.
  \Cref{easycases} shows that for $\xvar\in\zeroOne$ or
  $t\in\{\ugamma, \ogamma\}$. For $0 < \xvar < 1$ and
  $\ugamma < t < \ogamma$ and under the assumptions of this theorem
  \cref{c2:uless,c2:umore} show that points with $u < \xvar t$ and
  $u > \xvar t$, respectively, cannot be extreme.
\end{proof}

\begin{theorem}
  \label{thm:case3}
  Assume $0 < \ugamma < \ogamma$ and $\uGamma < 0 < \oGamma$. Then  $\conv(T) = R^3$.
\end{theorem}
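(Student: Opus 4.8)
The plan is to prove \cref{thm:case3} by the same extreme-point argument used for \cref{thm:case1,thm:case2}. Since $R^3 \subseteq R^0$ and the McCormick inequalities \cref{eq:us1,eq:us2,eq:ust1,eq:ust2} already force $\xvar \in [0,1]$ and $t \in [\ugamma,\ogamma]$ (whence $u$ is bounded, and then \cref{eq:zs,eq:yz1,eq:yz2} bound $z$ and $y$), the set $R^3$ is bounded and convex, so it equals the convex hull of its extreme points. Every inequality defining $R^3$ is valid for $T$, giving $\conv(T) \subseteq R^3$; it therefore suffices to show that each $p = (\xvar,u,y,z,t) \in \ext\Of{R^3}$ satisfies the bilinear relation $u = \xvar t$, for then $p \in T$, every point of $R^3$ is a convex combination of points of $T$, and $R^3 \subseteq \conv(T)$ follows.

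I would first discharge the boundary with \cref{easycases}, which forces $u = \xvar t$ whenever $\xvar \in \zeroOne$ or $t \in \{\ugamma,\ogamma\}$, independently of the case at hand. This reduces the problem to extreme points with $0 < \xvar < 1$ and $\ugamma < t < \ogamma$. The distinctive feature of case~3 is the hypothesis $0 < \ugamma < \ogamma$: both endpoints of the $t$-range are strictly positive, so on the reduced region $t > 0$ and, on the bilinear surface, $u = \xvar t > 0$. This positivity is exactly why the inequality \cref{eq:nl2}, whose validity in \cref{thm:yikes} requires $\ugamma < 0$, is unavailable here and is replaced in the definition of $R^3$ by the linear cut \cref{c3:lin}, which is redundant precisely when $\ugamma < 0$ and hence becomes essential now.

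For the interior extreme points I would argue by contradiction: assuming $u \neq \xvar t$, I would exhibit a nonzero direction $d$ with $p \pm \epsilon d \in R^3$ for all sufficiently small $\epsilon > 0$, so that $p$ is the midpoint of a genuine segment of $R^3$ and cannot be extreme. Following the roadmap in \cref{tab:pooling:pagenumbers}, the two regimes $u < \xvar t$ and $u > \xvar t$ are separated between \cref{c3:uless} and \cref{c3:umore}, in exact analogy with \cref{uless,umore} for case~1. In each lemma the task is to read off the constraints of $R^3$ binding at $p$ and to verify that, when $u \neq \xvar t$, they fail to isolate a zero-dimensional face, so that a compensating segment through $p$ can be constructed inside $R^3$.

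The main obstacle is the nonpolyhedral inequality \cref{eq:nl1}. Because it is a genuine second-order-cone constraint, $R^3$ is not a polyhedron and an extreme point may lie on the curved boundary of \cref{eq:nl1}, where the elementary count of active linear constraints no longer certifies extremality: a two-sided perturbation there must be tangent to the cone and remain feasible to second order, coupling the admissible increments in $\xvar$, $u$, and $t$. The technical core of \cref{c3:uless,c3:umore} is thus to show that, under the sign pattern $0 < \ugamma < \ogamma$ and $\uGamma < 0 < \oGamma$, a feasible segment through $p$ can always be built when $u \neq \xvar t$ --- equivalently, that the linear cut \cref{c3:lin}, together with the base constraints \cref{eq:yu,eq:zs,eq:yz1,eq:yz2} and the McCormick facets \cref{eq:us1,eq:us2,eq:ust1,eq:ust2}, leaves enough freedom off the bilinear surface. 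I expect the regime $u > \xvar t$ to be the harder one, since it is there that \cref{eq:nl1} tends to be binding, so that \cref{c3:umore} must lean on \cref{c3:lin} to supply the extra binding relation that ultimately pins $p$ onto $u = \xvar t$; confirming this alignment between the cone \cref{eq:nl1} and the cut \cref{c3:lin} is the calculation I expect to dominate the proof.
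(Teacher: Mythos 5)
Your proposal is correct and follows essentially the same route as the paper: reduce to showing every extreme point of $R^3$ satisfies $u=\xvar t$, dispatch the boundary via \cref{easycases}, and split the interior into $u<\xvar t$ and $u>\xvar t$ handled by \cref{c3:uless,c3:umore}, with \cref{c3:lin} replacing the now-invalid \cref{eq:nl2}. The only small mismatch is your guess about where the work lies: in the paper the $u>\xvar t$ half is the easy one (\cref{c3:umore} just shows \cref{c3:lin} is strict and reduces to \cref{umore}), while the detailed four-case perturbation analysis lands in \cref{c3:uless} for $u<\xvar t$.
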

\begin{proof}
  The proof is analogous to the proofs of \cref{thm:case1,thm:case2},
  but in this case \cref{c3:uless,c3:umore} show that
  points with $u < \xvar t$ and $u > \xvar t$, respectively, cannot be extreme.
\end{proof}

\subsection{Preliminary Results}
\label{sec:proofs}

In the following, for different assumptions on the sign of
$\ugamma$ and $\ogamma$, we demonstrate that if $p=(\xvar,u,y,z,t)$ has
either $u > \xvar t$ or $u < \xvar t$, then $p$ is not an extreme point of $\conv\Of{T}$. This is accomplished by considering different cases
and in each case, we provide two distinct points which depend on a
parameter $\epsilon > 0$, denoted $p_i^{\epsilon}$, $i=1,2$, which
satisfy $p = (1/2)p_1^{\epsilon} + (1/2)p_2^{\epsilon}$. Furthermore,
the points $p_i^{\epsilon}$ are defined such that $p_i^{\epsilon}
\rightarrow p$ as $\epsilon \rightarrow 0$. The points are then shown
to be in the given relaxation for $\epsilon > 0$ small enough,
providing a proof that $p$ is not an extreme point of the relaxation.
To show the points are in a given relaxation for $\epsilon > 0$ small
enough, for each inequality defining the relaxation we either directly
show the points satisfy the inequality, or else we show that the point
$p$ satisfies the inequality with strict inequality. In the latter
case, the following lemma ensures that both points $p_1^{\epsilon}$
and $p_2^{\epsilon}$ satisfy the constraint if $\epsilon$ is small
enough.
\begin{lemma}
  \label{lem:convergence}
  Let $p^{\epsilon}: \reals_+ \rightarrow \reals^n$  with
  %\begin{equation*}
  $  \lim_{\epsilon\rightarrow 0} p^{\epsilon} = p$
  %\end{equation*}
  for some $p\in\reals^n$. Suppose $ap < b$ for $a \in \reals^n$, $b
  \in \reals$. Then there exists an
  $\hat{\epsilon} > 0$ such that
  \begin{equation*}
    ap^\epsilon < b \qquad \fa \epsilon < \hat{\epsilon}.
  \end{equation*}
\end{lemma}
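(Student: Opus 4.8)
The plan is to prove this as a routine consequence of the continuity of linear functionals together with the definition of convergence, exploiting the fact that the hypothesis $ap < b$ gives us a strictly positive ``gap'' to work with. The key observation is that the map $x \mapsto ax$ from $\reals^n$ to $\reals$ is Lipschitz with constant $\lVert a \rVert$, so convergence of $p^\epsilon$ to $p$ immediately forces $a p^\epsilon$ to converge to $a p$, and a strict inequality at the limit must then persist for all sufficiently small $\epsilon$.

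Concretely, I would proceed as follows. First, if $a = 0$ the claim is trivial since $ap^\epsilon = 0 = ap < b$ for every $\epsilon$, so assume $a \neq 0$ and set
\begin{equation*}
  \delta \defeq b - ap > 0,
\end{equation*}
which is positive by hypothesis. Next, I would bound the deviation of $ap^\epsilon$ from its limit using the Cauchy--Schwarz inequality:
\begin{equation*}
  \lvert ap^\epsilon - ap \rvert = \lvert a(p^\epsilon - p) \rvert \leq \lVert a \rVert \, \lVert p^\epsilon - p \rVert .
\end{equation*}
Since $\lim_{\epsilon \rightarrow 0} p^\epsilon = p$, by definition of the limit there exists $\hat{\epsilon} > 0$ such that $\lVert p^\epsilon - p \rVert < \delta / \lVert a \rVert$ for all $\epsilon < \hat{\epsilon}$. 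Combining these two facts yields $\lvert ap^\epsilon - ap \rvert < \delta$ for all $\epsilon < \hat{\epsilon}$, and hence
\begin{equation*}
  ap^\epsilon = ap + (ap^\epsilon - ap) < ap + \delta = b,
\end{equation*}
which is exactly the desired conclusion.

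There is no genuine obstacle in this argument: it is a standard ``strict inequalities are preserved under small perturbations'' statement, and its only role in the paper is to underpin the perturbation scheme used in the extreme-point arguments (where a point $p$ satisfying a defining inequality strictly guarantees that nearby points $p_1^\epsilon, p_2^\epsilon$ also satisfy it). The only minor care points are handling the degenerate case $a = 0$ separately so that we may divide by $\lVert a \rVert$, and being explicit that the convergence hypothesis is precisely what supplies the threshold $\hat{\epsilon}$ once the target radius $\delta / \lVert a \rVert$ is fixed.
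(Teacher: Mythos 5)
Your proof is correct and is essentially the paper's argument spelled out in full: the paper simply notes that the claim follows from continuity of $f(x) = ax - b$, and your Cauchy--Schwarz/epsilon--delta computation (plus the trivial $a=0$ case) is exactly the verification of that continuity and the persistence of the strict inequality.
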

\begin{proof}
  Follows directly from continuity of the function $f\Of{x} = ax-b$.
\end{proof}

Throughout this section, for $\epsilon > 0$, we use the notation:
\begin{align*}
  \alphep_1 = 1 - \epsilon, \
  \alphep_2 = 1 + \epsilon & \qquad \text{and} \qquad
  \delta_1^{\epsilon}  = \epsilon, \
  \delta_2^{\epsilon}  = - \epsilon.
\end{align*}
Obviously,
$  \lim_{\epsilon \rightarrow 0} \alphep_i  = 1$  and
$  \lim_{\epsilon \rightarrow 0} \delta_i^{\epsilon} = 0$
for $i\in\{1,2\}$.

The series of Lemmas that prove \cref{thm:case1,thm:case2,thm:case3}
is started by \cref{easycases} which applies to all cases and tells us
that points on the boundaries of the domains of $\xvar$ and $t$ fulfill $u=\xvar t$.
\begin{lemma}
  \label{easycases}
  Let $p = (\xvar,u,y,z,t) \in R^0$. If $\xvar =0$, $\xvar =1$, $t = \ugamma$, or $t=\ogamma$, then $u=\xvar t$.
\end{lemma}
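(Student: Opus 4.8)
The plan is to recall the McCormick inequalities that define $R^0$ and observe that at the four boundary values of $\xvar$ and $t$, the relevant pair of McCormick inequalities collapses to force $u = \xvar t$ exactly. The key structural fact is that \cref{eq:us1,eq:us2,eq:ust1,eq:ust2} are precisely the convex/concave envelopes of the bilinear term $\xvar t$ over the box $[0,1]\times[\ugamma,\ogamma]$, and it is a standard property of the McCormick envelope that it is tight (the upper and lower envelopes coincide, equaling the true product) exactly on the boundary of the box. So the statement is really the assertion that McCormick is exact on the edges of its defining rectangle.

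Concretely, I would argue case by case. First take $\xvar = 0$: inequality \cref{eq:us1} gives $u \geq \ugamma \xvar = 0$ and \cref{eq:us2} gives $u \leq \ogamma \xvar = 0$, so $u = 0 = \xvar t$. Next take $\xvar = 1$: \cref{eq:ust1} reads $u - \ugamma \leq t - \ugamma$, i.e.\ $u \leq t$, while \cref{eq:ust2} reads $\ogamma - u \leq \ogamma - t$, i.e.\ $u \geq t$; together these force $u = t = \xvar t$. For $t = \ugamma$: from \cref{eq:us1} we have $u \geq \ugamma \xvar$ and from \cref{eq:ust1} we have $u - \ugamma \xvar \leq t - \ugamma = 0$, i.e.\ $u \leq \ugamma \xvar$, so $u = \ugamma \xvar = \xvar t$. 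Finally for $t = \ogamma$: \cref{eq:us2} gives $u \leq \ogamma \xvar$ and \cref{eq:ust2} gives $\ogamma \xvar - u \leq \ogamma - t = 0$, i.e.\ $u \geq \ogamma \xvar$, whence $u = \ogamma \xvar = \xvar t$.

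There is essentially no obstacle here: each of the four cases is a two-line deduction pairing one of the ``under'' McCormick inequalities with the matching ``over'' inequality, and the result follows by squeezing $u$ between two equal bounds. The only thing to be careful about is bookkeeping — selecting, in each case, the correct two of the four inequalities \cref{eq:us1,eq:us2,eq:ust1,eq:ust2} so that their right-hand sides coincide at the boundary value. I would present the four cases in a compact enumerated list (or as four short sentences) to keep the argument transparent, and I would note explicitly that membership in $R^0$ is all that is used, so the lemma applies verbatim to $R^1, R^2, R^3 \subseteq R^0$, which is why it serves as the common first step in \cref{thm:case1,thm:case2,thm:case3}.
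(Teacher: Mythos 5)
Your proof is correct and takes essentially the same approach as the paper: the paper's proof simply cites the standard fact (Al-Khayyal and Falk) that the McCormick inequalities \cref{eq:us1,eq:us2,eq:ust1,eq:ust2} are exact when $\xvar$ or $t$ is at a bound, and your four-case argument is just that fact written out explicitly, with each case pairing the correct two inequalities to squeeze $u$. The only difference is that you supply the elementary verification the paper delegates to a reference.
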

\begin{proof}
  This follows since \cref{eq:us1,eq:us2,eq:ust1,eq:ust2} are the McCormick inequalities for relaxing the constraint $u=\xvar t$ over
  $\xvar \in[0,1]$ and $t \in [\ugamma,\ogamma]$, and it is known (\eg \cite{alkhayyal.falk:83}) that if either of the variables are at its
  bound, then the McCormick inequalities ensure that $u=\xvar t$.
\end{proof}
As \cref{easycases} applies to all the cases we assume from now on
that $0 < \xvar < 1$ and $\ugamma < t < \ogamma$. We use the following two propositions in several places in this section.
\begin{proposition}
  \label{ustslack}
  Suppose $\uGamma < 0$. Let $p = (\xvar,u,y,z,t) \in R^0$ with $0 < \xvar < 1$ and $\ugamma < t < \ogamma$.
  \begin{enumerate}
  \item If $u < \xvar t$, then $p$ satisfies \cref{eq:us2,eq:ust1,eq:nl1} with strict inequality.
  \item If $u > \xvar t$, then $p$ satisfies \cref{eq:us1,eq:ust2} with strict inequality.
  \end{enumerate}
\end{proposition}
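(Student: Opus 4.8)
The plan is to exploit that a point lying strictly off the bilinear surface $u=\xvar t$, on a given side, forces strict slack in exactly those McCormick inequalities that are "binding from that side," while the strict interiority $0<\xvar<1$ and $\ugamma<t<\ogamma$ keeps the point away from the corner configurations where McCormick is tight. Part~1 concerns the two upper/overestimating inequalities \eqref{eq:us2} and \eqref{eq:ust1} together with the nonlinear inequality \eqref{eq:nl1}; part~2 concerns the two lower/underestimating inequalities \eqref{eq:us1} and \eqref{eq:ust2}. The only nontrivial step is \eqref{eq:nl1}; everything else is immediate from the strict position relative to $u=\xvar t$.

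First I would dispatch the linear McCormick inequalities by comparing each left-hand side with its value at $u=\xvar t$. In part~1, where $u<\xvar t$, inequality \eqref{eq:us2} is strict because
\[ \ogamma \xvar - u > \ogamma \xvar - \xvar t = \xvar(\ogamma - t) > 0, \]
using $\xvar>0$ and $t<\ogamma$, and \eqref{eq:ust1} is strict because
\[ u - \ugamma \xvar < \xvar t - \ugamma \xvar = \xvar(t - \ugamma) < t - \ugamma, \]
using $\xvar<1$ and $t>\ugamma$. Part~2 (where $u>\xvar t$) is the mirror image: \eqref{eq:us1} follows from $u-\ugamma\xvar > \xvar(t-\ugamma) > 0$, and \eqref{eq:ust2} from $\ogamma\xvar - u < \xvar(\ogamma-t) < \ogamma - t$.

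The main work, and the only place the hypothesis $\uGamma<0$ is needed, is showing \eqref{eq:nl1} is strict under $u<\xvar t$. Here I would reuse the aggregated inequality $u-\uGamma\xvar \le -\uGamma$, which was shown valid for $R^0$ in the proof of \cref{thm:ineq1} (by combining \eqref{eq:yu}, \eqref{eq:zs} scaled by $-\uGamma$, and \eqref{eq:yz2}). Writing $A = u-\uGamma\xvar$ and $B = u-\ugamma\xvar$, the target is $AB < -\uGamma\xvar(t-\ugamma)$, whose right-hand side is strictly positive because $\uGamma<0$, $\xvar>0$, and $t>\ugamma$. The three ingredients are $A \le -\uGamma$ (the aggregated inequality), $B \ge 0$ (from \eqref{eq:us1}, valid in $R^0$), and $B < \xvar(t-\ugamma)$ (exactly the estimate $u-\ugamma\xvar < \xvar t - \ugamma\xvar$ coming from $u<\xvar t$). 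I then split on the sign of $A$: if $A\le 0$ then $AB\le 0$, which lies below the strictly positive right-hand side; if $A>0$ then $AB \le (-\uGamma)B < (-\uGamma)\xvar(t-\ugamma)$, where the first inequality uses $A\le-\uGamma$ and $B\ge 0$ and the second uses $B<\xvar(t-\ugamma)$ together with $-\uGamma>0$. In either case the inequality is strict.

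The one subtlety to watch — and the reason I would split on the sign of $A$ rather than simply asserting $A\ge 0$ — is that the statement presumes no ordering between $\ugamma$ and $\uGamma$, so $A=u-\uGamma\xvar$ need not be nonnegative even though $B=u-\ugamma\xvar\ge 0$. The case split sidesteps this entirely and keeps the argument clean, so I expect no further obstacles beyond bookkeeping.
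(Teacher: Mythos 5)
Your proposal is correct and follows essentially the same route as the paper: identical one-line estimates for the four linear McCormick inequalities, and the same aggregated inequality $u-\uGamma\xvar\le-\uGamma$ combined with $u-\ugamma\xvar\ge 0$ and $u-\ugamma\xvar<\xvar(t-\ugamma)$ to get strictness of \cref{eq:nl1}. The only difference is your case split on the sign of $A=u-\uGamma\xvar$, which is harmless but unnecessary, since multiplying $A\le-\uGamma$ through by $B=u-\ugamma\xvar\ge 0$ already gives $AB\le-\uGamma B$ regardless of the sign of $A$, which is exactly what the paper does.
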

\begin{proof}
  1. Suppose $u < \xvar t$. Then, $\ogamma \xvar - u > \ogamma \xvar - \xvar t = \xvar (\ogamma - t) > 0$, and so $p$ satisfies \cref{eq:us2}
  with strict inequality.
  Next,
  \begin{equation}
    \label{tmpstrict}
    u - \ugamma \xvar < \xvar t - \ugamma \xvar = \xvar(t - \ugamma) < t - \ugamma
  \end{equation}
  as $\xvar < 1$ and $t > \ugamma$, and so \cref{eq:ust1} is satisfied by
  $p$ with strict inequality.
  To show that \cref{eq:nl1} is satisfied strictly, we aggregate \cref{eq:yu} with weight $1$, \cref{eq:zs} with weight $-\uGamma$, and \cref{eq:yz2} with
  weight $1$ and get
  \begin{align}
    u - \uGamma \xvar &\leq -\uGamma. \label{pv1}
  \end{align}
  As $u - \ugamma \xvar \geq 0$,
  \[ (u - \uGamma \xvar)(u - \ugamma \xvar) \leq -\uGamma (u - \ugamma \xvar) < -\uGamma \xvar(t - \ugamma) \]
  where the last inequality follows from \cref{tmpstrict} and $\uGamma < 0$,  and thus \cref{eq:nl1} is satisfied by $p$
  with strict inequality.

  2. Now suppose $u > \xvar t$.  Then,
  $u - \xvar\ugamma > \xvar t - \xvar\ugamma = \xvar(t-\ugamma) > 0$ and so $p$ satisfies \cref{eq:us1} with strict inequality. Next,
  \begin{equation}
    \label{tmpstrict2}
    \ogamma \xvar - u < \ogamma \xvar  - \xvar t = \xvar(\ogamma  - t) < \ogamma - t
  \end{equation}
  as $\xvar < 1$ and $t < \ogamma$, and so \cref{eq:ust2} is satisfied with strict inequality.
\end{proof}
\begin{proposition}
  \label{c1:ustslack}
  Suppose $\uGamma < 0$ and $\ugamma < 0$. Let $p = (\xvar,u,y,z,t) \in
  R^0$  with $0 < \xvar < 1$ and $\ugamma < t < \ogamma$.
  If $u > \xvar t$ and  $y > 0$, then $p$ satisfies \cref{eq:nl2} with strict inequality.
\end{proposition}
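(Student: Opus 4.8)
The plan is to exploit the fact that, once $\xvar$, $y$, and $t$ are held fixed, the left-hand side of \cref{eq:nl2} is a strictly decreasing function of $u$ on the relevant range, while its value at the ``diagonal'' point $u = \xvar t$ is already bounded above by the right-hand side via the computation in the proof of \cref{thm:yikes}. Since the hypotheses give $u > \xvar t$ strictly, strict monotonicity then upgrades that bound to a strict one. (I take $\oGamma > 0$ throughout, the regime in which \cref{eq:nl2} is a valid inequality; cf.\ \cref{thm:yikes}.)

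First I would record that the left-hand side is well defined and, in particular, that the denominator is strictly positive. From $u > \xvar t$ and $t > \ugamma$ we get $u - \ugamma\xvar > \xvar(t-\ugamma) > 0$, and since $y > 0$ the quantity $y + u - \ugamma\xvar$ is strictly positive; so $u$ ranges over an interval on which the expression is smooth. Writing $v \defeq u - \ugamma\xvar$, the fractional term equals $\ugamma y\bigl(1 - y/(y+v)\bigr)$.

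Second, I would establish strict monotonicity in $u$. As $u$ increases, $v$ increases and $y/(y+v)$ strictly decreases (because $y>0$), so $1 - y/(y+v)$ strictly increases; since $\ugamma < 0$, the fractional term $\ugamma y\bigl(1 - y/(y+v)\bigr)$ is strictly decreasing in $u$. The term $\oGamma(\ogamma\xvar - u)$ is also (weakly) decreasing since $\oGamma > 0$, and $(\ogamma-\ugamma)y$ is constant. Hence the whole left-hand side is strictly decreasing on $\{u : u - \ugamma\xvar > 0\}$, which by the first step contains the interval $[\xvar t, u]$.

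Third, I would bound the left-hand side at $u = \xvar t$. Substituting $u = \xvar t$ into the left-hand side (keeping the actual values of $\xvar, y, t$) produces exactly the expression manipulated in the proof of \cref{thm:yikes}, and that chain of inequalities relies only on two facts: the bound $y \le -\xvar t$ and inequality \cref{basic}, namely $y + \oGamma\xvar \le \oGamma$. Both hold here: \cref{basic} is valid for $R^0$ and $p \in R^0$, while $y \le -\xvar t$ follows from \cref{eq:yu} together with $u > \xvar t$, which give $y \le -u < -\xvar t$. Using also $\ogamma > t$, that computation shows the value at $u=\xvar t$ is at most $\oGamma(\ogamma - t)$. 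Combining with the strict monotonicity of the second step and $u > \xvar t$ yields that the value at the actual $u$ is strictly below $\oGamma(\ogamma-t)$, which is the desired strict inequality. The only delicate point is this third step: one must observe that the proof of \cref{thm:yikes}, although written for points of $T$ where $u = \xvar t$ genuinely holds, in fact uses only $y \le -\xvar t$ and \cref{basic}, both of which persist when $u > \xvar t$; I expect this reinterpretation to be the main (and only mild) obstacle.
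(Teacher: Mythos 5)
Your proof is correct, but it takes a somewhat different route from the paper's. The paper works term by term at the actual point $p$: it uses $u > \xvar t$ and $\ugamma y < 0$ to strictly bound the numerator $\ugamma y(u-\ugamma\xvar)$ by $\ugamma y\xvar(t-\ugamma)$, then bounds the denominator $y+u-\ugamma\xvar$ above by $-\ugamma\xvar$ via $y+u\le 0$, obtaining $-y(t-\ugamma)$ for the fractional term, and finishes with the same final chain as in the proof of \cref{thm:yikes}. You instead show the whole left-hand side of \cref{eq:nl2} is strictly decreasing in $u$ on the region where the denominator is positive (which contains $[\xvar t,u]$), reduce to the evaluation at $u=\xvar t$, and observe that the computation in the proof of \cref{thm:yikes} applies there because it uses only $y\le -\xvar t$ (true here since $y\le -u<-\xvar t$) and \cref{basic}, both valid on $R^0$. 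Your version is more modular: it isolates exactly where strictness enters (monotonicity plus $u>\xvar t$) and reuses the earlier computation instead of repeating it, at the cost of having to justify that the computation in \cref{thm:yikes} is really an inequality about any $(\xvar,y,t)$ satisfying those two side conditions rather than about points of $T$ -- which you correctly identify and discharge. Both arguments implicitly require $\oGamma>0$ (through \cref{basic}); making that explicit, as you do, is harmless since the proposition is only invoked under $\uGamma<0<\oGamma$.
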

\begin{proof}
  Then, as $\ugamma y < 0$, we have
  \begin{equation*}
    \ugamma y ( u - \ugamma \xvar) < \ugamma y (\xvar t - \ugamma \xvar) = \ugamma y \xvar (t - \ugamma) .
  \end{equation*}
  Thus, as $y + u - \ugamma \xvar > 0$,
  \begin{equation*}
    \frac{\ugamma y ( u - \ugamma \xvar)}{y + u - \ugamma \xvar} < \frac{\ugamma y \xvar (t - \ugamma) }{y + u - \ugamma \xvar}
    \leq \frac{\ugamma y \xvar (t - \ugamma) }{-\ugamma \xvar} = -y(t - \ugamma)
  \end{equation*}
  where the last inequality follows from $y+u \leq 0$ and $\ugamma y \xvar (t - \ugamma) < 0$.
  Thus,
  \begin{align*}
    (\ogamma - \ugamma)y + \oGamma(\ugamma \xvar - u) + \frac{\ugamma y (u - \ugamma
      \xvar)}{y + u - \ugamma \xvar} & < (\ogamma - \ugamma)y   + \oGamma(\ugamma \xvar - u) - y  (t - \ugamma) \\
    & < y (\ogamma - t) + \oGamma \xvar (\ogamma - t) \\
    & = (y + \oGamma \xvar)(\ogamma - t) \leq \oGamma (\ogamma - t)
  \end{align*}
  where the last inequality follows from $\ogamma - t > 0$ and the
  fact that aggregating \cref{eq:zs} with weight $\oGamma$ and
  \cref{eq:yz1} yields $y + \oGamma \xvar \leq \oGamma$.
\end{proof}

\subsection{Proof of \cref{thm:case1}}
\label{sec:prov_case1}

We now state and prove the two main lemmas that support the proof of \cref{thm:case1}.
\begin{lemma}
  \label{uless}
  Suppose $\uGamma < 0 < \oGamma$. Let $p = (\xvar,u,y,z,t) \in R^1$ with $0 < \xvar < 1$ and $\ugamma < t < \ogamma$. If $u < \xvar t$, then $p$ is not an extreme point of $R^1$.
\end{lemma}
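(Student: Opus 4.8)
The plan is to exhibit, for small $\epsilon>0$, two distinct points $p_1^\epsilon,p_2^\epsilon\in R^1$ with $p=\tfrac12 p_1^\epsilon+\tfrac12 p_2^\epsilon$ and $p_i^\epsilon\to p$, contradicting extremality of $p$. The starting observation is \cref{ustslack}(1): since $u<\xvar t$, the point $p$ satisfies \cref{eq:us2,eq:ust1,eq:nl1} \emph{strictly}. By \cref{lem:convergence}, any family $p_i^\epsilon\to p$ then satisfies these three for $\epsilon$ small, so I only need to design the perturbation so that the remaining constraints \cref{eq:us1,eq:ust2,eq:yu,eq:zs,eq:yz1,eq:yz2} and, when $y>0$, the nonlinear inequality \cref{eq:nl2} are preserved.

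First I would try the cheapest perturbation: hold $\xvar,u,y,z$ fixed and move only $t$, setting $t_i=t+\delta_i^\epsilon$. Then \cref{eq:us1,eq:us2,eq:yu,eq:zs,eq:yz1,eq:yz2} are literally unchanged (none involves $t$), \cref{eq:ust1,eq:nl1} are strict at $p$, and the only constraints whose slack can shrink are \cref{eq:ust2} and, for $y>0$, \cref{eq:nl2}, both of which carry $t$ only on a right-hand side that decreases as $t$ grows. Hence if \cref{eq:ust2,eq:nl2} hold strictly at $p$ (in particular whenever $y\le 0$, when \cref{eq:nl2} is vacuous), \cref{lem:convergence} finishes the argument immediately. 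The real work is confined to the cases in which an upper constraint on $t$, namely \cref{eq:ust2} or \cref{eq:nl2}, is tight at $p$.

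To handle a tight \cref{eq:ust2}, I would couple the move in $t$ with a multiplicative move in $\xvar$ (precisely what the $\alphep_i$/$\delta_i^\epsilon$ notation is set up for): keeping $u,y,z$ fixed and choosing $\xvar_i$ so that $\ogamma\xvar_i-u=\ogamma-t_i$ remains an equality forces $\oGamma(\ogamma\xvar_i-u)=\oGamma(\ogamma-t_i)$, which cancels the awkward terms of \cref{eq:nl2} and simultaneously keeps \cref{eq:ust2} active, leaving only \cref{eq:us1,eq:zs} and the residual ($t$-free) part of \cref{eq:nl2} to check. If \cref{eq:us1} or \cref{eq:zs} is also tight, the move in $\xvar$ must be compensated by a move in $u$ or $z$ (with $y$, or $y$ and $z$, adjusted so that \cref{eq:yu,eq:yz1,eq:yz2} stay satisfied), which is where the argument branches into its subcases. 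A structural fact that keeps this finite and shows $p$ is never fully pinned is that, for $\xvar<1$, the only way five of the linear constraints \cref{eq:us1,eq:ust2,eq:yu,eq:zs,eq:yz1,eq:yz2} can be simultaneously active is the configuration \cref{eq:us1,eq:ust2,eq:yu,eq:zs,eq:yz1} (the alternative using \cref{eq:yz2} forces $z<0$, and \cref{eq:yz1,eq:yz2} cannot both bind); a short computation shows this configuration forces $y=\oGamma(1-\xvar)>0$ and then \emph{violates} \cref{eq:nl2}, so it is not a point of $R^1$. Consequently at every genuine $p\in R^1$ with $u<\xvar t$ at most four linear constraints bind, leaving a nonzero direction to perturb along.

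I expect the main obstacle to be \cref{eq:nl2} when it is tight with $y>0$. Because it defines a \emph{curved} convex boundary, no straight-line direction can remain feasible on both sides of $p$, so a genuinely curved (product-form) perturbation is required, and the bulk of the bookkeeping is verifying that the chosen $\xvar_i,u_i$ (and, if needed, $y_i,z_i$) keep the fractional term $\ugamma y(u-\ugamma\xvar)/(y+u-\ugamma\xvar)$ on the correct side. Note that the analogue of \cref{c1:ustslack} is exactly what is unavailable here: for $u<\xvar t$ there is no a priori strictness of \cref{eq:nl2}, so the tight-\cref{eq:nl2} case cannot be absorbed into \cref{lem:convergence} and must be treated directly. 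This is the portion of the proof I would write last and check most carefully.
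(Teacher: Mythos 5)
Your overall strategy (write $p$ as the midpoint of two perturbed points, dispose of \cref{eq:us2,eq:ust1,eq:nl1} via \cref{ustslack} and \cref{lem:convergence}, and branch on which of the remaining constraints are tight) matches the paper's, but your case split is organized around tightness of \cref{eq:ust2} and \cref{eq:nl2} rather than, as in the paper, tightness of \cref{eq:yu,eq:zs,eq:yz1,eq:yz2}. Your first case (move only $t$) is fine. The problem is that the two cases carrying all of the difficulty are not actually proved: for tight \cref{eq:ust2} you say the argument ``branches into its subcases'' when \cref{eq:us1} or \cref{eq:zs} is also tight, without exhibiting or verifying the perturbations; and for \cref{eq:nl2} tight with $y>0$ you explicitly defer the verification. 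That last case is the crux of the lemma --- you correctly diagnose that neither \cref{c1:ustslack} nor \cref{lem:convergence} is available for it, so an explicit construction is unavoidable, and a proof that stops before constructing it is an outline, not a proof.

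The missing idea is the one the paper's argument is built around: the fractional term $g(y,v)=yv/(y+v)$ with $v=u-\ugamma\xvar$ is positively homogeneous of degree one, so if the perturbation scales $y$, $z$, and $u-\ugamma\xvar$ all by the same factor $\alphep_i$ (arranged in the paper's cases (a)--(c) by moving $(\xvar,u,t)$ toward a suitable corner such as $(1,\ugamma,\ugamma)$, or by scaling toward the origin), then the entire left-hand side of \cref{eq:nl2} becomes affine in $\epsilon$ along the perturbation and validity at $p$ --- even with equality --- transfers to $p_i^{\epsilon}$; no strictness of \cref{eq:nl2} is ever needed. In the one remaining configuration ($y+u=0$, $z+\xvar=1$, and \cref{eq:yz1} or \cref{eq:yz2} tight) the paper shows that tightness of \cref{eq:yz1} is incompatible with $u<\xvar t$ and \cref{eq:nl2}, forcing $y=\uGamma z<0$ so that \cref{eq:nl2} is vacuous for the perturbed points. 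Your ``structural fact'' does not substitute for this: it only counts active \emph{linear} constraints (and is not quite right as stated --- \cref{eq:yz1} and \cref{eq:yz2} do bind simultaneously when $y=z=0$, and the five-constraint configuration using \cref{eq:yz2} gives $z=1-\xvar>0$, not $z<0$), and a free linear direction is of no use at a point where the curved constraint \cref{eq:nl2} is active unless that direction happens to lie along the degenerate (homogeneity) direction of $h$, which is exactly what must be checked against the other tight constraints and is the content you have omitted.
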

\begin{proof}
  We consider four cases: (a) $y+u < 0$, (b) $z+\xvar < 1$, (c) $\uGamma z - y < 0$ and $y - \oGamma z < 0$, and (d)
  $z+\xvar=1$, $y+u=0$, and either $\uGamma z - y = 0$ or $y - \oGamma z = 0$.
  In each of them we define a series of points $p_i^{\epsilon} =
  (\xvar_i^{\epsilon}, u_i^{\epsilon}, y_i^\epsilon, z_i^\epsilon,
  t_i^{\epsilon})$ for $i \in \{1,2\}$ that depends on $\epsilon > 0$
  with $p = 0.5 ( p_1^{\epsilon} + p_2^{\epsilon} )$ and which
  satisfy  $\lim_{\epsilon \rightarrow 0} p_i^{\epsilon} = p$.
  We then show that both $p_i^{\epsilon}$ are in $R^1$ and thus
  $p$ is not an extreme point of $R^1$. To show $p_i^{\epsilon} \in
  R^1$, we need to ensure that it satisfies all inequalities defining
  $R^1$. For those inequalities that satisfied strictly at $p$,
  \cref{lem:convergence} ensures that this is the case. For the
  remaining inequalities, we show it directly.

  By \cref{ustslack}, $u < \xvar t$ implies that $p$ satisfies
  \cref{eq:us2,eq:ust1,eq:nl1} with strict
  inequality. %In addition, when $y \leq 0$, \eqref{eq:nl2} is not enforced.
  It remains to show that the points $p_i^{\epsilon}$ satisfy
  \cref{eq:yu,eq:zs,eq:yz1,eq:yz2,eq:us1,eq:ust2,eq:nl2} for
  $\epsilon > 0$ small enough. Note that $z \geq 0$ is implied by
  \cref{eq:yz1,eq:yz2} and does not have to be proved explicitly.

  \proofcase{a}{$y+u < 0$}
  For $\epsilon > 0$, define
  $p_i^{\epsilon} = (\xvar_i^{\epsilon}, u_i^{\epsilon}, y_i^\epsilon, z_i^\epsilon, t_i^{\epsilon})$
  where, for $i=1,2$,
  \begin{alignat*}{5}
    \xvar_i^{\epsilon} & \eqdef (1 - \alphep_i) + \alphep_i \xvar, \ \ &
    u_i^{\epsilon}   & \eqdef \ugamma (1 - \alphep_i) + \alphep_i u, \ \ &
    y_i^{\epsilon}   & \eqdef \alphep_i y, \ \ \\
    z_i^{\epsilon} & \eqdef \alphep_i z, \ \ &
    t_i^{\epsilon} & \eqdef (1-\alphep_i) \ugamma + \alphep_i t .
  \end{alignat*}
  Since $\alphep_i$ converge to 1, it is clear that
  $p_i^{\epsilon}$ converges to $p$ and \cref{lem:convergence}
  can be applied.
  In the following we check that $p_i^{\epsilon}$ satisfies the remaining inequalities.
  \begin{description}[nosep,leftmargin=2.5em]
  \item[\cref{eq:yu}] Satisfied strictly by $p$ by the assumption of
    this case.
  \item[\cref{eq:zs,eq:yz1,eq:yz2}] Easily checked directly.
  \item[\cref{eq:us1}] Follows from
    \[ u_i^{\epsilon} - \ugamma \xvar_i^{\epsilon} = \ugamma (1-\alphep_i) + \alphep_i u - \ugamma((1-\alphep_i) + \alphep_i \xvar)
    = \alphep_i( u - \ugamma \xvar) \geq 0 . \]
  \item [\cref{eq:ust2}] Follows from
    \begin{align*}
      \ogamma \xvar_i^{\epsilon} - u_i^{\epsilon} &= \ogamma ((1-\alphep_i) + \alphep_i \xvar) - \ugamma(1 - \alphep_i) - \alphep_i
      u \\
      &= (1 - \alphep_i)(\ogamma - \ugamma) + \alphep_i (\ogamma \xvar - u) \\
      &\leq (1 - \alphep_i)(\ogamma - \ugamma) + \alphep_i (\ogamma - t)
      = \ogamma - (1 - \alphep_i)\ugamma  - \alphep_i t  = \ogamma - t_i^{\epsilon} .
    \end{align*}
  \item[\cref{eq:nl2}] If $y > 0$, then also $y_{i}^{\epsilon} > 0$,
    and using $u_i^{\epsilon} - \ugamma \xvar_i^{\epsilon} =\alphep_i( u -
    \ugamma \xvar)$ and $\ogamma \xvar_i^{\epsilon} - u_i^{\epsilon} = (1 -
    \alphep_i)(\ogamma - \ugamma) + \alphep_i (\ogamma \xvar - u)$, the
    left-hand-side of \cref{eq:nl2} evaluated at $p_i^{\epsilon}$
    equals:
  \begin{align*}
    & \alphep_i \Bigl( (\ugamma - \ogamma) y + \oGamma (\ogamma \xvar - u)
    + \frac{ \ugamma y (u - \ugamma \xvar)}{y + u + \ugamma
      \xvar} \Bigr) + \oGamma(1 - \alphep_i)(\ugamma - \ogamma) \\
    & \leq \alphep_i \oGamma (\ogamma - t) +
    \oGamma(1-\alphep_i)(\ogamma - \ugamma) = \oGamma (\ogamma -
    \alphep_i t - (1 - \alphep_i) \ugamma) = \oGamma (\ogamma -
    t_i^{\epsilon})
  \end{align*}
  and hence \cref{eq:nl2} is satisfied by $p_i^\epsilon$ for $i=1,2$
  and any $\epsilon \in (0,1)$ when $y > 0$. On the other hand, if $y
  \leq 0$, then $y_{i}^{\epsilon} \leq 0$, and $p_i^{\epsilon}$ is not
  required to satisfy \cref{eq:nl2} for $i=1,2$.
  \end{description}

  \proofcase{b}{$z+\xvar<1$}
  For $\epsilon > 0$, define
  $p_i^{\epsilon} = (\xvar_i^{\epsilon}, u_i^{\epsilon}, y_i^\epsilon, z_i^\epsilon, t_i^{\epsilon})$
  where, for $i=1,2$,
  \begin{alignat*}{5}
    \xvar_i^{\epsilon} & \eqdef \alphep_i \xvar, \ \ &
    u_i^{\epsilon}     & \eqdef \alphep_i u, \ \ &
    y_i^{\epsilon}     & \eqdef \alphep_i y, \ \ &
    z_i^{\epsilon}     & \eqdef \alphep_i z, \ \ &
    t_i^{\epsilon}     & \eqdef  \alphep_i t + (1-\alphep_i)\ogamma .
  \end{alignat*}
  \begin{description}[nosep,leftmargin=2.5em]
  \item[\cref{eq:yu}] Easily checked directly.
  \item[\cref{eq:zs}] Satisfied strictly by $p$ by the assumption of
    this case.
  \item[\cref{eq:yz1,eq:yz2,eq:us1}] Easily checked directly.
  \item[\cref{eq:ust2}] Follows from
    \[ \ogamma \xvar_i^{\epsilon} - u_i^{\epsilon} = \alphep_i (\ogamma \xvar
    - u) \leq \alphep_i (\ogamma - t) = \alphep_i \ogamma - \alphep_i
    t = \alphep_i \ogamma - (1-\alphep_i)\ogamma - t_i^{\epsilon} =
    \ogamma - t_i^{\epsilon} .
    \]
  \item[\cref{eq:nl2}] If $y > 0$, then also $y_{i}^{\epsilon} > 0$,
    and the left-hand-side of \cref{eq:nl2} evaluated at
    $p_i^{\epsilon}$ equals:
    \begin{align*}
      \alphep_i \Bigl( (\ugamma - \ogamma) y + \oGamma (\ogamma \xvar - u)
      + \frac{ \ugamma y (u - \ugamma \xvar)}{y + u + \ugamma \xvar} \Bigr) &
      \leq
      \alphep_i \oGamma (\ogamma - t) \\
      & = \oGamma ( \alphep_i \ogamma -
      t_i^{\epsilon} + (1-\alphep_i)\ogamma)
       = \oGamma (\ogamma - t_i^{\epsilon})
    \end{align*}
    and hence \cref{eq:nl2} is satisfied by $p_i^\epsilon$ for $i=1,2$ and any $\epsilon \in (0,1)$ when $y > 0$. On the
    other hand, if $y \leq 0$, then $y_{i}^{\epsilon} \leq 0$, and $p_i^{\epsilon}$ is not required to satisfy
    \cref{eq:nl2} for $i=1,2$.
  \end{description}

  \proofcase{c}{$\uGamma z - y < 0$ and $y - \oGamma z < 0$}
  For $\epsilon > 0$, define
  $p_i^{\epsilon} = (\xvar_i^{\epsilon}, u_i^{\epsilon}, y_i^\epsilon, z_i^\epsilon, t_i^{\epsilon})$
  where
  \begin{alignat*}{5}
    \xvar_i^{\epsilon} & \eqdef \alphep_i \xvar, \ \ &
    u_i^{\epsilon} & \eqdef \alphep_i u, \ \ &
    y_i^{\epsilon} & \eqdef \alphep_i y, \ \ &
    z_i^{\epsilon} & \eqdef (1 - \alphep_i)+\alphep_i z, \ \ &
    t_i^{\epsilon} & \eqdef  \alphep_i t + (1-\alphep_i)\ogamma,
  \end{alignat*}
  for $i=1,2$.
  \begin{description}[nosep,leftmargin=2.5em]
  \item[\cref{eq:yu}] Easily checked directly.
  \item[\cref{eq:zs}] Follows from
    \[ z_i^{\epsilon} + \xvar_i^{\epsilon} = (1-\alphep_i) + \alphep_i z +
    \alphep_i \xvar = (1-\alphep_i) + \alphep_i (z + \xvar) \leq 1 . \]
  \item[\cref{eq:yz1}, \cref{eq:yz2}] Satisfied strictly by $p$ by the assumption of
    this case.
  \item[\cref{eq:us1}] Easily checked directly.
  \item[\cref{eq:ust2}, \cref{eq:nl2}] As the definitions of
    $t_i^{\epsilon}, y_i^{\epsilon}, \xvar_i^{\epsilon}$, and
    $u_i^{\epsilon}$ are the same as in Case (b), it follows from the
    arguments in that case that $p_i^{\epsilon}$ satisfies
    \cref{eq:ust2,eq:nl2} for $i=1,2$ and any $\epsilon
    \in (0,1)$.
  \end{description}

  \proofcase{d}{$z+\xvar=1$, $y+u=0$, and either $\uGamma z - y = 0$ or
    $y - \oGamma z = 0$}
  For $\epsilon > 0$, define $p_i^{\epsilon} = (\xvar_i^{\epsilon},
  u_i^{\epsilon}, y_i^\epsilon, z_i^\epsilon, t_i^{\epsilon})$ where, for $i=1,2$,
  \begin{alignat*}{5}
    \xvar^{\epsilon}_i & \eqdef \xvar - \delta^{\epsilon}_i, \ \  &
    u^{\epsilon}_i & \eqdef u - \uGamma \delta^{\epsilon}_i, \ \ &
    y^{\epsilon}_i & \eqdef y + \uGamma \delta^{\epsilon}_i, \ \ &
    z^{\epsilon}_i & \eqdef z + \delta^{\epsilon}_i, \ \ &
    t^{\epsilon}_i & \eqdef t + \delta^{\epsilon}_i (\ogamma - \uGamma) .
  \end{alignat*}

  \begin{description}[nosep,leftmargin=2.5em]
  \item[\cref{eq:yu}, \cref{eq:zs}] Easily checked directly.
  \item[\cref{eq:yz1}] We show that when $z+\xvar=1$ and $y+u=0$, then $y
    - \oGamma z < 0$. Indeed, if $y - \oGamma z = 0$, then as $z+\xvar=1$,
    it follows that
    \begin{equation}
      \label{yinterm}
      y + \oGamma \xvar = \oGamma
    \end{equation}
    Then, using $y = \oGamma z > 0$, and evaluating $p$ in the left-hand-side of \cref{eq:nl2} yields
    \begin{alignat*}{2}
      &(\ogamma - \ugamma) y + \oGamma (\ogamma \xvar - u) + \frac{\ugamma y (u - \ugamma \xvar)}{y + u - \ugamma \xvar}  \\
      &=  (\ogamma - \ugamma) y + \oGamma (\ogamma \xvar - u) + \frac{\ugamma y (u - \ugamma \xvar)}{- \ugamma \xvar}  &\quad&
      \mathrm{since } \ y+u = 0 \\
      &= \ogamma (y + \oGamma \xvar) - u( \oGamma  + y/\xvar) \\
      &> \ogamma (y + \oGamma \xvar) - \xvar t (\oGamma + y/\xvar) & & \mathrm{since }\  u < \xvar t \ \mathrm{ and } \  \oGamma + y/\xvar > 0 \\
      &=  (\ogamma - t)(y + \oGamma \xvar) = (\ogamma - t)\oGamma & & \text{by \cref{yinterm}} .
    \end{alignat*}
    Thus, $p$ violates \cref{eq:nl2} and hence $p$ fulfills
    \cref{eq:yz1} with strict inequality. Furthermore, due to the
    assumptions of this case, we can assume $\uGamma z - y = 0$.
  \item[\cref{eq:yz2}] Easily checked directly.
  \item[\cref{eq:us1}] As $\uGamma z - y = 0$, $z > 0$ and $\xvar > 0$,
    $p$ satisfies \cref{eq:us1} with strict inequality:
    \begin{align*}
      u - \ugamma \xvar = -y - \ugamma \xvar = -\uGamma z - \ugamma \xvar > 0 .
    \end{align*}
  \item[\cref{eq:ust2}] Follows from
    \[ \ogamma \xvar^{\epsilon}_i - u^{\epsilon}_i = \ogamma \xvar -\ogamma
    \delta^{\epsilon}_i - u + \uGamma \delta^{\epsilon}_i \leq \ogamma
    - t + \delta^{\epsilon}_i(\uGamma - \ogamma) = \ogamma -
    t^{\epsilon}_i . \]
  \item[\cref{eq:nl2}] Because $z > 0$ and $y = \uGamma z < 0$, it follows that
    $y^{\epsilon}_i < 0$ and $p_i^{\epsilon}$ is not subject to \cref{eq:nl2}.
  \end{description}
\end{proof}

\begin{lemma}
  \label{umore}
  Suppose $\ogamma > 0$ and $\uGamma < 0 < \oGamma$. Let $p = (\xvar,u,y,z,t) \in R^1$ with $0 < \xvar < 1$ and $\ugamma < t <
  \ogamma$. If $u > \xvar t$ and either $y \leq 0$ or $p$ satisfies \cref{eq:nl2} with strict inequality, then $p$ is not an extreme point of $R^1$.
\end{lemma}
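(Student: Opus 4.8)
The plan is to mirror the proof of \cref{uless}: for small $\epsilon>0$ I would produce two points $p_1^\epsilon,p_2^\epsilon\in R^1$ with $p=(1/2)(p_1^\epsilon+p_2^\epsilon)$ and $p_i^\epsilon\to p$, which shows $p$ is not extreme. By \cref{ustslack} (part~2), $u>\xvar t$ forces \cref{eq:us1,eq:ust2} to hold strictly at $p$, and by hypothesis \cref{eq:nl2} is either not imposed (when $y\le 0$) or strict at $p$; hence, by \cref{lem:convergence}, all of these are automatically satisfied by $p_i^\epsilon$ once $\epsilon$ is small (keeping $y_i^\epsilon\le 0$ in the first case). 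The constraints that must be controlled by hand are therefore the ``box'' constraints \cref{eq:yu,eq:zs,eq:yz1,eq:yz2}, the overestimator constraints \cref{eq:us2,eq:ust1}, and, crucially, the nonlinear \cref{eq:nl1}.

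The key device is a scaling of $(\xvar,u,t)$ about the apex $(0,0,\ugamma)$ of the rotated cone \cref{eq:nl1}: under $\xvar\mapsto\alpha\xvar$, $u\mapsto\alpha u$, $t\mapsto\ugamma+\alpha(t-\ugamma)$ the slacks of \cref{eq:us2} and \cref{eq:ust1} are multiplied by $\alpha$ and the two sides of \cref{eq:nl1} by $\alpha^2$, so all three are preserved for every $\alpha>0$. This is the exact mirror of the $\ogamma$-apex scaling used in \cref{uless} to preserve \cref{eq:nl2,eq:us1,eq:ust2}; the difference is that there \cref{eq:nl1} was strict by \cref{ustslack}, whereas here \cref{eq:nl1} may be tight, and it is precisely this cone constraint that the scaling is designed to keep satisfied. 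Taking $\alpha=\alphep_i=1\mp\epsilon$, each perturbed coordinate is affine in $\alpha$ with value $p$ at $\alpha=1$, so $p$ is their midpoint.

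I would then split into the four cases of \cref{uless} according to which of \cref{eq:yu,eq:zs,eq:yz1,eq:yz2} are tight, treating $y,z$ separately from the scaling above. When \cref{eq:zs} is slack I also scale $y\mapsto\alpha y$, $z\mapsto\alpha z$, which preserves \cref{eq:yz1,eq:yz2} and keeps $y+u\mapsto\alpha(y+u)\le 0$; when \cref{eq:zs} is tight I instead move $z\mapsto(1-\alpha)+\alpha z$ (with $\xvar\mapsto\alpha\xvar$) so that $z+\xvar$ stays equal to $1$, scaling $y$ as needed to respect whichever of \cref{eq:yz1,eq:yz2} is tight. The only configuration not resolved this way is the fully degenerate one in which \cref{eq:yu}, \cref{eq:zs}, and one of \cref{eq:yz1,eq:yz2} are simultaneously tight. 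Here I would first observe that \cref{eq:yz2} tight is impossible: combining $y=\uGamma z$, $z+\xvar=1$, $y+u=0$ gives $u-\uGamma\xvar=-\uGamma$, and substituting into \cref{eq:nl1} reduces it to $u\le\xvar t$, contradicting $u>\xvar t$. Thus only \cref{eq:yz1} tight remains, in which case $u=-\oGamma(1-\xvar)<0$, and I perturb along the single direction that keeps \cref{eq:zs,eq:yu,eq:yz1,eq:ust1} tight, namely $(\xvar,u,y,z,t)+\delta^{\epsilon}_i(1,\oGamma,-\oGamma,-1,\oGamma-\ugamma)$. Along this direction \cref{eq:nl1} is in fact strict at $p$: using $t-\ugamma\ge u-\ugamma\xvar$ (from \cref{eq:ust1}) and $\uGamma\xvar<0$, the slack of \cref{eq:nl1} is bounded above by $u\,(u-\ugamma\xvar)<0$, since $u<0<u-\ugamma\xvar$; together with the strict \cref{eq:us1,eq:us2,eq:ust2,eq:nl2}, \cref{lem:convergence} then closes this case.

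The main obstacle, and the essential departure from \cref{uless}, is \cref{eq:nl1}: on the side $u>\xvar t$ the proposition gives no strictness for it, so it can be active and must be preserved \emph{along} the perturbation rather than handled through \cref{lem:convergence}. The resolution rests on the two facts isolated above — that \cref{eq:nl1} is a rotated cone whose generators (the apex scalings) keep it tight, and that in the degenerate case one branch is infeasible while the surviving branch forces $u<0$ and hence strictness of \cref{eq:nl1}. Once these are in place, checking that each perturbed point remains in $R^1$, and that the degenerate direction does not violate the remaining inequalities, is routine.
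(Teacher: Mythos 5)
Your proposal is correct and follows essentially the same route as the paper's proof: the same midpoint perturbation with the $\ugamma$-apex scaling of $(\xvar,u,t)$ that preserves \cref{eq:us2,eq:ust1,eq:nl1}, the same case split on which of \cref{eq:yu,eq:zs,eq:yz1,eq:yz2} are tight, the same infeasibility argument ruling out the $\uGamma z=y$ degenerate branch via \cref{eq:nl1}, and the same perturbation direction in the remaining $y=\oGamma z$ branch. The only cosmetic difference is that you establish strictness of \cref{eq:nl1} at $p$ in that last branch and invoke \cref{lem:convergence}, whereas the paper verifies \cref{eq:nl1} directly at the perturbed points using $u<0$; the two arguments are equivalent.
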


We first comment that the assumption that either $y \leq 0$ or $p$ satisfies \cref{eq:nl2} with strict inequality
follows from the assumption $u > \xvar t$ and \cref{c1:ustslack} when $\ugamma < 0$. However, we state the assumption in
this way in order to make the applicability of this proposition clear for a later case when $\ugamma > 0$.

\begin{proof}
  This proof has the same structure as the proof of \cref{uless}.
  By \cref{ustslack}, $u > \xvar t$ implies that $p$ satisfies
  \cref{eq:us1,eq:ust2} with strict inequality. Also, by assumption,
  if $y > 0$, then $p$ satisfies \cref{eq:nl2} with strict inequality.
  It remains to show that the points $p_i^{\epsilon}$ satisfy
  \cref{eq:yu,eq:zs,eq:yz1,eq:yz2,eq:us2,eq:ust1,eq:nl1} for
  $\epsilon$ small enough.
  We consider four cases.

  \proofcase{a}{$y+u < 0$ and $z+\xvar=1$}
  Note that $z+\xvar=1$ and $\xvar < 1$ implies that $z > 0$.
  Thus, either \cref{eq:yz1} or \cref{eq:yz2} is satisfied strictly
  by $p$.
  If $y - \oGamma z <
  0$, define $y_i^{\epsilon} \eqdef (1-\alphep_i)\uGamma + \alphep_i y$, and otherwise, if $\uGamma z - y < 0$, define
  $y_i^{\epsilon} \eqdef (1- \alphep_i)\oGamma + \alpha_i y$ for $\epsilon >
  0$.  Then, for $\epsilon > 0$, define
  $p_i^{\epsilon} = (\xvar_i^{\epsilon}, u_i^{\epsilon}, y_i^\epsilon, z_i^\epsilon, t_i^{\epsilon})$
  where, for $i=1,2$,
  \begin{alignat*}{5}
    \xvar_i^{\epsilon} & = \alphep_i \xvar, \ \  &
    u_i^{\epsilon} & = \alphep_i u, \ \ &
    z_i^{\epsilon} & = (1-\alphep_i) + \alphep_i z, \ \  &
    t_i^{\epsilon} & = (1-\alphep_i) \ugamma + \alphep_i t .
  \end{alignat*}

  \begin{description}[nosep,leftmargin=2.5em]
  \item[\cref{eq:yu}] Satisfied strictly by $p$ by the assumption of
    this case.
  \item[\cref{eq:zs}] Easily checked directly.
  \item[\cref{eq:yz1}, \cref{eq:yz2}] Recall that either
    \cref{eq:yz1} or \cref{eq:yz2} is satisfied strictly by $p$. In
    the case $y - \oGamma z < 0$, \ie \cref{eq:yz1} is satisfied
    strictly, we only need to check $p_i^{\epsilon}$ satisfies \cref{eq:yz2}:
    \[ \uGamma z_i^{\epsilon} - y_i^{\epsilon} = \uGamma((1-\alphep_i)
    + \alphep_i z) - ((1-\alphep_i)\uGamma + \alphep_i y) = \alphep_i
    (\uGamma z - y) \leq 0 . \]
    On the other hand, if $\uGamma z - y <
    0$, \ie \cref{eq:yz2} is satisfied strictly, then
    \[ y_i^{\epsilon} - \oGamma z_i^{\epsilon} = \oGamma(1-\alphep_i)
    + \alphep_i y - \oGamma((1-\alphep_i) + \alphep_i z) = \alphep_i (
    y - \oGamma z ) \leq 0 . \]
  \item[\cref{eq:us2}] Easily checked directly.
  \item[\cref{eq:ust1}] Shown directly by
    \[ u_i^{\epsilon} - \ugamma \xvar_i^{\epsilon} = \alphep_i(u - \ugamma \xvar) \leq \alphep_i (t - \ugamma) = t_i^{\epsilon} - (1
    - \alphep_i)\ugamma - \alphep_i \ugamma = t_i^{\epsilon} - \ugamma . \]
  \item[\cref{eq:nl1}] Shown directly by
    \begin{align*}
      (u_i^{\epsilon} - \uGamma \xvar_i^{\epsilon})(u_i^{\epsilon} - \ugamma \xvar_i^{\epsilon})
      & = (\alphep_i)^2 (u - \uGamma \xvar)(u - \ugamma \xvar) \\
      & \leq  (\alphep_i)^2 (-\uGamma ) \xvar(t - \ugamma)
       = -\uGamma ( \xvar_i^{\epsilon} \alphep_i (t - \ugamma))
        = -\uGamma \xvar_i^{\epsilon} (t_i^{\epsilon} - \ugamma) .
    \end{align*}
  \end{description}

  \proofcase{b}{$z+\xvar<1$}
  For $\epsilon > 0$, define
  $p_i^{\epsilon} = (\xvar_i^{\epsilon}, u_i^{\epsilon}, y_i^\epsilon, z_i^\epsilon, t_i^{\epsilon})$
  where, for $i=1,2$,
  \begin{alignat*}{5}
    \xvar_i^{\epsilon} & \eqdef \alphep_i \xvar, \ \ &
    u_i^{\epsilon} & \eqdef \alphep_i u, \ \  &
    y_i^{\epsilon} & \eqdef \alphep_i y, \ \ &
    z_i^{\epsilon} & \eqdef \alphep_i z, \ \ &
    t_i^{\epsilon} & \eqdef  (1-\alphep_i)\ugamma + \alphep_i t .
  \end{alignat*}
  It is clear that \cref{eq:yu,eq:yz1,eq:yz2,eq:us1,eq:us2}
  are satisfied by $p_i^{\epsilon}$ for $i=1,2$. By the assumption of
  this case \cref{eq:zs} is strictly satisfied by $p$. The remaining
  inequalities \cref{eq:ust1,eq:nl1} depend only on the
  variables $\xvar$, $u$, and $t$, and the definitions of
  $u_i^{\epsilon}$,$\xvar_i^{\epsilon}$, and $t_i^{\epsilon}$ are the same
  as in Case (a).

  \proofcase{c}{$y+u=0$, $z+\xvar=1$, and $y - \oGamma z < 0$}
  For $\epsilon > 0$, define
  $p_i^{\epsilon} = (\xvar_i^{\epsilon}, u_i^{\epsilon}, y_i^\epsilon, z_i^\epsilon, t_i^{\epsilon})$
  where, for $i=1,2$,
  \begin{alignat*}{5}
    \xvar_i^{\epsilon} & \eqdef \alphep_i \xvar, \ \ &
    u_i^{\epsilon} & \eqdef \alphep_i u, \ \ &
    y_i^{\epsilon} & \eqdef \alphep_i y, \ \ &
    z_i^{\epsilon} & \eqdef (1-\alphep_i) + \alphep_i z, \ \ &
    t_i^{\epsilon} & \eqdef  (1-\alphep_i)\ugamma + \alphep_i t .
  \end{alignat*}

\begin{description}[nosep,leftmargin=2.5em]
  \item[\cref{eq:yu}, \cref{eq:zs}] Easily checked directly.
  \item[\cref{eq:yz1}] Satisfied strictly by $p$ by the assumption of
    this case.
  \item[\cref{eq:yz2}] We show that \cref{eq:yz2} is satisfied
    strictly by $p$. Indeed, if $\uGamma z - y = 0$, then the other
    equations for this case imply that $u - \uGamma \xvar = -\uGamma$.
    Then, evaluating $p$ in the left-hand-side of \cref{eq:nl1}
    yields:
    \[ (u - \uGamma \xvar)(u - \ugamma \xvar) = -\uGamma (u - \ugamma \xvar) >
    -\uGamma (\xvar t - \ugamma \xvar) = -\uGamma \xvar (t - \ugamma) \] %
    and so $p$ violates \cref{eq:nl1}.
  \item[\cref{eq:us2}] Easily checked directly.
  \item[\cref{eq:ust1}, \cref{eq:nl1}] As the definitions of
    $\xvar_i^{\epsilon}$, $u_i^{\epsilon}$, and $t_i^{\epsilon}$ are the
    same as in Case (a), the arguments in that case imply
    $p_i^{\epsilon}$ satisfies \cref{eq:ust1} and \cref{eq:nl1} for
    $\epsilon \in (0,1)$.
  \end{description}

  \proofcase{d}{$y+u=0$, $z+\xvar=1$, and $y - \oGamma z = 0$}
  For $\epsilon > 0$, define $p_i^{\epsilon} = (\xvar_i^{\epsilon},
  u_i^{\epsilon}, y_i^\epsilon, z_i^\epsilon, t_i^{\epsilon})$ where, for $i=1,2$,
  \begin{alignat*}{5}
    \xvar_i^{\epsilon} & \eqdef \xvar - \delta_i, \ \ &
    u_i^{\epsilon} & \eqdef u - \delta_i \oGamma, \ \ &
    y_i^{\epsilon} & \eqdef y + \delta_i \oGamma, \ \ &
    z_i^{\epsilon} & \eqdef z + \delta_i, \ \ &
    t_i^{\epsilon} & \eqdef t - \delta_i (\oGamma - \ugamma) .
  \end{alignat*}

  \begin{description}[nosep,leftmargin=2.5em]
  \item[\cref{eq:yu}, \cref{eq:zs}, \cref{eq:yz1}] Easily checked directly.
  \item[\cref{eq:yz2}] \cref{eq:yz2} is satisfied
    strictly by $p$ by the same argument as in the previous case.
  \item[\cref{eq:us2}] We show that $\ogamma \xvar - u > 0$, i.e.,
    \cref{eq:us2} is satisfied strictly by $p$. Indeed, the three
    equations in this case imply that $\oGamma \xvar - u = \oGamma$. Thus,
  \[ \ogamma \xvar - u = \ogamma \xvar - \oGamma \xvar + \oGamma = \ogamma \xvar + (1-\xvar)\oGamma > 0 . \]
  \item[\cref{eq:ust1}] Shown directly by
    \begin{align}
      u_i^{\epsilon} - \ugamma \xvar_i^{\epsilon} = u - \delta^{\epsilon}_i \oGamma - \ugamma (\xvar - \delta^{\epsilon}_i)
      &= u - \ugamma \xvar - \delta_i (\oGamma - \ugamma) \nonumber \\
      &\leq t - \ugamma - \delta_i (\oGamma - \ugamma) = t_i - \ugamma  . \label{uineq}
    \end{align}
  \item[\cref{eq:nl1}] As $y = \oGamma z$ and $z > 0$, this implies
    $y > 0$ and in turn $u < 0$. Thus,
    $ u - \uGamma \xvar < -\uGamma \xvar$ and so, for $\epsilon > 0$ small
    enough, also $u_i^{\epsilon} - \uGamma \xvar_i^{\epsilon} < -\uGamma
    \xvar_i^{\epsilon}$. Combining this with \cref{uineq} yields
    \[ (u_i^{\epsilon} - \uGamma \xvar_i^{\epsilon})( u_i^{\epsilon} -
    \ugamma \xvar_i^{\epsilon} ) \leq -\uGamma \xvar_i^{\epsilon} (t_i -
    \ugamma) . \]
  \end{description}
\end{proof}

\subsection{Proof of \cref{thm:case2}}
\label{sec:prov_case2}

We now state and prove the two main lemmas that support the proof of \cref{thm:case2}.

\begin{lemma}
  \label{c2:uless}
  Suppose $\ugamma < \ogamma < 0$ and $\uGamma < 0 < \oGamma$. Let $p = (\xvar,u,y,z,t) \in R^2$ with $0 < \xvar < 1$ and $\ugamma < t < \ogamma$. If $u < \xvar t$, then $p$ is not an extreme point of $R^2$.
\end{lemma}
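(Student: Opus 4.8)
The plan is to mirror the proof of \cref{uless}: for $\epsilon > 0$ I will construct two points $p_1^\epsilon, p_2^\epsilon$ with $p = \tfrac12(p_1^\epsilon + p_2^\epsilon)$ and $p_i^\epsilon \to p$ as $\epsilon \to 0$, and then show both lie in $R^2$ once $\epsilon$ is small enough, so that $p$ cannot be extreme. Recall that $R^2$ consists of the $R^0$ inequalities together with \cref{eq:nl2,c2:lin}; unlike $R^1$ it does not contain \cref{eq:nl1}, so that inequality never needs to be checked here.

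First I would identify the inequalities already slack at $p$, since \cref{lem:convergence} then automatically propagates them to $p_i^\epsilon$. Because $\uGamma < 0$ and $u < \xvar t$, part~1 of \cref{ustslack} gives that $p$ satisfies \cref{eq:us2,eq:ust1} strictly (its conclusion about \cref{eq:nl1} is irrelevant, as \cref{eq:nl1} is not a constraint of $R^2$). The one genuinely new point is that \cref{c2:lin} is also slack at $p$. This follows from the algebraic identity
\[
  (\ogamma-\ugamma)y + \ugamma(\ogamma \xvar - u) + \oGamma(u - \ugamma \xvar)
   = L(p) + (\oGamma - \ugamma)(u - \xvar t),
\]
where $L(p)$ is the left-hand side of the $R^0$-valid inequality \cref{case2nlform} from the proof of \cref{c2:lin}. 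Since $p \in R^0$ we have $L(p) \le \oGamma(t - \ugamma)$, and because $\oGamma - \ugamma > 0$ and $u - \xvar t < 0$, the left-hand side of \cref{c2:lin} lies strictly below $\oGamma(t - \ugamma)$. Hence \cref{lem:convergence} disposes of \cref{c2:lin} as well.

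The remaining constraints to verify directly for $p_1^\epsilon, p_2^\epsilon$ are then exactly \cref{eq:yu,eq:zs,eq:yz1,eq:yz2,eq:us1,eq:ust2,eq:nl2} --- precisely the list treated in the proof of \cref{uless}. I would therefore reuse verbatim the four-case split of that proof (case $y+u<0$; case $z+\xvar<1$; case $\uGamma z - y < 0$ and $y - \oGamma z < 0$; and the boundary case $z+\xvar=1$, $y+u=0$ with one of \cref{eq:yz1,eq:yz2} tight) together with the perturbed points defined there. Every sign fact invoked in those verifications --- $\uGamma<0<\oGamma$, $\ugamma<0$, $t-\ugamma>0$, and $\ogamma - t>0$ --- still holds under the present hypotheses $\ugamma < \ogamma < 0$ and $\uGamma < 0 < \oGamma$, and the sign of $\ogamma$ itself never enters (only $\ogamma > t$ is used); in particular \cref{eq:nl2} is valid here because its validity needs only $\oGamma > 0$ and $\ugamma < 0$. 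Thus the verifications transfer without change.

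I expect the main obstacle to be the boundary case. As in \cref{uless}, one must first use the strict satisfaction of \cref{eq:nl2} at $p$ to exclude the sub-case $y = \oGamma z$, thereby forcing $y = \uGamma z < 0$ and pinning down the admissible perturbation direction; this step uses only $\oGamma > 0$, $z > 0$, $\ogamma - t > 0$, and $u < \xvar t$, all of which are available, so it carries over unchanged. Checking that the reused perturbations keep $t_i^\epsilon \in (\ugamma,\ogamma)$ and preserve the nonlinear inequality \cref{eq:nl2} (active only when $y > 0$, whereas here $y_i^\epsilon < 0$) is the most delicate bookkeeping, but it is identical to what is already done in \cref{uless}.
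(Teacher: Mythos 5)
Your proposal is correct and follows essentially the same route as the paper: the paper likewise proves the lemma by showing that \cref{c2:lin} is strictly satisfied at $p$ when $u < \xvar t$ (via exactly the algebra your identity encodes, since the difference between the left-hand side of \cref{c2:lin} and that of \cref{case2nlform} is $(\oGamma - \ugamma)(u - \xvar t) < 0$), and then observing that swapping \cref{c2:lin} for \cref{eq:nl1} is the only difference between $R^2$ and $R^1$, so the four-case construction of \cref{uless} applies verbatim. Your write-up is merely more explicit about which inequalities are discharged by slackness versus direct verification.
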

\begin{proof}
  First, we show that $p$ satisfies \cref{c2:lin} with strict inequality.
  Observe that the inequality \cref{case2nlform} is valid for any point in $R^2$. Thus,
  \begin{align*}
    (\ogamma-\ugamma)y + \ugamma (\ogamma \xvar - u)  + \oGamma (u -
                                                       \ugamma \xvar)
     &< (\ogamma-\ugamma)y + \ugamma (\ogamma \xvar - \xvar t) +
      \oGamma (\xvar t - \ugamma \xvar) \\
    &= (\ogamma - \ugamma)y + (\ogamma - t)\xvar \ugamma + (t - \ugamma)\oGamma \xvar \leq  (t - \ugamma)\oGamma .
  \end{align*}
  When $u < \xvar t$, the inequality \cref{c2:lin} is satisfied with strict
  inequality, just as \cref{eq:nl1} is satisfied by strict inequality
  when $u < \xvar t$ and $\ogamma > 0$. As the substitution of \cref{c2:lin}
  for \cref{eq:nl1} is the only difference between the sets $R^2$ and
  $R^1$, the arguments of \cref{uless} apply directly to this case, and
  we can conclude that if $u < \xvar t$, $0 < \xvar < 1$, and $\ugamma < t <
  \ogamma$, then $p$ is not an extreme point of $R^2$.
  \end{proof}

\begin{lemma}
  \label{c2:umore}
  Suppose $\ugamma < \ogamma < 0$ and $\uGamma < 0 < \oGamma$. Let $p = (\xvar,u,y,z,t) \in R^2$ with $0 < \xvar < 1$ and $\ugamma < t < \ogamma$. If $u > \xvar t$, then $p$ is not an extreme point of $R^2$.
\end{lemma}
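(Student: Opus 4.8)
The plan is to reuse the non-extremality technique of this section: assuming $p=(\xvar,u,y,z,t)\in R^2$ with $0<\xvar<1$, $\ugamma<t<\ogamma$ and $u>\xvar t$, I will produce a nonzero direction $d$ with $p\pm\epsilon d\in R^2$ for all small $\epsilon>0$, exhibiting $p$ as the midpoint of a segment in $R^2$. The first step is to determine which constraints are locally active at $p$. Since $\uGamma<0$, \cref{ustslack} (the case $u>\xvar t$) shows \cref{eq:us1,eq:ust2} hold strictly. In this case \cref{eq:us2} gives $u\le\ogamma\xvar<0$, so $u<0$; hence whenever \cref{eq:yu} is active we have $y=-u>0$, which forces \cref{eq:yz2} to be strict and, by \cref{c1:ustslack}, makes \cref{eq:nl2} strict as well (and when $y\le0$, \cref{eq:nl2} applies only where it is strict, reducing at $y=0$ to the strict \cref{eq:ust2} through its convex extension \cref{thm:convex_extension}). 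Thus the only constraints that can be active near $p$ are the linear ones \cref{eq:yu,eq:zs,eq:yz1,eq:yz2,eq:us2,eq:ust1,c2:lin}, and near $p$ the set $R^2$ coincides with the polyhedron they cut out.

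It follows that $p$ is extreme only if five of these seven linear constraints are simultaneously active with linearly independent gradients. The heart of the argument is to show this is impossible when $0<\xvar<1$ and $\ugamma<t<\ogamma$; then the active normals span a space of dimension at most four, any $d$ orthogonal to them yields the perturbation, and \cref{lem:convergence} (together with the strictness above) takes care of the inactive constraints. The feature that makes this lemma harder than its companion \cref{c2:uless} is the linear inequality \cref{c2:lin}: whereas \cref{c2:lin} is \emph{strict} at $p$ when $u<\xvar t$, under $u>\xvar t$ it may be tight, so it genuinely enters the active set and must be controlled. This is the main obstacle.

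The key computation disposes of \cref{c2:lin} cheaply. Under \cref{eq:zs,eq:yz1,eq:us2,eq:ust1} one has $z=1-\xvar$, $y=\oGamma(1-\xvar)$, $u=\ogamma\xvar$ and $t-\ugamma=(\ogamma-\ugamma)\xvar$, so $\ogamma\xvar-u=0$ and $u-\ugamma\xvar=(\ogamma-\ugamma)\xvar$; substituting into \cref{c2:lin} collapses its two sides to
\[ (\ogamma-\ugamma)\oGamma \ \le\ \oGamma(\ogamma-\ugamma)\,\xvar , \]
i.e.\ to $1\le\xvar$. Two consequences follow at once. First, the $R^0$-vertex defined by \cref{eq:yu,eq:zs,eq:yz1,eq:us2,eq:ust1} has $\xvar=\oGamma/(\oGamma-\ogamma)\in(0,1)$ and therefore \emph{violates} \cref{c2:lin}, so it does not lie in $R^2$. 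Second, forcing \cref{c2:lin} to be tight alongside \cref{eq:zs,eq:yz1,eq:us2,eq:ust1} requires $\xvar=1$, which is excluded. The remaining quintuples that contain \cref{c2:lin} reduce the same way to a boundary value: keeping \cref{eq:yu} but dropping \cref{eq:us2} forces $t=\ugamma$, dropping \cref{eq:ust1} forces $t=\ogamma$, dropping \cref{eq:yz1} forces $\ogamma=0$, and dropping \cref{eq:zs} forces $y=0$ (hence $u=0$), each contradicting $0<\xvar<1$, $\ugamma<t<\ogamma$ or $u<0$. Finally, any quintuple involving \cref{eq:yz2} is degenerate or boundary: with $u<0$, an active \cref{eq:yz2} is compatible with an active \cref{eq:yu} only if $z=0$, and \cref{eq:yz1,eq:yz2} active together likewise force $z=0$, whence \cref{eq:zs} gives $\xvar=1$. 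I expect the bookkeeping over these quintuples to be the most tedious part, but each is a one-line linear reduction of exactly the displayed type.

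With no five active constraints having independent gradients, I choose $d\neq0$ orthogonal to the gradients of the (at most four) active constraints and set $p_i^{\epsilon}=p+(-1)^i\epsilon d$; then $p=\tfrac12(p_1^{\epsilon}+p_2^{\epsilon})$, the active linear constraints are preserved because $\langle\text{gradient},d\rangle=0$, and the strict constraints—including \cref{eq:nl2} via \cref{c1:ustslack,thm:convex_extension}—are preserved for small $\epsilon$ by continuity, so $p_1^{\epsilon},p_2^{\epsilon}\in R^2$ and $p$ is not extreme. If instead one prefers the explicit style of \cref{umore}, the same four cases (by whether $z+\xvar<1$ or $=1$, whether $y+u<0$ or $=0$, and the status of \cref{eq:yz1}) can be used verbatim, replacing the verification of \cref{eq:nl1} by that of \cref{c2:lin}; the identity displayed above is precisely what guarantees that the chosen directions keep \cref{c2:lin} satisfied.
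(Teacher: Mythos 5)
Your overall strategy is sound and genuinely different from the paper's. You first argue that every nonlinear constraint is strict at $p$ (\cref{eq:us1,eq:ust2} by \cref{ustslack}, and \cref{eq:nl2} by \cref{c1:ustslack} when $y>0$ or via the convex extension of \cref{thm:convex_extension} when $y\leq 0$), so that near $p$ the set $R^2$ coincides with the polyhedron cut out by \cref{eq:yu,eq:zs,eq:yz1,eq:yz2,eq:us2,eq:ust1,c2:lin}, and then you try to show that no five of these seven can be active with normals spanning $\reals^5$. The paper instead follows the template of \cref{uless,umore}: it splits into four cases according to which of \cref{eq:yu,eq:zs,eq:yz1,eq:yz2} are tight, writes down explicit perturbed points $p_i^{\epsilon}$, and verifies each inequality directly, showing in the hardest case that \cref{eq:us2,eq:ust1} must be strict because tightness would contradict \cref{c2:lin}. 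Your rank-counting route avoids constructing directions by hand, at the price of an enumeration of $\binom{7}{5}=21$ quintuples; your identification of \cref{c2:lin} as the new obstacle, and the reduction of \cref{c2:lin} to ``$1\leq \xvar$'' under \cref{eq:zs,eq:yz1,eq:us2,eq:ust1}, are correct and do most of the work.

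The gap is in the enumeration. Your dismissal of ``any quintuple involving \cref{eq:yz2}'' only covers quintuples that also contain \cref{eq:yu} (indeed incompatible, since $u<0$ forces $y=-u>0$ while \cref{eq:yz2} forces $y\leq 0$) or that contain both \cref{eq:yz1} and \cref{eq:zs}. It silently skips the quintuple \cref{eq:yz2,eq:zs,eq:us2,eq:ust1,c2:lin} (which does reduce to $\xvar=1$ by a computation of your displayed type), and, more seriously, it mishandles the quintuple \cref{eq:yz2,eq:yz1,eq:us2,eq:ust1,c2:lin}. For that one, $z=y=0$, $u=\ogamma\xvar$ and $t-\ugamma=(\ogamma-\ugamma)\xvar$ make \cref{c2:lin} automatically tight, so all five constraints \emph{are} simultaneously active at genuine points of $R^2$ with $0<\xvar<1$, $\ugamma<t<\ogamma$ and $u>\xvar t$; your conclusion ``whence \cref{eq:zs} gives $\xvar=1$'' does not follow because \cref{eq:zs} need not be active there. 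This case is not a one-line reduction to a boundary value: it has to be killed by observing that the five gradients are linearly dependent (the normal of \cref{c2:lin} lies in the span of those of \cref{eq:yz1,eq:yz2,eq:us2,eq:ust1}), so the active normals still span only a four-dimensional space and a feasible perturbation direction exists. With that case repaired and the remaining quintuples checked, your argument goes through, but as written the claim that every quintuple is handled by the listed patterns is false.
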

\begin{proof}
  This proof has the same structure as the proof of \cref{uless}.
  First, by \cref{ustslack}, $u > \xvar t$ implies that $p$ satisfies
  \cref{eq:us1,eq:ust2} with strict inequality, and by
  \cref{c1:ustslack} if also $y > 0$, then $p$ satisfies
  \cref{eq:nl2} with strict inequality. Also, as $\ogamma < 0$, it
  follows from $u \leq \ogamma \xvar$ and $\xvar > 0$ that $u < 0$. It remains
  to show that the points $p_i^{\epsilon}$ are feasible for the
  inequalities \cref{eq:yu,eq:zs,eq:yz1,eq:yz2,eq:us2,eq:ust1,c2:lin}.
  We consider four cases.

  \proofcase{a}{$y+u < 0$}
  For $\epsilon > 0$, define
  $p_i^{\epsilon} = (\xvar_i^{\epsilon}, u_i^{\epsilon}, y_i^\epsilon, z_i^\epsilon, t_i^{\epsilon})$
  where, for $i=1,2$,
  \begin{alignat*}{5}
    \xvar_i^{\epsilon} & \eqdef (1-\alphep_i) + \alphep_i \xvar, \ \ &
    u_i^{\epsilon} & \eqdef \ogamma(1-\alphep_i) + \alphep_i u, \  \ &
    y_i^{\epsilon} & \eqdef \alphep_i y, \ \\
    z_i^{\epsilon} & \eqdef \alphep_i z, \ \ &
    t_i^{\epsilon} & \eqdef (1-\alphep_i) \ogamma + \alphep_i t .
  \end{alignat*}

  \begin{description}[nosep,leftmargin=2.5em]
  \item[\cref{eq:yu}] Satisfied strictly by $p$ by the assumption of
    this case.
  \item[\cref{eq:zs,eq:yz1,eq:yz2}] Easily checked
    directly.
  \item[\cref{eq:us2}] Shown directly by
    \begin{equation}
      \label{c2:interm1}
      \ogamma \xvar_i^{\epsilon} - u_i^{\epsilon} = \ogamma (1 -\alphep_i) + \ogamma \alphep_i \xvar - (1-\alphep_i)\ogamma -
      \alphep_i u = \alphep_i (\ogamma \xvar - u) \geq 0 .
    \end{equation}
  \item[\cref{eq:ust1}] Shown directly by
    \begin{align}
      u_i^{\epsilon} - \ugamma \xvar_i^{\epsilon} &= \ogamma (1-\alphep_i)
      + \alphep_i u - \ugamma (1-\alphep_i) - \ugamma
      \alphep_i \xvar \nonumber \\
      &= (\ogamma - \ugamma) (1-\alphep_i) + \alphep_i(u - \ugamma \xvar)  \label{c2:interm2} \\
      &\leq (\ogamma - \ugamma) (1-\alphep_i) + \alphep_i (t - \ugamma) \nonumber \\
      & = (\ogamma - \ugamma)(1-\alphep_i) + t_i^{\epsilon} - (1-\alphep_i)\ogamma - \alphep_i \ugamma
       = t_i^{\epsilon} - \ugamma . \label{c2:interm4}
    \end{align}
  \item[\cref{c2:lin}] Using \cref{c2:interm1,c2:interm2},
    we get
    \begin{align*}
      & (\ogamma - \ugamma) y_i^{\epsilon} + \ugamma (\ogamma
      \xvar_i^{\epsilon} - u_i^{\epsilon}) + \oGamma (u_i^{\epsilon} -
      \ugamma \xvar_i^{\epsilon}) \\
      & = \alphep_i \Bigl( (\ogamma - \ugamma) y + \ugamma (\ogamma \xvar
      - u) + \oGamma (u - \ugamma \xvar)\Bigr) + \oGamma (\ogamma
      - \ugamma)(1-\alphep_i) \\
      & \leq \alphep_i \oGamma (t - \ugamma) + (1-\alphep_i) \oGamma
      (\ogamma - \ugamma) = \oGamma (t_i^{\epsilon} - \ugamma)
    \end{align*}
    where the last equation follows from \cref{c2:interm4}.
  \end{description}

  \proofcase{b}{$z+\xvar<1$}
  For $\epsilon > 0$, define
  $p_i^{\epsilon} = (\xvar_i^{\epsilon}, u_i^{\epsilon}, y_i^\epsilon, z_i^\epsilon, t_i^{\epsilon})$
  where, for $i=1,2$,
  \begin{alignat*}{5}
    \xvar_i^{\epsilon} & \eqdef \alphep_i \xvar, \ \ &
    u_i^{\epsilon} & \eqdef \alphep_i u, \ \ &
    y_i^{\epsilon} & \eqdef \alphep_i y, \ \ &
    z_i^{\epsilon} & \eqdef \alphep_i z, \ \ &
    t_i^{\epsilon} & \eqdef  \alphep_i t + (1-\alphep_i)\ugamma .
  \end{alignat*}

  \begin{description}[nosep,leftmargin=2.5em]
  \item[\cref{eq:yu}] Easily checked directly.
  \item[\cref{eq:zs}] Satisfied strictly by $p$ by the assumption of
    this case.
  \item[\cref{eq:yz1}, \cref{eq:yz2}, \cref{eq:us2}] Easily checked
    directly.
  \item[\cref{eq:ust1}] Shown directly by
    \begin{align}
      u_i^{\epsilon}  - \ugamma \xvar_i^{\epsilon} = \alphep_i u - \ugamma \alphep_i \xvar
      & \leq \alphep_i (t - \ugamma) = t_i^{\epsilon} - (1-\alphep_i)\ugamma - \alphep_i \ugamma \nonumber\\
      & = t_i^{\epsilon} - \ugamma . \label{c22:interm}
    \end{align}
  \item[\cref{c2:lin}] Shown directly by
    \begin{align*}
      (\ogamma-\ugamma)y_i^{\epsilon} & + \ugamma (\ogamma
      \xvar_i^{\epsilon} - u_i^{\epsilon}) + \oGamma (u_i^{\epsilon} -
                                        \ugamma \xvar_i^{\epsilon}) \\
      & = \alphep_i \bigl( (\ogamma-\ugamma)y
      + \ugamma (\ogamma \xvar - u) +
        \oGamma (u - \ugamma \xvar) \bigr)
       \leq \alphep_i \oGamma(t - \ugamma) = \oGamma (t_i^{\epsilon} - \ugamma)
    \end{align*}
    where the last equation follows as in \cref{c22:interm}.
  \end{description}

  \proofcase{c}{$y - \oGamma z < 0$ and $\uGamma z - y < 0$}
  For $\epsilon > 0$, define
  $p_i^{\epsilon} = (\xvar_i^{\epsilon}, u_i^{\epsilon}, y_i^\epsilon, z_i^\epsilon, t_i^{\epsilon})$
  where, for $i=1,2$,
  \begin{alignat*}{5}
    \xvar_i^{\epsilon} & \eqdef \alphep_i \xvar, \ \ &
    u_i^{\epsilon} & \eqdef \alphep_i u, \ \  &
    y_i^{\epsilon} & \eqdef \alphep_i y, \ \ &
    z_i^{\epsilon} & \eqdef (1-\alphep_i) + \alphep_i z, \ \ &
    t_i^{\epsilon} & \eqdef  \alphep_i t + (1-\alphep_i)\ugamma.
  \end{alignat*}
  Then, it is easily seen by construction that $p_i^{\epsilon}$ satisfies \cref{eq:zs} for any $\epsilon \in (0,1)$,
  $i=1,2$.
  As the definitions of $\xvar_i^{\epsilon}$, $u_i^{\epsilon}$, $y_i^{\epsilon}$, and $t_i^{\epsilon}$ are the same as in Case (b),
  the arguments of Case (b) apply for all inequalities that do not contain the variable $z$. This just leaves
  and \cref{eq:yz1,eq:yz2}, which by assumption are satisfied strictly by $p$, and so the proof for this case
  is complete.

  \proofcase{d}{$y+u = 0$, $z+\xvar=1$, and either $y - \oGamma z = 0$ or $\uGamma z - y = 0$}
  For $\epsilon > 0$, define
  $p_i^{\epsilon} = (\xvar_i^{\epsilon}, u_i^{\epsilon}, y_i^\epsilon, z_i^\epsilon, t_i^{\epsilon})$
  where, for $i=1,2$,
  \begin{alignat*}{5}
    \xvar_i^{\epsilon} & \eqdef (1-\alphep_i) + \alphep_i \xvar, \ \ &
    u_i^{\epsilon} & \eqdef \alphep_i u, \ \ &
    y_i^{\epsilon} & \eqdef \alphep_i y, \ \ &
    z_i^{\epsilon} & \eqdef \alphep_i z, \ \ &
    t_i^{\epsilon} & \eqdef (1-\alphep_i)\frac{\ugamma \ogamma}{\oGamma} + \alphep_i t.
  \end{alignat*}

  \begin{description}[nosep,leftmargin=2.5em]
  \item[\cref{eq:yu,eq:zs,eq:yz1,eq:yz2}] Easily checked directly.
  \item[\cref{eq:us2}] We show that $p$ satisfies \cref{eq:us2}
    strictly. Suppose for purpose of contradiction that $\ogamma \xvar - u
    = 0$. Then,
    \begin{align*}
      (\ogamma-\ugamma)y + \ugamma (\ogamma \xvar - u) + \oGamma (u -
      \ugamma \xvar) & = (\ogamma-\ugamma)y + \oGamma (\ogamma \xvar - \ugamma
      \xvar)\\
      & = (\ogamma - \ugamma)(y + \oGamma \xvar)
       = (\ogamma - \ugamma)\oGamma
       > \oGamma (t - \ugamma)
    \end{align*}
    where we have used $y + \oGamma \xvar =
    \oGamma z + \oGamma \xvar = \oGamma$. Thus, when $\ogamma \xvar - u = 0$
    then \cref{c2:lin} is violated, and hence we conclude that
    \cref{eq:us2} is satisfied strictly by $p$.
  \item[\cref{eq:ust1}] We show that $p$ satisfies \cref{eq:ust1}
    strictly. Indeed, as $y = -u$, we find that
    \[ (\ogamma - \ugamma) y + \ugamma (\ogamma \xvar - u) = (\ogamma -
    \ugamma)(-u) + \ugamma (\ogamma \xvar - u) = \ogamma (\ugamma \xvar - u) >
    0 \] since $\ogamma < 0$ and $\ugamma \xvar - u < 0$. Thus,
    rearranging inequality \cref{c2:lin} yields
    \[ u - \ugamma \xvar \leq t - \ugamma - \frac{1}{\oGamma} \bigl(
    (\ogamma - \ugamma) y + \ugamma (\ogamma \xvar - u) \bigr) < t -
    \ugamma \] which shows \cref{eq:ust1} is satisfied strictly by
    $p$.
  \item[\cref{c2:lin}] Shown directly by
    \begin{align*}
      & (\ogamma-\ugamma)y_i^{\epsilon} + \ugamma (\ogamma
      \xvar_i^{\epsilon} - u^i_{\epsilon}) + \oGamma (u_i^{\epsilon} -
      \ugamma
      \xvar_i^{\epsilon}) \\
      &= \alphep_i \bigl((\ogamma-\ugamma)y + \ugamma (\ogamma \xvar - u)
      + \oGamma (u - \ugamma \xvar) \bigr) + \ugamma \ogamma
      (1-\alphep_i) - \oGamma \ugamma (1-\alphep_i) \\
      &\leq \alphep_i \oGamma(t - \ugamma) - (1-\alphep_i)\ugamma (\oGamma - \ogamma) \\
      &= \oGamma \Bigl( t_i^{\epsilon} - (1-\alphep_i)\frac{\ugamma \ogamma}{\oGamma} - \ugamma \alphep_i \Bigr) - (1-\alphep_i)\ugamma (\oGamma - \ogamma) \\
      &= \oGamma (t_i^{\epsilon} - \ugamma) - (1-\alphep_i)(\ugamma \ogamma) + (1-\alphep_i) (\ugamma \ogamma)
      =\oGamma (t_i^{\epsilon} - \ugamma) .
    \end{align*}
  \end{description}
\end{proof}

\subsection{Proof of \cref{thm:case3}}
\label{sec:prov_case3}

We now state and prove the two main lemmas that support the proof of
\cref{thm:case3}. We prepare the proofs with the following
proposition.
\begin{proposition}
  \label{c3:nlvalid}
  Let $\uGamma < 0$. If $p \in R^3$, then $p$ satisfies the following inequality:
  \begin{align}
    (\ugamma - \uGamma)\xvar &\leq -\uGamma \label{c3:nl1}
  \end{align}
  In addition, if $p$ satisfies \cref{eq:yu,eq:zs,eq:yz2,eq:us1} at equality, then it
  satisfies \cref{c3:nl1} at equality.
  % and
  % if $p$ satisfies \cref{eq:yu}, \cref{eq:zs}, and \cref{eq:yz2} at equality, then it satisfies \cref{c3:nl2} at equality.
\end{proposition}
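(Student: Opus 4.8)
The plan is to obtain \cref{c3:nl1} purely as a nonnegative aggregation of defining inequalities of $R^0$, exactly as was already carried out inside the proof of \cref{lem:beast_case3}. Since $R^3 \subseteq R^0$, every $p \in R^3$ satisfies all of \cref{eq:yu,eq:zs,eq:yz2,eq:us1}, and these are the only constraints I will need.

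First I would reproduce \cref{ysimp}: adding \cref{eq:yu} (in the form $y+u\leq 0$) to \cref{eq:us1} (in the form $-u \leq -\ugamma\xvar$) gives $y \leq -\ugamma\xvar$. Combining this with \cref{eq:yz2} ($\uGamma z \leq y$) yields $\uGamma z + \ugamma \xvar \leq 0$. Next I would scale \cref{eq:zs} ($z + \xvar \leq 1$) by the weight $-\uGamma$, which is positive by the hypothesis $\uGamma < 0$, so that the inequality direction is preserved and we obtain $-\uGamma z - \uGamma \xvar \leq -\uGamma$. Adding these two inequalities cancels the $z$ terms and produces $(\ugamma - \uGamma)\xvar \leq -\uGamma$, which is \cref{c3:nl1}; this establishes validity for every $p \in R^3$.

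For the equality statement, the key observation is that the derivation above uses precisely the four inequalities \cref{eq:yu,eq:zs,eq:yz2,eq:us1} and nothing else. I would therefore simply retrace the chain under the assumption that each of these four holds at equality. Then $y + u = 0$ together with $u = \ugamma \xvar$ forces $y = -\ugamma\xvar$, i.e.\ equality in \cref{ysimp}; the equality $y = \uGamma z$ then gives $\uGamma z + \ugamma\xvar = 0$; and $z + \xvar = 1$ turns the scaled \cref{eq:zs} into the equality $-\uGamma z - \uGamma\xvar = -\uGamma$. Summing the two equalities yields $(\ugamma - \uGamma)\xvar = -\uGamma$ exactly.

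There is essentially no hard step here: the inequality is a certificate-style aggregation with nonnegative multipliers. The only point requiring care is the equality claim, where I must verify that no inequality outside \cref{eq:yu,eq:zs,eq:yz2,eq:us1} enters the aggregation, so that tightness of exactly these four propagates to tightness of the conclusion. Keeping track of the sign of the multiplier $-\uGamma > 0$ on \cref{eq:zs} is what guarantees both the correct direction of the scaled inequality and the cancellation of the $z$ terms.
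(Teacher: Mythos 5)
Your proof is correct and is essentially the paper's own argument: the paper aggregates \cref{eq:yu} (weight $1$), \cref{eq:zs} (weight $-\uGamma$), \cref{eq:yz2} (weight $1$), and \cref{eq:us1} (weight $-1$), which is exactly the two-step combination you carry out, and the equality claim follows in both cases because only these four constraints enter the aggregation. No gaps.
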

\begin{proof}
  First, aggregating \cref{eq:yu} with weight $1$, \cref{eq:zs} with weight $-\uGamma$, \cref{eq:yz2} with
  weight $1$, and \cref{eq:us1} with weight $-1$, yields \cref{c3:nl1}.
  If \cref{eq:yu,eq:zs,eq:yz2,eq:us1} are all
  satisfied at equality, then $p$ satisfies \cref{c3:nl1} at equality.
\end{proof}

\begin{lemma}
  \label{c3:uless}
  Suppose $0 < \ugamma < \ogamma$ and $\uGamma < 0 < \oGamma$. Let $p
  = (\xvar,u,y,z,t) \in R^3$ with $0 < \xvar < 1$ and $\ugamma < t <
  \ogamma$. If $u < \xvar t$, then $p$ is not an extreme point of $R^3$.
\end{lemma}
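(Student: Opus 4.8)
The plan is to follow the same machinery as in the proof of \cref{uless}: since \cref{easycases} already handles $\xvar\in\{0,1\}$ and $t\in\{\ugamma,\ogamma\}$, I will exhibit points $p_1^{\epsilon},p_2^{\epsilon}\to p$ with $p=\tfrac12(p_1^{\epsilon}+p_2^{\epsilon})$ that lie in $R^3$ for all small $\epsilon>0$, which shows $p$ is not extreme. By \cref{ustslack}, the assumption $u<\xvar t$ (together with $\uGamma<0$) already makes \cref{eq:us2,eq:ust1,eq:nl1} strict at $p$, so \cref{lem:convergence} disposes of those three automatically; the work is to keep \cref{eq:yu,eq:zs,eq:yz1,eq:yz2,eq:us1,eq:ust2,c3:lin} satisfied by the $p_i^{\epsilon}$.

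Two features of Case 3 simplify and complicate the argument relative to \cref{uless}. On the simplifying side, $\ugamma>0$ forces $u\geq\ugamma\xvar>0$ through \cref{eq:us1}, hence $y\leq-u<0$ through \cref{eq:yu}; since $\oGamma>0$ and $z\geq0$ this makes \cref{eq:yz1} strict at \emph{every} feasible $p$, so the subcase $y=\oGamma z$ that needed \cref{eq:nl2} in \cref{uless} cannot occur and is absorbed by \cref{lem:convergence}. On the complicating side, \cref{c3:lin} replaces \cref{eq:nl2} and, unlike \cref{eq:nl1}, it need not be strict when $u<\xvar t$; moreover, because $\ugamma$ has flipped sign, the directions used in \cref{uless} no longer preserve \cref{eq:us1} or \cref{c3:lin}. (For instance, the Case (a) direction of \cref{uless}, which drives $(\xvar,t)\to(1,\ugamma)$, violates \cref{c3:lin} precisely when that inequality is tight at $p$.) New, carefully signed directions must therefore be built.

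I would organize the cases by the tightness of \cref{eq:zs}. When $z+\xvar<1$, the scaling perturbation $(\xvar_i^{\epsilon},u_i^{\epsilon},y_i^{\epsilon},z_i^{\epsilon})=\alphep_i(\xvar,u,y,z)$ with $t_i^{\epsilon}=\alphep_i t+(1-\alphep_i)\ogamma$ (the Case (b) direction of \cref{uless}) works: it scales both sides of \cref{c3:lin} by $\alphep_i>0$ and so preserves it, it preserves \cref{eq:us1} since $u_i^{\epsilon}-\ugamma\xvar_i^{\epsilon}=\alphep_i(u-\ugamma\xvar)\geq0$, and the remaining inequalities scale in the same way while \cref{eq:zs} stays slack for small $\epsilon$. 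When $z+\xvar=1$, the direction must keep $z+\xvar=1$ and be tangent to every other tight constraint among \cref{eq:yu,eq:yz2,eq:us1,eq:ust2,c3:lin}. Here I would exploit two facts: \cref{c3:lin} and \cref{c3:nl1} cannot both be tight while $u<\xvar t$ (combining them yields $u=\xvar t$); and, by \cref{c3:nlvalid}, the four equalities \cref{eq:yu,eq:zs,eq:yz2,eq:us1} can never hold simultaneously in $R^3$, since they would force \cref{c3:nl1} tight and then collapse \cref{c3:lin} to the false statement $t\leq\ugamma$. Thus at least one of \cref{eq:yu,eq:yz2,eq:us1} remains strict, supplying the slack needed to move through $p$ along a nonzero direction lying in the common null space of the genuinely tight constraints and tangent to \cref{c3:lin}.

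The hard part will be this last, fully binding situation: showing that when \cref{eq:zs} and several of \cref{eq:yu,eq:yz2,eq:us1,eq:ust2,c3:lin} are simultaneously tight there is still a nontrivial feasible direction through $p$, and that moving along it keeps \cref{c3:lin} satisfied—tangentially when \cref{c3:lin} is tight, and by \cref{lem:convergence} when it is strict. Because of the sign of $\ugamma$, the bookkeeping of which inequalities are tight and the explicit signed coefficients of each $p_i^{\epsilon}$ must be redone from scratch rather than inherited from \cref{uless}; I expect this case split, and the verification of \cref{c3:lin} along each direction, to be the bulk of the proof.
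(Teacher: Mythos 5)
Your overall strategy is the paper's: show that every defining constraint other than \cref{eq:yu,eq:zs,eq:yz2,eq:us1,c3:lin} is strict at $p$, then exhibit perturbation directions through $p$ that stay in $R^3$. Your structural observations are correct and coincide with the paper's: $\ugamma>0$ forces $u\geq\ugamma\xvar>0$, hence $y<0\leq\oGamma z$ and \cref{eq:yz1} is strict at every point of $R^3$, so the troublesome $y=\oGamma z$ subcase of \cref{uless} disappears; and \cref{eq:yu,eq:zs,eq:yz2,eq:us1} cannot all be tight, which is exactly what the paper proves (via \cref{c3:nlvalid}) to get strictness of \cref{eq:us1} in its fully binding case. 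Your $z+\xvar<1$ direction is the paper's case (b) verbatim.

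There are, however, two gaps. First, you never establish that \cref{eq:ust2} is strict at $p$. The paper gets this immediately from \cref{c3:lin}: $\ogamma\xvar-u\leq\frac{-\uGamma}{\ugamma-\uGamma}(\ogamma-t)<\ogamma-t$ because $\ugamma-\uGamma>-\uGamma>0$ and $\ogamma-t>0$. Without it, your list of potentially tight constraints when $z+\xvar=1$ contains five members (\cref{eq:zs}, two of \cref{eq:yu,eq:yz2,eq:us1}, \cref{eq:ust2}, and \cref{c3:lin}) in five variables, and the null-space argument you sketch does not by itself guarantee a nonzero feasible direction. Second, and more substantially, you explicitly defer the construction and verification of the directions in the binding cases, which is the bulk of the lemma's content. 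The paper handles these with three explicit families: for $y+u<0$ it scales $\xvar,u$ by $\alphep_i$ while moving $y$ toward $\uGamma$, $z$ toward $1$, and $t$ toward $\ogamma$; for $y>\uGamma z$ it scales $\xvar,u,y$ and moves $z$ toward $1$ and $t$ toward $\ogamma$; and in the fully tight case $y+u=0$, $z+\xvar=1$, $y=\uGamma z$ it uses the shift direction $(-1,\,-\uGamma,\,\uGamma,\,1,\,(\ugamma-\uGamma)(\ogamma-\uGamma)/(-\uGamma))\,\delta_i$, whose $t$-component is chosen precisely so that \cref{c3:lin} is preserved. Producing these signed coefficients and checking \cref{c3:lin} along each of them is the actual work; as written, your proposal is a sound roadmap rather than a proof.
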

\begin{proof}
  This proof has the same structure as the proof of \cref{uless}.
  By \cref{ustslack}, $p$ satisfies \cref{eq:us2,eq:ust1,eq:nl1} with
  strict inequality. Also, as $u \geq \ugamma \xvar > 0$, \cref{eq:yu}
  implies that $y < 0 \leq \oGamma z$, and hence $p$ satisfies
  \cref{eq:yz1} with strict inequality. In addition, by
  \cref{c3:lin},
  \[ \ogamma \xvar - u \leq \frac{-\uGamma}{\ugamma - \uGamma}(\ogamma -
  t) < \ogamma - t \] as $\ogamma - t > 0$ and $\ugamma - \uGamma >
  -\uGamma$ because $\ugamma > 0$, and so $p$ satisfies
  \cref{eq:ust2} with strict inequality. It remains to show that the
  points $p_i^{\epsilon}$ satisfy \cref{eq:yu,eq:zs,eq:yz2,eq:us1,c3:lin} for $\epsilon$ small enough.
  We consider four cases.

  \proofcase{a}{$y+u < 0$}
  For $\epsilon > 0$, define
  $p_i^{\epsilon} = (\xvar_i^{\epsilon}, u_i^{\epsilon}, y_i^\epsilon, z_i^\epsilon, t_i^{\epsilon})$
  where, for $i=1,2$,
  \begin{alignat*}{5}
    \xvar_i^{\epsilon} & \eqdef \alphep_i \xvar, \ \ &
    u_i^{\epsilon} & \eqdef \alphep_i u, \ \ &
    y_i^{\epsilon} & \eqdef (1-\alphep_i)\uGamma + \alphep_i y, \ \ \\
    z_i^{\epsilon} & \eqdef (1-\alphep_i) + \alphep_i z, \ \ &
    t_i^{\epsilon} & \eqdef (1-\alphep_i) \ogamma + \alphep_i t .
  \end{alignat*}
  \begin{description}[nosep,leftmargin=2.5em]
  \item[\cref{eq:yu}] Satisfied strictly by $p$ by the assumption of
    this case.
  \item[\cref{eq:zs}, \cref{eq:yz2}, \cref{eq:us1}] Easily checked directly.
  \item[\cref{c3:lin}] Shown directly by
    \begin{align*}
      (\ugamma - \uGamma)(\ogamma \xvar_i^{\epsilon} - u_i^{\epsilon})  &= \alphep_i (\ugamma - \uGamma)(\ogamma \xvar - u) \\
      & \leq \alphep_i(-\uGamma)(\ogamma - t)
       = -\uGamma ( \alphep_i \ogamma - t_i^{\epsilon} +
      (1-\alphep_i)\ogamma) = -\uGamma (\ogamma - t_i^{\epsilon}) .
    \end{align*}
  \end{description}

  \proofcase{b}{$z+\xvar < 1$}
  For $\epsilon > 0$, define
  $p_i^{\epsilon} = (\xvar_i^{\epsilon}, u_i^{\epsilon}, y_i^\epsilon, z_i^\epsilon, t_i^{\epsilon})$
  where, for $i=1,2$,
  \begin{alignat*}{5}
    \xvar_i^{\epsilon} & \eqdef \alphep_i \xvar, \ \ &
    u_i^{\epsilon} & \eqdef \alphep_i u, \ \ &
    y_i^{\epsilon} & \eqdef \alphep_i y, \ \ &
    z_i^{\epsilon} & \eqdef \alphep_i z, \ \ &
    t_i^{\epsilon} & \eqdef (1-\alphep_i) \ogamma + \alphep_i t .
  \end{alignat*}
  Then, $p_i^{\epsilon}$ is easily seen to satisfy
  \cref{eq:yu,eq:yz2,eq:us1} for any $\epsilon
  \in (0,1)$. \cref{eq:zs} is satisfied strictly by the assumption of
  this case. In addition, as the definitions of $u_i^{\epsilon}$,
  $\xvar_i^{\epsilon}$, and $t_i^{\epsilon}$ are the same as in Case (a),
  \cref{c3:lin} is satisfied by $p_i^\epsilon$ for $i=1,2$ and any
  $\epsilon \in (0,1)$.

  \proofcase{c}{$y > \uGamma z$}
  For $\epsilon > 0$, define
  $p_i^{\epsilon} = (\xvar_i^{\epsilon}, u_i^{\epsilon}, y_i^\epsilon, z_i^\epsilon, t_i^{\epsilon})$
  where, for $i=1,2$,
  \begin{alignat*}{5}
    \xvar_i^{\epsilon} & \eqdef \alphep_i \xvar, \ \ &
    u_i^{\epsilon} & \eqdef \alphep_i u, \ \ &
    y_i^{\epsilon} & \eqdef \alphep_i y, \ \ &
    z_i^{\epsilon} & \eqdef (1-\alphep_i) + \alphep_i z, \ \ &
    t_i^{\epsilon} & \eqdef (1-\alphep_i) \ogamma + \alphep_i t.
  \end{alignat*}
  Then, $p_i^{\epsilon}$ is
  easily seen to satisfy \cref{eq:yu,eq:zs,eq:us1} for any $\epsilon
  \in (0,1)$. \cref{eq:yz2} is satisfied strictly by the assumption
  of this case. In addition, as the definitions of $u_i^{\epsilon}$,
  $\xvar_i^{\epsilon}$, and $t_i^{\epsilon}$ are the same as in Case (a),
  \cref{c3:lin} is satisfied by $p_i^\epsilon$ for $i=1,2$ and any
  $\epsilon \in (0,1)$.

  \proofcase{d}{$y+u=0$, $z+\xvar=1$, and $y=\uGamma z$}
  For $\epsilon > 0$, define
  $p_i^{\epsilon} = (\xvar_i^{\epsilon}, u_i^{\epsilon}, y_i^\epsilon, z_i^\epsilon, t_i^{\epsilon})$
  where, for $i=1,2$,
  \begin{alignat*}{5}
    \xvar_i^{\epsilon} & \eqdef \xvar- \delta^{\epsilon}_i, \ \ &
    u_i^{\epsilon} & \eqdef u - \uGamma \delta_i, \ \ &
    y_i^{\epsilon} & \eqdef y + \uGamma \delta_i, \ \ &
    z_i^{\epsilon} & \eqdef z + \delta_i, \ \ &
    t_i^{\epsilon} & \eqdef t + \delta_i \frac{(\ugamma - \uGamma)(\ogamma - \uGamma)}{(-\uGamma)} .
  \end{alignat*}
  \begin{description}[nosep,leftmargin=2.5em]
  \item[\cref{eq:yu}, \cref{eq:zs}, \cref{eq:yz2}] Easily checked directly.
  \item[\cref{eq:us1}] We show that \cref{eq:us1} is satisfied
    strictly by $p$. Indeed, suppose to the contrary that $\ugamma \xvar-
    u = 0$. Then, by \cref{c3:nlvalid}, $(\ugamma - \uGamma)\xvar=
    -\uGamma$. Thus, using $u < \xvar t$, $\ugamma > 0$ and $-\uGamma > 0$,
    \[ (\ugamma - \uGamma)(\ogamma \xvar- u) > (\ugamma -
    \uGamma)(\ogamma \xvar- \xvar t) = -\uGamma (\ogamma - t) \] and hence
    \cref{c3:lin} is violated. Thus, \cref{eq:us1}
    is satisfied strictly by $p$.
  \item[\cref{c3:lin}] Since $u>\xvar t$ and \cref{c3:nlvalid} we
    show the validity of \cref{c3:lin} by
    \begin{align*}
      (\ugamma - \uGamma)(\ogamma \xvar_i^{\epsilon} - u_i^{\epsilon}) & =
      (\ugamma - \uGamma)\bigl(\ogamma (\xvar - \delta_i) - (u - \uGamma
      \delta_i)\bigr) \\
      &= (\ugamma - \uGamma)(\ogamma \xvar - u) - \delta_i(\ugamma - \uGamma)(\ogamma - \uGamma) \\
      &< (\ugamma - \uGamma)\xvar(\ogamma - t) - \delta_i(\ugamma - \uGamma)(\ogamma - \uGamma) \\
      &\leq -\uGamma(\ogamma - t) - \delta_i(\ugamma - \uGamma)(\ogamma - \uGamma) \\
      &= -\uGamma\Bigl(\ogamma - t_i + \delta_i \frac{(\ugamma - \uGamma)(\ogamma - \uGamma)}{(-\uGamma)}\Bigr) -
		\delta_i(\ugamma - \uGamma)(\ogamma - \uGamma) \\
      &= -\uGamma(\ogamma - t_i) .
    \end{align*}
  \end{description}
\end{proof}

\begin{lemma}
  \label{c3:umore}
  Suppose $0 < \ugamma < \ogamma$ and $\uGamma < 0 < \oGamma$. Let $p = (\xvar,u,y,z,t) \in R^3$ with $0 < \xvar < 1$ and $\ugamma < t < \ogamma$. If $u > \xvar t$, then $p$ is not an extreme point of $R^3$.
\end{lemma}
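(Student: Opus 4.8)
The plan is to follow the template of \cref{uless} and \cref{c3:uless}: for a candidate extreme point $p=(\xvar,u,y,z,t)\in R^3$ with $0<\xvar<1$, $\ugamma<t<\ogamma$, and $u>\xvar t$, I would construct two points $p_1^\epsilon,p_2^\epsilon$ with $p=\tfrac12(p_1^\epsilon+p_2^\epsilon)$ and $p_i^\epsilon\to p$, both lying in $R^3$ for small $\epsilon>0$, contradicting extremality. First I would record which inequalities defining $R^3$ hold \emph{strictly} at $p$, since \cref{lem:convergence} then handles them for free. By \cref{ustslack}, $u>\xvar t$ makes \cref{eq:us1,eq:ust2} strict. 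Because $\ugamma>0$, \cref{eq:us1} forces $u\ge\ugamma\xvar>0$, so \cref{eq:yu} gives $y\le-u<0\le\oGamma z$ and \cref{eq:yz1} is strict. Finally, combining $u>\xvar t$ with the bound $(\ugamma-\uGamma)\xvar\le-\uGamma$ from \cref{c3:nlvalid} shows $(\ugamma-\uGamma)(\ogamma\xvar-u)<(\ugamma-\uGamma)\xvar(\ogamma-t)\le-\uGamma(\ogamma-t)$, so \cref{c3:lin} is strict. This leaves only \cref{eq:yu,eq:zs,eq:yz2,eq:us2,eq:ust1,eq:nl1} to be verified directly on the perturbed points.

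For the perturbation I would scale $\xvar$ and $u$ and push $t$ toward $\ugamma$, setting $\xvar_i^\epsilon=\alphep_i\xvar$, $u_i^\epsilon=\alphep_i u$, and $t_i^\epsilon=(1-\alphep_i)\ugamma+\alphep_i t$. This direction is tailored to the three constraints living in $(\xvar,u,t)$: \cref{eq:us2} becomes $\alphep_i(\ogamma\xvar-u)\ge0$, \cref{eq:ust1} becomes $\alphep_i(u-\ugamma\xvar)\le\alphep_i(t-\ugamma)=t_i^\epsilon-\ugamma$, and, crucially, \cref{eq:nl1} is preserved because both of its sides are homogeneous of degree two along this ray: the left-hand side scales to $(\alphep_i)^2(u-\uGamma\xvar)(u-\ugamma\xvar)$ and the right-hand side to $-\uGamma(\alphep_i)^2\xvar(t-\ugamma)$, so \cref{eq:nl1} at $p_i^\epsilon$ is equivalent to \cref{eq:nl1} at $p$. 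The $y$ and $z$ components are then chosen case by case to keep the two tight linear constraints intact while deferring the slack one to \cref{lem:convergence}: in case (a), $y+u<0$, I would use $z_i^\epsilon=(1-\alphep_i)+\alphep_i z$ and $y_i^\epsilon=(1-\alphep_i)\uGamma+\alphep_i y$, so that \cref{eq:zs,eq:yz2} reduce to $\alphep_i(z+\xvar)\le\alphep_i$ and $\alphep_i(y-\uGamma z)\ge0$; in case (b), $z+\xvar<1$, the pure scaling $y_i^\epsilon=\alphep_i y$, $z_i^\epsilon=\alphep_i z$ handles \cref{eq:yu,eq:yz2}; and in case (c), $y>\uGamma z$, the choice $y_i^\epsilon=\alphep_i y$, $z_i^\epsilon=(1-\alphep_i)+\alphep_i z$ handles \cref{eq:yu,eq:zs}.

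The step I expect to be the crux, and the one genuinely new feature relative to \cref{c3:uless}, is establishing that these three cases are exhaustive, i.e.\ that the ``all tight'' configuration (the case (d) that appears in \cref{uless} and \cref{c3:uless}) cannot occur when $u>\xvar t$. I would dispatch this as a preliminary observation: if \cref{eq:yu,eq:zs,eq:yz2} all hold with equality, then $u=-y=-\uGamma z=-\uGamma(1-\xvar)$, hence $u-\uGamma\xvar=-\uGamma$; substituting this into \cref{eq:nl1}, which $p$ satisfies because $p\in R^3$, and dividing by $-\uGamma>0$ yields $u-\ugamma\xvar\le\xvar(t-\ugamma)$, that is $u\le\xvar t$, contradicting $u>\xvar t$. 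Thus at least one of \cref{eq:yu,eq:zs,eq:yz2} is slack, cases (a)--(c) cover every point, and no analogue of case (d) is required. With that observation in hand, the remaining work is the routine verification, in each case, that the strict inequalities carry over via \cref{lem:convergence} and that the directly checked inequalities hold for $\epsilon\in(0,1)$, exactly as in the proof of \cref{c3:uless}.
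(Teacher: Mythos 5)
Your proof is correct, but it takes a more self-contained route than the paper. The paper's own proof of \cref{c3:umore} is a two-line reduction: it establishes exactly the strictness of \cref{c3:lin} that you derive (via $u > \xvar t$ and \cref{c3:nlvalid}), notes that $R^3$ differs from $R^1$ only by the substitution of \cref{c3:lin} for \cref{eq:nl2}, and then invokes \cref{umore} wholesale --- which is precisely why \cref{umore} was stated with the hypothesis ``$y \leq 0$ or \cref{eq:nl2} strict'' rather than with $\ugamma < 0$. You instead rebuild the perturbation argument from scratch, and your case decomposition is genuinely different: three overlapping cases keyed to which of \cref{eq:yu,eq:zs,eq:yz2} is slack, plus a preliminary impossibility argument showing the all-tight configuration forces $u - \uGamma\xvar = -\uGamma$ and hence, via \cref{eq:nl1}, $u \leq \xvar t$. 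That observation is present in the paper too, but deployed differently --- it appears inside Cases (c) and (d) of \cref{umore} to show \cref{eq:yz2} is strict there, rather than up front to eliminate a case. The paper's reduction buys brevity and avoids re-verifying the $(\xvar,u,t)$-constraints; your version buys a shorter case list (three cases versus four), independence from the statement and proof of \cref{umore}, and makes explicit that \cref{eq:yz1} is strict throughout because $\ugamma > 0$ forces $y \leq -u < 0 \leq \oGamma z$. All the individual verifications you sketch (the degree-two homogeneity of \cref{eq:nl1} along the ray $\xvar_i^{\epsilon} = \alphep_i\xvar$, $u_i^{\epsilon} = \alphep_i u$, $t_i^{\epsilon} = (1-\alphep_i)\ugamma + \alphep_i t$, and the direct checks of \cref{eq:yu,eq:zs,eq:yz2} in each case) are sound, and together with the four constraints handled by \cref{lem:convergence} they account for all ten inequalities defining $R^3$.
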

\begin{proof}
  Using $\ugamma - \uGamma > 0$, we have
  \[ (\ugamma - \uGamma)(\xvar \ogamma - u) < (\ugamma - \uGamma)\xvar(\ogamma - t) \leq -\uGamma (\ogamma - t) \]
  by \cref{c3:nl1} in \cref{c3:nlvalid}.

  When $u > \xvar t$, the inequality \cref{c3:lin} is satisfied with strict
  inequality, just as \cref{eq:nl2} is satisfied by strict inequality
  when $u > \xvar t$ and $\ugamma < 0$ as in Case 1. As the substitution of
  \cref{c3:lin} for \cref{eq:nl2} is the only difference between the
  sets $R^3$ and $R^1$, \cref{umore} applies directly to this case, and
  we can conclude that if $u > \xvar t$, $0 < \xvar < 1$, and $\ugamma < t <
  \ogamma$, then $p$ is not an extreme point of $R^3$.
\end{proof}

%%% Local Variables:
%%% mode: latex
%%% TeX-master: "paper"
%%% End:

\section{Computational Results}\label{sec:computational_results}
In this section, we present results from computational
experiments conducted on instances from the literature and on larger randomly generated
instances derived from the instances in the literature. We show that the proposed inequalities indeed strengthen the
relaxation of the \pqformulation and are able to speed up the global
solution process, especially on sparse instances.

\subsection{Computational setup}

The experiments were conducted on a cluster with 64bit Intel Xeon
X5672 CPUs at \SI{3.2}{\giga\hertz} with \SI{12}{\mega\byte} cache and
\SI{48}{\mega\byte} main memory. To limit the impact of variability in machine performance, \eg by cache
misses, we run only one job on each node at a time.

The model is implemented in the GAMS language and processed with GAMS
version 24.7.1. The \pqformulation is solved to global optimality with
SCIP version 3.2 which used CPLEX 12.6.3 as LP solver and Ipopt 3.12
as local NLP solver. The relaxations, which are LPs or SOCPs, are
solved with CPLEX 12.6.3. We used the predefined timelimit of 1000
seconds and use a relative gap of $10^{-6}$ as termination criterion
(GAMS options $\text{\texttt{OPTCA}} = 0.0$ and $\text{\texttt{OPTCR}}
= 10^{-6}$).

\subsection{Adding the inequalities}

Recall that the initial step to construct the 5-variable relaxation
was to focus on a (Attribute, Pool, Output) tuple and extend the model by
the aggregated variables $u, \xvar, z, y, t$ for each such pair. We follow this approach in the implementation. We extend the model by
the aggregated variables and rely on the solver to replace or
disaggregate the variables in the constraints if it is considered
advantageous. We add the linear inequalities whenever they are valid
(specifically \cref{eq:beast_case2} is added whenever $\oGamma > 0$
and \cref{eq:beast_case3} is added whenever $\uGamma < 0$). Inequality
\cref{eq:ineq1} is second-order cone (SOC) representable and could in
principle be added directly as SOC constraint. However, 
we are not able to directly formulate \cref{yikes} using a linear or
second-order cone representation and we thus resort to a cutting
plane algorithm. Namely, whenever the relaxation solution has $y>0$
for a specific (Attribute, Pool, Output), a gradient inequality at this
point is separated and the relaxation is solved again. Note that the
gradient inequality is also valid for $y\leq 0$ due to
\cref{thm:convex_extension}. Since the gradient inequalities towards
the end of the separation loop become almost parallel, the interior
point SOCP solver frequently runs into numerical trouble. To
circumvent this, \cref{eq:ineq1} is not added directly to the model,
but linear gradient inequalities are also separated from all conic
inequalities in the same separation loop. The major advantage is that
all relaxations are then LPs and thus solved very efficiently. This
approach in our experience provides much better running times than
solving SOCP relaxations in the separation loop. 

We separate the inequalities only at the root node of the spatial
\branchAndBound algorithm. More precisely, we set up the separation
loop for both inequalities and separate until the absolute violation
of the conic inequality and inequality \eqref{yikes} are below $10^{-4}$
and $10^{-5}$, respectively. Then we pass the \pqformulation and all
inequalities that have been separated to \Scip and solve the problem
globally. The separation therefore does not make use of any model changes or
strengthening that \Scip performs during preprocessing or from propagations during
its own cutting plane loop.

In the following we use \pqrelaxation to refer to the McCormick
relaxation of the \pqformulation. The relaxation that arises by
strengthening the \pqrelaxation with our valid inequalities is called
\CJJJrelaxation.

\subsection{Instances}
\label{sec:instances}

We perform experiments on two sets of instances: The pooling instances
from the GAMSLIB~\cite{gamslib} and new instances that we randomly generated based on structures from the GAMSLIB
instances. The
GAMSLIB instances are encoded in the \texttt{pool} model as different
cases yielding  14 instances. All of them were first presented in
scientific publications about the pooling problem. It comprises three 
instances on the original network from
Haverly~\cite{haverly:78,haverly:79}. Furthermore, it contains
instances from the
publications~\cite{foulds.haugland.jornsten:92,ben-tal.eiger.gershovitz:94,adhya.tawarmalani.sahinidis:99:lagrangian_for_pooling,audet.et.al:04}.

The random instances are generated in the following way. The basis are
copies of the Haverly instances. The resulting disconnected graphs are
then supplemented by randomly adding a specific number of admissible
edges in a pooling network. As the resulting network might still be
disconnected, the first edges are chosen as to connect two
disconnected components until the graph is connected.
As the GAMSLIB includes three instances of the Haverly network with different
parameters, the distribution among the three Haverly instances is
sampled randomly. Next, for each copy a factor $\phi \in [0.5, 2]$ is
sampled uniformly and all concentration parameters, \ie
$\qualityAtInp$ and $\qualityUBAtOutp$ of that copy are scaled by
$\phi$. Lower bounds on the concentration are not used in these
instances but could be sampled and handled in the separation in a
similar way.

We generated instances with 10, 15, and 20 copies of the Haverly
network. The number of edges to be added are multiples of the number of
copies of the Haverly network. For each such pair of number of copies
and number of additional edges, we sample 10 instances. In total 180
instances are generated.

The new instances and the scripts to create them are available
online\footnote{\url{https://github.com/poolinginstances/poolinginstances},
  commit \texttt{e50a2c31ceed}}.

\subsection{Results}

\newcommand{\resultsfolder}{../results_random_haverly_7}

\begin{table}
  \centering
  \resizebox{\textwidth}{!}{
    \begin{tabular}{lrrrrrrrr}
\toprule
Instance & \multicolumn{2}{c}{Graph} & \multicolumn{2}{c}{\PQ} & \multicolumn{3}{c}{\CJJJ} & Opt\\
\cmidrule(lr){2-3}\cmidrule(lr){4-5}\cmidrule(lr){6-8}
 & \multicolumn{1}{c}{Nodes} & \multicolumn{1}{c}{Arcs} & \multicolumn{1}{c}{Absolute} & \multicolumn{1}{c}{Gap} & \multicolumn{1}{c}{Absolute} & \multicolumn{1}{c}{Gap} & \multicolumn{1}{c}{Closed} & \\
\midrule
 adhya1 & 11 & 13 & -766.3 & 39.4 \% & -697.0 & 26.8 \% & 32.0 \% & -549.8\\
 adhya2 & 11 & 13 & -570.8 & 3.8 \% & -568.3 & 3.4 \% & 11.8 \% & -549.8\\
 adhya3 & 15 & 20 & -571.3 & 1.8 \% & -570.7 & 1.7 \% & 6.3 \% & -561.0\\
 adhya4 & 15 & 18 & -961.2 & 9.5 \% & -955.4 & 8.9 \% & 7.0 \% & -877.6\\
 bental4 & 7 & 4 & -550.0 & 22.2 \% & -450.0 & 0.0 \% & 100.0 \% & -450.0\\
 haverly1 & 6 & 6 & -500.0 & 25.0 \% & -400.0 & 0.0 \% & 100.0 \% & -400.0\\
 haverly2 & 6 & 6 & -1000.0 & 66.7 \% & -600.0 & 0.0 \% & 100.0 \% & -600.0\\
 haverly3 & 6 & 6 & -800.0 & 6.7 \% & -791.7 & 5.6 \% & 16.7 \% & -750.0\\
\bottomrule
\end{tabular}

  }
  \caption{Results on GAMSLIB instances where the \pqformulation does
    not provide the optimum}
  \label{tab:gamslib}
\end{table}

First, we consider the 14 GAMSLIB instances. For six instances the
\pqrelaxation provides the optimal bound and hence these instances are
not considered anymore. \Cref{tab:gamslib} shows results on the
remaining eight GAMSLIB instances. Along with the size of the graph in terms
of number of nodes and arcs, \cref{tab:gamslib} presents the value of
the different relaxations and their gaps. The column ``Opt'' shows the
global optimum computed by solving the \nonconvex \pqformulation. The
instances are small, but the results are encouraging. Our
relaxation gives a stronger dual bound on all instances compared to
the \pqrelaxation and on three instances the gap is closed completely.

% \begin{table}
%   \centering
%   \resizebox{\textwidth}{!}{
%     \input{tables/random_haverly}
%   }
%   \caption{Results on randomly generated instances with 1 and 2 attributes.}
%   \label{tab:random_haverly}
% \end{table}

\begin{table}
  \centering
  \resizebox{\textwidth}{!}{
    \begin{tabular}{rrrrrrrrrrrr}
\toprule
Cop.  & \multicolumn{3}{c}{Graph} & \multicolumn{2}{c}{Gap [\%]} & \multicolumn{3}{c}{Global \PQ} & \multicolumn{3}{c}{Global \CJJJ}\\
\cmidrule(lr){2-4}\cmidrule(lr){5-6}\cmidrule(lr){7-9}\cmidrule(lr){10-12}
 & \multicolumn{1}{c}{$\card{\nodes}$} & \multicolumn{1}{c}{$\card{\arcs}$} & \multicolumn{1}{c}{$\card{\arcs^+}$} & \multicolumn{1}{c}{\PQ} & \multicolumn{1}{c}{\CJJJ} & \multicolumn{1}{c}{TL} & \multicolumn{1}{c}{Time} & \multicolumn{1}{c}{Nodes} & \multicolumn{1}{c}{TL} & \multicolumn{1}{c}{Time} & \multicolumn{1}{c}{Nodes}\\
\midrule
10 & 60 & 70 & 10 & 13.0 & 3.2 & 0 & 5.0 & 7098.1 & 0 & 1.8 & 213.9\\
 & 60 & 80 & 20 & 8.3 & 3.8 & 0 & 3.4 & 3011.6 & 0 & 3.4 & 727.0\\
 & 60 & 90 & 30 & 4.7 & 2.9 & 0 & 3.1 & 1397.2 & 0 & 3.2 & 371.4\\
 & 60 & 100 & 40 & 3.0 & 1.9 & 0 & 2.3 & 993.2 & 0 & 4.1 & 530.4\\
 & 60 & 110 & 50 & 2.6 & 2.1 & 0 & 3.3 & 1034.4 & 0 & 6.0 & 665.9\\
 & 60 & 120 & 60 & 3.3 & 2.4 & 0 & 6.3 & 2320.4 & 0 & 9.3 & 1511.6\\
15 & 90 & 105 & 15 & 10.6 & 3.2 & 0 & 63.0 & 106880.6 & 0 & 7.0 & 2023.2\\
 & 90 & 120 & 30 & 7.2 & 3.3 & 1 & 53.2 & 40031.5 & 1 & 20.0 & 3480.5\\
 & 90 & 135 & 45 & 4.9 & 3.4 & 1 & 36.6 & 24087.1 & 0 & 31.0 & 8202.0\\
 & 90 & 150 & 60 & 4.1 & 3.0 & 1 & 33.4 & 19234.3 & 0 & 24.0 & 5043.2\\
 & 90 & 165 & 75 & 3.3 & 2.5 & 0 & 21.2 & 13919.3 & 0 & 37.1 & 10576.1\\
 & 90 & 180 & 90 & 3.8 & 3.0 & 1 & 47.4 & 21998.2 & 1 & 51.8 & 8300.6\\
20 & 120 & 140 & 20 & 13.4 & 4.3 & 9 & 993.9 & 1655439.0 & 1 & 44.3 & 7327.0\\
 & 120 & 160 & 40 & 6.0 & 2.9 & 4 & 296.9 & 175642.9 & 3 & 116.8 & 18319.4\\
 & 120 & 180 & 60 & 4.5 & 2.8 & 5 & 287.6 & 84123.7 & 4 & 213.3 & 29497.7\\
 & 120 & 200 & 80 & 4.1 & 2.8 & 2 & 84.3 & 40476.0 & 2 & 68.5 & 12186.5\\
 & 120 & 220 & 100 & 3.1 & 2.3 & 3 & 159.1 & 44945.7 & 3 & 142.3 & 20325.7\\
 & 120 & 240 & 120 & 2.5 & 2.0 & 2 & 187.5 & 69610.8 & 3 & 224.1 & 36090.3\\
\midrule
\multicolumn{1}{l}{Total} & -- & -- & -- & 5.7 & 2.9 & 29 & 37.1 & 12685.7 & 18 & 25.0 & 3108.0\\
\bottomrule
\end{tabular}

  }
  \caption{Results on randomly generated instances}
  \label{tab:random_haverly_1}
\end{table}

% \begin{table}
%   \centering
%   \resizebox{\textwidth}{!}{
%     \input{tables/random_haverly_2}
%   }
%   \caption{Results on randomly generated instances with 2 attributes.}
%   \label{tab:random_haverly_2}
% \end{table}

% \begin{table}
%   \centering
%   \resizebox{\textwidth}{!}{
%     \input{tables/random_haverly_mixed}
%   }
%   \caption{Results on randomly generated instances.}
%   \label{tab:random_haverly_mixed}
% \end{table}

\Cref{tab:random_haverly_1} presents results on the larger randomly generated
networks. The instances are grouped by the number of copies of the
Haverly network (first column) and by the number of edges that have
been added to the network (column $\card{\arcs^+}$). Each row thus
provides aggregated results over 10 instances. The last row represents
the total over all instances. The group of columns labeled with
``Graph'' shows statistics about the graphs. Besides the number of
random arcs added $\card{\arcs^+}$, the number of nodes
$\card{\nodes}$ and arcs $\card{\arcs}$ is shown. The numbers are
identical within each group of instances. Next, the average gap for the
\pqrelaxation and the \CJJJrelaxation is shown. For both approaches
the gap is computed \wrt the best known primal bound for the problem
and thus reflects only differences in the dual bound. Finally, the
last two groups of columns show statistics about the global solution
process using the \pqformulation and \CJJJrelaxation at the root. We
report number of instances that were terminated due to the time limit (column
``TL''), time, and number of nodes. For time and nodes, the shifted
geometric mean with shift 2 and 100, respectively, is used to
aggregate the results. Furthermore, only instances where both
approaches finished within the time limit are considered in the
computation of the number of nodes. For \CJJJ, the separation time for the
nonlinear inequalities is taken into account by adding it to the time
SCIP needed to solve the problem.

The \CJJJrelaxation is effective in reducing the root gap,
leaving an average gap of \SI{2.9}{\%} compared to the \SI{5.7}{\%} of
the \pqrelaxation. The \CJJJrelaxation
performs especially well on instances with sparse networks. This is expected, since the relaxation
provides the optimal dual bound on two of the three Haverly instances
(\cf \cref{tab:gamslib}) that we used to construct the random
instances. All but one instance of the testset experience an
improvement of the dual bounds due to the additional inequalities. The
most notable effect of the stronger root bound is on the number of
branch\&bound nodes needed to solve an instances to global optimality.
The shifted geometric mean of the nodes is reduced from 12685 to
3108, a reduction of \SI{75}{\%} over the full set of instances. While reductions
are stronger on sparse instances, significant reductions
are observed among all classes of instances. In terms of time to
optimality, the stronger relaxation pays off only for sparse instances.
As the instances become denser, the \pqformulation achieves better
running times in the shifted geometric mean. Over all instances,
however, the shifted geometric mean is reduced from 37.1 to 25.0
seconds. 
A significant portion of this improvement comes from
instances with 20 Haverly networks and only 20 additional edges. 
From the 10 instances of this class, only one instance is solved within the
time limit (in 940 seconds) by the \pqformulation while all but one are
solved using the \CJJJrelaxation. The dual bound is exactly the
problem for the \pqformulation on these instances. The approach with the \pqformulation found
an optimal solution always within the first 184 seconds of the
optimization and then used a massive amount of branching nodes to
close the gap. Overall, \CJJJ solves 11 instances more within the
timelimit than the \pqformulation.

%%% Local Variables:
%%% mode: latex
%%% TeX-master: "paper"
%%% End:

\section{Conclusions}\label{sec:conclusions}

We have derived new valid inequalities for the pooling problem by studying a set defined by a single product, a single
pool, and a single attribute, and performing a variable aggregation. Since we have also shown these inequalities define
the convex hull in many cases, further improvements to the relaxation of the pooling problem will need to consider more
aspects of the problem. For example, still with a fixed attribute $k$, output $j$, and pool $\ell$, one may consider 
studying valid inequalities for a set in which the variables $x_{ij}$, $w_{i\ell j}$, and $q_{i\ell}$, for $i \in I$ are included, rather than being
summarized in the variables $z_{i\ell}$, $t_{k\ell}$ and $u_{k\ell j}$. Alternatively, one may still use these summary
variables, but study a set that includes multiple pools. The latter approach may yield further improved relaxations,
since it avoids the need to treat all flows to the fixed product that pass through pools other than the fixed pool as
by-pass flows.

%%% Local Variables:
%%% mode: latex
%%% TeX-master: "paper"
%%% End:

\section*{Acknowledgements}
The authors thank Stefan Vigerske for his help and suggestions on the
implementation of the separation procedure in the GAMS environment.
Jonas Schweiger thanks the DFG for their support within Projects B06
and Z01 in CRC TRR 154 and the Research Campus MODAL funded by the
German Federal Ministry of Education and Research (BMBF) (fund number
05M14ZAM). The work of Linderoth and Luedtke was supported by the U.S.
Department of Energy, Office of Science, Office of Advanced Scientific
Computing Research, Applied Mathematics program under Contract Number
DE-AC02-06CH11357.

\bibliographystyle{siamplain}
\bibliography{opt}

\end{document}